
\documentclass[11pt]{amsart}
\usepackage{amsfonts,amssymb,amsmath,amsthm,bbm}

\usepackage[english]{babel}

\def\a{\mathfrak{a}}

\newcommand{\RR}{{\bf R}}
\newcommand{\R}{{\bf R}}

\newcommand{\ZZ}{\mathbb{Z}}

\newtheorem{theorem}{Theorem}[section]
\numberwithin{equation}{section}
\newtheorem{definition}[theorem]{Definition}

\newtheorem{lemma}[theorem]{Lemma} 
\newtheorem{proposition}[theorem]{Proposition}
\newtheorem{corollary}[theorem]{Corollary}
\newtheorem{remark}[theorem]{Remark}
\newtheorem{conjecture}{Conjecture}

\newtheorem{example}[theorem]{Example}
\newtheorem{acknowledgment}{Acknowledgment}

\begin{document}

\title{Potential kernels for radial Dunkl Laplacians}

\keywords{potential kernel, Newton kernel, Dunkl setting, complex symmetric space, root system} 
\subjclass[2010]{17B22, 60J45}

\author{Piotr Graczyk}
\address{P.{} Graczyk, LAREMA, UFR Sciences, Universit\'e d'Angers, 2 bd Lavoisier, 49045 Angers cedex 01, France}
\email{graczyk@univ-angers.fr}

\author{Tomasz Luks}
\address{T.{} Luks, Institut f\"ur Mathematik, Universit\"at Paderborn, Warburger Strasse 100, D-33098 Paderborn, Germany}
\email{tluks@math.uni-paderborn.de}

\author{Patrice Sawyer}
\address{P.{} Sawyer, Department of Mathematics and Computer Science, Laurentian University, Sudbury, Canada}
\email{psawyer@laurentian.ca}



\begin{abstract}
We derive two-sided bounds for the Newton and Poisson kernels
of the $W$-invariant Dunkl {Laplacian} {in geometric complex case when} the multiplicity $k(\alpha)=1$ {i.e. for flat complex symmetric spaces}. For the invariant Dunkl-Poisson kernel $P^W(x,y)$, the estimates are
\begin{align*}
P^W(x,y)\asymp \frac{P^{\R^d}(x,y)}{\prod_{\alpha > 0 \ }|x-\sigma_\alpha y|^{2k(\alpha)}},
\end{align*}
where the $\alpha$'s are the positive roots of a root system acting in $\R^d$, the $\sigma_\alpha$'s are the corresponding symmetries and $P^{\R^d}$ is the classical Poisson kernel in ${\R^d}$.  Analogous bounds are proven for the Newton kernel when $d\ge 3$.

The same estimates are derived in the rank one direct product case $\ZZ_2^N$ and conjectured for general $W$-invariant Dunkl processes.

As an application, we get a two-sided bound for the Poisson and Newton kernels of the classical Dyson Brownian motion and of the Brownian motions in any Weyl chamber.
\end{abstract}

\maketitle
\selectlanguage{english}

\section{Introduction}

Dunkl differential-difference operators were discovered by Dunkl \cite{D} in the late 1980's as a crucial tool to study Calogero-Moser-Sutherland mechanical particle systems. Since then,
Dunkl analysis was intensely developed and its main achievements are gathered in \cite{DJ, DuX, Ro, RV}. Together with their trigonometric counterparts, which were introduced by I. Cherednik in 1991 \cite{C}, Dunkl operators provide an extension of the commuting families of differential operators associated to Riemannian symmetric spaces. They have become an essential tool in the modern analysis associated with root systems.

The main results of the paper are two-sided bounds for the Poisson and Newton  kernels (when $d\ge 3$) of 
$W$-invariant Dunkl Laplacians (also called radial or symmetrized Dunkl Laplacians),  in the geometric complex case when the multiplicity $k(\alpha)=1$ (i.e. for flat complex symmetric spaces) and in the rank one direct product case $\ZZ_2^N$ with any multiplicities. These estimates have an elegant form
\begin{align*}
 {\mathcal K}^W(x,y)\asymp \frac{{\mathcal K}^{\R^d}(x,y)}{\prod_{\alpha > 0 \ }|x-\sigma_\alpha y|^{2k(\alpha)}},
\end{align*}
 where ${\mathcal K}^{\R^d}$ is a classical kernel and ${\mathcal K}^W(x,y)$ its radial Dunkl counterpart. Analogous estimates are  conjectured for the general $W$-invariant Dunkl Laplacians.
 
 Precise estimates of Poisson and Newton kernels (as well as of the Green function) of local and non local operators have been of interest for many years and constitute an important part of the modern potential theory, see e.g. \cite{CheS, CrZ,  Ku, Wid, Z1, Z2} and references therein. In this context, the only case treated so far in the Dunkl setting is the root system of rank one, see \cite{GLR}. The main difficulty for general root systems is the lack of explicit formulas (or suitable estimates) for the intertwining operator (see Section \ref{sec:prelim} for its connection with potential kernels). Our methods for the geometric complex case rely on the so-called alternating sums (see Proposition \ref{altern}) and allow us to handle all root systems. In the rank one direct product case, we make use of available formulas for the intertwining operator, see Section \ref{sec:product}.
 
Our results contribute significantly to the further development of the potential theory of Dunkl operators. In particular, they will be essential to study the estimates of the Green function, not treated in this paper. The study of two-sided bounds of potential kernels has motivations and immediate applications in partial differential equations and implies estimates of solutions of important PDEs, in particular of Dunkl-harmonic functions (see e.g. \cite{Chr, Gallardo1, Gallardo2, GLR} for the latter).

Our paper is organized as follows. In Section \ref{sec:prelim}, we recall basic facts from Dunkl analysis and we formulate our main results. In Section \ref{sec:roots}, we prove some useful formulas involving roots. In Section \ref{Dunkl setting}, we present results on the Poisson kernel which hold without any restriction on the multiplicity function $k$. In Section \ref{sec:complex} and Section \ref{sec:Newton}, we prove our main results for the Poisson and Newton kernel, respectively, in the geometric complex case. In Section \ref{sec:product}, we discuss the rank one direct product case. The applications  to the important stochastic processes Dyson Brownian motions and stochastic particle systems are discussed in Section \ref{stochastic}.

\section{Preliminaries}\label{sec:prelim}
\subsection{Basic facts on root systems and Dunkl operators}

Let $\Phi$ be a root system in $\R^d$ (equipped with the usual scalar product and Euclidean norm $|\cdot|$), and let $W$ be the associated finite reflection
group, called Weyl group. We fix a non negative multiplicity function $k$ on $\Phi$, i.e. $k: \Phi\to [0, \infty)$ is $W$-invariant, and let $\Phi_+$ denote an (arbitrary) positive subsystem of $\Phi$.

The root system $\Phi$ is assumed to be crystallographic {(needed in Lemma \ref{XWY})} but $W$ is not required to be effective, i.e. $\text{span}_\R\, \Phi$ may be a proper subspace of $\R^d$. The dimension of $\text{span}_\R\,\Phi$ is called the rank of $\Phi$. 

An important example is 
\begin{align*}
\Phi=A_{r-1} = \{ \pm(e_i-e_j): 1\leq i < j \leq r\}\subset \R^d, \qquad d\ge r
\end{align*}
with $W=S_r$, the symmetric group in $r$ elements. 
Note that $A_{r-1}$ is also considered on $\R^{r-1}=\R^r\cap \{ x|\ \sum_{i=1}^r x_i=0 \}$.

We will assume that $|\alpha|\geq 1$ for all roots $\alpha$. We denote by 
$A_\alpha$ the root vectors, i.e. $\langle x,A_\alpha\rangle=\alpha(x)$ for all $x\in\a$. We denote $(\alpha_i)_{i=1,\ldots, r} $ a system of simple positive roots.

The (rational) Dunkl operators associated with $\Phi$ and $k$ are given by
\begin{align*}
T_\xi f(x) = \partial_\xi f(x) + \sum_{\alpha\in \Phi_+} 
 k(\alpha)\,\langle\alpha, \xi\rangle\, 
 \frac{f(x) - f(\sigma_\alpha x)}{\langle\alpha, x\rangle},
 \quad\xi\in \R^d
\end{align*}
where $\partial_\xi$ is the derivative in the direction of $\xi$.

For fixed $\Phi$ and $k$, these operators commute. Moreover, there is a unique linear isomorphism $V_k$ on the space of polynomial functions in $d$ variables, called the intertwining operator, which preserves the degree of homogeneity, is normalized by $V_k(1)=1$ and intertwines the Dunkl operators with the usual partial derivatives:
\begin{align*}
T_\xi V_k = V_k \partial_\xi \quad \text{ for all } \xi \in \R^d.
\end{align*}
The Dunkl Laplacian is defined by 
\begin{align*}
\Delta_k := \sum_{i=1}^ d T_{\xi_i}^2 
\end{align*}
with an arbitrary orthonormal basis $(\xi_i)_{ 1\leq i \leq d}$ of $\R^d$. 
Then, for $f\in {\mathcal C}^2( \R^d)$,
\begin{align*}
\Delta_k f(x)=\Delta f(x)+\sum_{\alpha\in \Phi_+}k(\alpha)\left(\frac{\langle\nabla f(x),\alpha\rangle}{\langle\alpha,x\rangle}-
\frac{|\alpha|^2}{2}\frac{f(x)-f(\sigma_\alpha x)}{\langle\alpha,x\rangle^2}\right),
\end{align*}
where $\Delta$ is the usual Laplacian on $\R^d$. 
The restriction of $\Delta_k$ to $W$-invariant functions $f\in {\mathcal C}^2_W( \R^d)$ is called the $W$-invariant (or radial)
Dunkl Laplacian and is denoted by $\Delta^W$. We have
\begin{align}\label{DeltaW}
\Delta^W f(x)=\Delta f(x)+\sum_{\alpha\in \Phi_+}k(\alpha)\frac{\langle\nabla f(x),\alpha\rangle}{\langle\alpha,x\rangle},\qquad f\in {\mathcal C}^2_W( \R^d).
\end{align}

For $x\in \R^d$, denote by $C(x)$ the convex hull of the Weyl group
orbit $W\cdot x$ of $x$ in $\R^d.$
The intertwining operator $V_k$ has the integral representation \cite{Ro}
\begin{align}\label{intertwiner}
V_kf(x)=\int_{C(x)}\, f(z)\, d\mu_x^k(z),
\end{align}
where $\mu_x^k$ is a probability measure on $C(x)$, called R\"osler measure, and $f$ is a ${\mathcal C}^1$ function. 
We put 
\begin{align*}
\kappa := \sum_{\alpha\in \Phi_+}\,k(\alpha)\,
\end{align*}
and define the weight function $\omega_k$ on $\R^d$ by
\begin{align*}
\omega_k(x):=\prod_{\alpha\in \Phi_{+}}\left|\langle\alpha,x\rangle\right|^{2k(\alpha)}.
\end{align*}

Let $\a^+$ be the positive Weyl chamber associated with $\Phi_+$. 
We denote $\pi(x)=\prod_{\alpha\in \Phi_{+}}\langle\alpha,x\rangle$.

\subsection{Invariant Poisson and Newton kernels}

\subsubsection{$W$-invariant Poisson kernel}

For the convenience of the reader, 
we recall here main facts about the $W$-invariant Poisson kernel $P^W(x,y)$ of the unit ball,
presented in \cite{PGPS}.

We denote by $B:=B(0,1)$ the unit ball in $\R^d$ and by $S:=\partial B$ the unit sphere. Denote $B^+:=B\cap\overline{\a^+}$ and $S^+:=S\cap\overline{\a^+}$.
First recall the definition of the Dunkl Poisson kernel.

The Dunkl Poisson kernel $P_k(x,y)$ of $B$ is the continuous function on $B\times S$ which solves the Dirichlet problem for the 
Dunkl Laplacian operator $\Delta_k$ {on $\overline B$}
 namely, for any $f\in {\mathcal C}(S)$,
 \begin{align*}
 u(x)=\int_S\,P_k(x,y)\,f(y)\,{\pi^2}(y)\,dy
 \end{align*}
 is a solution of the equation $\Delta_k\,u(x)=0$
{on $B$, with $u\in {\mathcal C}^2(B)\cap {\mathcal C}(\overline{B})$}
 and $u(y)=f(y)$ for $y\in S$.
The existence and uniqueness of the Dunkl Poisson kernel $P_k(x,y)$
was shown by Dunkl \cite{Du1} for spherical polynomials
$f$ and extended in \cite{MYoussfi} to any continuous $f$ on $S$.


The Dunkl Poisson kernel is given by Dunkl formula (refer to \cite{DuX})
\begin{align}\label{PoissonDunkl}
P_k(x,y)=\frac{2^{2\,\kappa}\,(d/2)_\kappa}{\pi(\rho)\,|W|\,w_d}\,V_k\left[\frac{1-|x|^2}{(1-2\langle x,\cdot\rangle +|x|^2)^{\kappa+
d/2}}\right](y),\ \ x\in B,~ y\in S.
\end{align}
Note that we are using a different normalization from \cite{DuX} for consistency between  \eqref{PoissonDunkl}, \eqref{PoissonComplexX} and Remark \ref{average}.

\begin{remark}\label{scalprod}
We will use repeatedly the fact that $\langle x,y\rangle\geq \langle x,w\,y\rangle$ whenever $x$, $y\in\overline{\a^+}$ and $w\in W$ (\cite[Theorem 2.12, Ch.{} VII]{Helgason1}).
\end{remark}

The following result is well known. We include the proof for convenience.
\begin{lemma}\label{projection}
The map $\pi_{\a^+}$ which sends an element $x\in\a$ to the unique element $x^+=w\,x\in\overline{\a^+}$ with $w\in W$ is Lipschitz.
\end{lemma}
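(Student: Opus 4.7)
The plan is to show that $\pi_{\a^+}$ is in fact $1$-Lipschitz, i.e. $|x^+ - y^+| \le |x-y|$ for all $x,y \in \a$, where $x^+ := \pi_{\a^+}(x)$ and $y^+ := \pi_{\a^+}(y)$. The whole argument rests on Remark \ref{scalprod}, which is precisely the tool that lets us compare the inner product of two chamber representatives to the inner product of their Weyl-group translates.

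First I would write $x = w_1^{-1} x^+$ and $y = w_2^{-1} y^+$ for suitable $w_1, w_2 \in W$. Because $W$ acts by orthogonal transformations, $|x| = |x^+|$ and $|y| = |y^+|$, so the squared-distance expansions give
\begin{align*}
|x-y|^2 - |x^+ - y^+|^2 = 2\bigl(\langle x^+, y^+\rangle - \langle x, y\rangle\bigr).
\end{align*}
Hence it suffices to establish the pointwise inequality $\langle x, y\rangle \le \langle x^+, y^+\rangle$.

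For this I would rewrite
\begin{align*}
\langle x, y\rangle = \langle w_1^{-1} x^+, w_2^{-1} y^+\rangle = \langle x^+, w_1 w_2^{-1} y^+\rangle.
\end{align*}
Since $x^+ \in \overline{\a^+}$ and $y^+ \in \overline{\a^+}$, Remark \ref{scalprod} (applied with the Weyl element $w_1 w_2^{-1} \in W$) immediately yields $\langle x^+, w_1 w_2^{-1} y^+\rangle \le \langle x^+, y^+\rangle$, which is exactly the desired inequality. Combining gives $|x^+ - y^+| \le |x-y|$ and finishes the proof.

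There is really no hard step here; the only point requiring care is the uniqueness of $x^+$, which ensures $\pi_{\a^+}$ is well-defined as a function of $x$, but this is part of the standard theory of Weyl chambers (a fundamental domain for $W$) and is already built into the statement. The Lipschitz constant $1$ falls out automatically, so no root-system geometry beyond Remark \ref{scalprod} is needed.
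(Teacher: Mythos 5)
Your proof is correct, and it is essentially the same argument as the paper's: reduce to the case where both points are brought to the closed positive chamber via $W$, observe that $|x-y|=|x^+ - w_1 w_2^{-1}y^+|$ by orthogonality, and invoke Remark \ref{scalprod} to conclude $|x^+ - w_1 w_2^{-1}y^+|\ge |x^+-y^+|$. You phrase the final step by comparing inner products directly rather than via the identity $|x-wy|^2=|x-y|^2+2\langle x,y-wy\rangle$, but this is only a cosmetic difference.
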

\begin{proof}
If $x$, $y\in \overline{\a^+}$ and $w\in W$ then $|x-w\,y|^2=|x-y|^2+2\,\langle x,y-w\,y\rangle\geq |x-y|^2$ since $\langle x,y-w\,y\rangle\geq0$. Hence, if $x^+=w_1\,x$ and $y^+=w_2\,y$ then 
\begin{align*}
|x-y|^2=|(w_1\,x)-w_1\,w_2^{-1}\,(w_2\,y)|^2=|x^+-w_1\,w_2^{-1}\,y^+|^2\geq |x^+-y^+|^2.
\end{align*}
\end{proof}

 \begin{definition}\label{def:PW}
 The $W$-invariant Poisson kernel $P^W(x,y)$ of $B$ is the $W\times W$ invariant continuous function on $B\times S$, which solves the Dirichlet problem for the operator $\Delta^W$ on $B$, where $\Delta^W$ is the radial Dunkl Laplacian \eqref{DeltaW}.
 Namely, for any $f\in {\mathcal C}_W(S)$,
 \begin{align*}
 u(x)=\int_S\,P^W(x,y)\,f(y)\,\pi^2(y)\,dy
 \end{align*}
is a solution of the equation $\Delta^W\,u(x)=0$ on $B$, with $u\in {\mathcal C}_W^2(B)\cap {\mathcal C}_W(\overline{B})$ and $u(y)=f(y)$ for $y\in S$.
 \end{definition} 
The next proposition will show that the Weyl-invariant Poisson kernel is uniquely determined by the above definition. 

\begin{remark}
The proof of the uniqueness of the symmetrized Poisson kernel carries through in the same manner in general.
\end{remark}

 \begin{proposition}\label{altern}
In the complex case, the Poisson kernel of the open unit ball $B$ is given for $x\in B$ and $y\in\partial B$ by
\begin{align}\label{PoissonComplexX}
P^W(x,y) =\frac{1-|x|^2}{|W|\,w_d\,\pi(x)\,\pi(y)} \,\sum_{w\in W}\,\frac{\epsilon(w)}{|x-w\cdot y|^{d}}.
\end{align}
\end{proposition}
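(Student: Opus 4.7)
My plan is to verify that the right-hand side, which I shall denote $Q(x,y)$, satisfies the three defining properties of $P^W$ in Definition~\ref{def:PW}, and then to invoke the uniqueness mentioned in the preceding remark. The crux is the special structure of the complex case: conjugation by $\pi$ intertwines $\Delta^W$ (acting on $W$-invariant functions) with the ordinary Laplacian $\Delta$ (acting on alternating functions), and each summand of the proposed formula is a classical Poisson kernel on $B$, so harmonicity is inherited automatically.

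\emph{Invariance, smoothness, and harmonicity.} Reindexing the sum $\sum_w \epsilon(w)|x-wy|^{-d}$ by $w\mapsto w_0 w$ (for $w_0\in W$, using that $w_0$ is a Euclidean isometry) shows that it transforms by $\epsilon(w_0)$ under $y\mapsto w_0 y$, matching $\pi(w_0 y)=\epsilon(w_0)\pi(y)$; the ratio $Q$ is therefore $W\times W$-invariant, and the smooth alternating numerator is divisible by each of $\pi(x)$ and $\pi(y)$ with smooth $W$-invariant quotient (the smooth analogue of Chevalley/Steinberg), so $Q$ extends smoothly to $B\times S$. For fixed $y\in S$, the numerator
\begin{align*}
N(x,y):=\sum_{w\in W}\epsilon(w)\,\frac{1-|x|^2}{|x-wy|^d}
\end{align*}
is a linear combination of classical Poisson kernels centered at the points $wy\in S$, hence $\Delta_x N(x,y)=0$. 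Writing $N=\pi(x)\,g(x,y)$ with $g$ smooth and $W$-invariant in $x$, and using $\Delta\pi=0$ (Steinberg) together with $\nabla\pi/\pi=\sum_{\alpha>0}\alpha/\langle\alpha,x\rangle$, the expansion of $0=\Delta(\pi g)$ reduces exactly to $\Delta^W g=0$ in the complex case, so $\Delta^W_x Q(x,y)=0$ on $B$.

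\emph{Boundary values.} For $f\in\mathcal{C}_W(S)$ set $u_f(x):=\int_S Q(x,y)\,f(y)\,\pi^2(y)\,dy$. Unfolding the alternating sum and substituting $y\mapsto w^{-1}y$ in each of the $|W|$ resulting integrals (using $W$-invariance of $f$ and of surface measure, $\pi(w^{-1}y)=\epsilon(w^{-1})\pi(y)$, and $\epsilon(w)\epsilon(w^{-1})=1$), all terms collapse to one and I obtain
\begin{align*}
\pi(x)\,u_f(x)=\int_S\frac{(1-|x|^2)\,f(y)\,\pi(y)}{w_d\,|x-y|^d}\,dy,
\end{align*}
the classical Poisson integral in $B$ of the continuous function $f\cdot\pi$ on $S$. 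This extends continuously to $\overline B$ with boundary values $f(y)\pi(y)$; being alternating in $x$, it factors as $\pi(x)h(x)$ with $h$ continuous and $W$-invariant on $\overline B$, so $u_f=h$ extends continuously to $\overline B$ with $u_f|_S=f$, as required.

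The main obstacle I foresee is the smooth (respectively continuous) division by $\pi$ across the walls, needed both to extend $Q$ smoothly and to read off the boundary limit above; this is a standard consequence of Chevalley/Schwarz for Weyl-invariants combined with divisibility of alternating functions by $\pi$, but it must be applied in the correct regularity class. A secondary point is that the $\pi$-conjugation identity matches coefficients precisely when $k(\alpha)\equiv 1$ (the complex case), which is exactly the hypothesis of the proposition.
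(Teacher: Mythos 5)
Your verification that the displayed right-hand side satisfies Definition~\ref{def:PW} follows the same route as the paper -- the conjugation identity \eqref{operator} together with the observation that each summand is a classical Poisson kernel -- and you supply details the paper dismisses as ``straightforward'': the $W\times W$-invariance check, the harmonicity via $\pi^{-1}\Delta\pi=\Delta^W$, and the boundary-value computation by unfolding the alternating sum to the classical Poisson integral of $f\pi$. Your flagged concern (division by $\pi$ in the right regularity class) is exactly where care is needed, and reducing to the classical Poisson integral of $f\pi$ is a clean way to handle it.

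There is, however, a genuine gap: you propose to ``invoke the uniqueness mentioned in the preceding remark'' as an external input. That is not available; the sentence preceding Proposition~\ref{altern} says the proposition will \emph{show} uniqueness, and the remark that follows merely notes that the forthcoming uniqueness argument works for general multiplicities. The paper's proof in fact devotes most of its text to uniqueness: if $\tilde P$ is another $W\times W$-invariant solution, then $\int_S (P^W(x,y)-\tilde P(x,y))\,g(y)\,\pi^2(y)\,dy=0$ for every continuous $W$-invariant $g$ on $S$ and every $x\in B$, and by identifying $\mathcal{C}_W(S)$ with $\mathcal{C}(S^+)$ via the continuity of the projection $\pi_{\a^+}$ (Lemma~\ref{projection}) one concludes $P^W=\tilde P$ on $S^+$, hence on $S$. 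Without this step you have exhibited one solution of the $W$-invariant Dirichlet problem, but you have not shown that it is the (unique) $W$-invariant Poisson kernel, which is what the proposition asserts. Adding that argument makes your proposal essentially coincide with the paper's proof, with more detail on the existence side and the same measure-theoretic uniqueness argument on the other.
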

\begin{proof}
The derivation of \eqref{PoissonComplexX} is based on the properties of the regular Euclidean Poisson kernel $\frac{1-|x|^2}{w_d}\frac{1}{|x-y|^{d}}$
and on the formula
\begin{equation}\label{operator}
\Delta^W f= \pi^{-1}\, \Delta^{\R^d} (\pi\, f),
\end{equation}
(see \cite[Chap.{} II, Theorem 5.37]{Helgason2}).

It is straightforward that this Weyl-invariant kernel solves the Dirichlet problem on the ball $B$.

Suppose that there is another Weyl-invariant kernel $\tilde{P}$ which also solves the Dirichlet problem on the ball $B$. We have 
\begin{align*}
\int_S\,(P^W(x,y)-\tilde{P}(x,y))\,g(y)\,\pi^2(y){dy}=0
\end{align*}
for every continuous Weyl-invariant function $g$ on $S$. There is a one to one correspondence between the continuous Weyl-invariant functions on $S$ and those on $S^+$. 
 It suffices to note that the map $x\mapsto \pi_{\a^+}(x)$ which projects $x$ on the unique $y=w\,x\in\overline{\a^+}$
is continuous: this is a direct consequence of Lemma \ref{projection}. Hence, for every continuous function $g$ on $S^+$ and for every $x\in B$, we have
\begin{align*}
\int_{S\cap\overline{\a^+}}\,(P^W(x,y)-\tilde{P}(x,y))\,g(y)\,\pi^2(y)dy=0.
\end{align*}
By the properties of the Lebesgue integral, we can therefore conclude that $P^W(x,y)-\tilde{P}(x,y)=0$ for every $y\in S^+$ and, hence, for every $y\in S$.
\end{proof}

\begin{remark}\label{average}
Because of the uniqueness of $P^W$, we can conclude that
\begin{align}
P^W(x,y)=\frac{1}{|W|}\sum_{w\in W}\,P_k(x,w\,y)\label{summation}
\end{align}
where $P_k$ is the Dunkl Poisson kernel and $k=1$ since it satisfies the Definition \ref{def:PW}.

Observe that the right hand term in \ref{summation} is not only Weyl-invariant in $y$ but also in $x$.  This follows from the fact that in proving the uniqueness in Proposition \ref{altern}, we only used Weyl-invariance in $y$.
\end{remark}
 

\subsubsection{$W$-invariant Newton kernel}
Recall that the heat kernel of $\Delta_k$ is given by (see \cite{Ro2})
\begin{align*}
p^k_t (x,y) =
\frac{M_k}{t^{\gamma+ d/2}}\,e^{-(|x|^2+|y|^2)/4t} 
E_k\bigl(\frac{x}{\sqrt{2t}}, \frac{y}{\sqrt{2t}}\bigr),
\end{align*}
where 
\begin{align*}
M_k = 2^{-\gamma-d/2}\bigl(\int_{\RR^d} e^{-|x|^2/2}\omega_k(x)dx\bigr)^{-1}
\end{align*}
and $E_k(x,y)=V_k(e^{\langle\cdot,y\rangle})(x)$ denotes the Dunkl kernel. The Newton kernel of $\Delta_k$ is defined by
\begin{align*}
N_k(x,y)=\int_0^\infty\,p^k_t (x,y)\,dt.
\end{align*}
A general formula for $N_k(x,y)$ involving the intertwining operator, analogous to \eqref{PoissonDunkl}, was proven in \cite{Gallardo2}. It is given by
\begin{align}
N_k(x,y)
&=\frac{2^{2\,\gamma}\,((d-2)/2)_\gamma}{|W|\,(d-2)\,w_d\,\pi(\rho)}\,V_k\left({(|y|^2-2\,\langle x,\cdot\rangle+| x|^2)^{(2-d-2\,\gamma)/2}} \right)(y)\label{NewtonDunkl}
\end{align}
(as in the case of the Poisson kernel, we are using a slightly different normalization).
 
The $W$-invariant Newton kernel serves as the inverse of the operator $\Delta^W$. It solves the problem $\Delta^W\,u=f$ where $f$ is given and $|u(x)|\to0$ as $x\to\infty$. It is defined by 
\begin{align*}
N^W(x,y)=\int_0^\infty\,p^W_t(x,y)\,dt,
\end{align*}
where
\begin{align*}
p^W_t(x,y)=\frac{1}{|W|}\,\sum_{w\in W}\,p^k_t (x,w\,y)
\end{align*}
is the heat kernel of $\Delta^W$. We then have 
\begin{align*}
N^W(x,y)=\frac{1}{|W|}\,\sum_{w\in W}\,N_k(x,w\,y).
\end{align*}
Using similar arguments as for the Poisson kernel, we also have the alternating formulas
\begin{align*}
N^W(x,y)&=\frac{1}{4\,\pi\,\pi(x)\,\pi(y)}\,\sum_{w\in W}\,\epsilon(w)\,\ln|x-w\cdot y|~~ \hbox{when $d=2$},\\
N^W(x,y)&=\frac{1}{|W|\,(2-d)\,w_d\,\pi(x)\,\pi(y)}
\,\sum_{w\in W}\,\frac{\epsilon(w)}{|x-w\, y|^{d-2}}~~ \hbox{when $d\geq3$},\nonumber
\end{align*}
in the geometric complex case. In particular,
\begin{align}
N^W(x,0)&=\int_0^\infty\,p^W_t(x,0)\,dt
=\frac{1}{|W|\,2^d \,\pi^{d/2}\,\pi(\rho)} \,\int_0^\infty\,t^{-\frac{d}2-\gamma} \,e^{-|x|^2/(4\,t)}\,dt\nonumber\\
&=\frac{\int_0^\infty\,u^{-\frac{d}{2}-\gamma} \,e^{-1/(4\,u)}\,du}{|W|\,2^d \,\pi^{d/2}\,\pi(\rho)} 
\,\frac{1}{|x|^{d-2+2\,\gamma}}=\frac{2^{2\,\gamma-2}\,\Gamma(d/2+\gamma-1)}{|W|\,\pi^{d/2}\,\pi(\rho)} 
\,\frac{1}{|x|^{d-2+2\,\gamma}}.\label{interesting}
\end{align}

\subsection{Main results of the paper}
{We write $f\asymp g$ when there are constants $0<C_1\leq C_2$ depending only on the dimension, on the choice of the root system
and of the multiplicity function $k$,
such that, on the common domain of $f$ and $g$, we have $C_1\,f\leq g\leq C_2\,f$.}

\begin{conjecture}\label{TL} 
For $x\in B^+$ and $y\in S^+$ we have
\begin{align*}
P^W(x,y)\asymp \Omega(x,y):=\frac{1-|x|^2}{|x-y|^d\prod_{\alpha\in \Phi_+} |x-\sigma_\alpha y|^{2k(\alpha) }}.
\end{align*}
\end{conjecture}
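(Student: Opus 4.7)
The starting point is the averaging identity from Remark \ref{average}, $P^W(x,y)=\tfrac{1}{|W|}\sum_{w\in W}P_k(x,wy)$, together with Dunkl's formula \eqref{PoissonDunkl} and the R\"osler representation \eqref{intertwiner}. Using the identity $1-2\langle x,z\rangle+|x|^2=|x-z|^2+(1-|z|^2)$ and the $W$-equivariance $\mu_{wy}^k=w_{\ast}\mu_y^k$ of R\"osler's measure, one obtains
\begin{align*}
P^W(x,y)=\frac{2^{2\kappa}\,(d/2)_\kappa}{|W|\,w_d\,\pi(\rho)}\sum_{w\in W}\int_{C(y)}\frac{1-|x|^2}{\bigl(|x-wz|^2+1-|z|^2\bigr)^{\kappa+d/2}}\,d\mu_y^k(z).
\end{align*}
The conjecture thus reduces to showing that this double sum/integral is comparable, uniformly on $B^+\times S^+$, to $|x-y|^{-d}\prod_{\alpha\in\Phi_+}|x-\sigma_\alpha y|^{-2k(\alpha)}$.

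\textbf{Upper bound.} For $x\in B^+$, $y\in S^+$, Remark \ref{scalprod} yields $|x-wy|\ge |x-y|$ for every $w\in W$, and the elementary identity
\begin{align*}
|x-\sigma_\alpha y|^{2}=|x-y|^{2}+\frac{4\,\langle\alpha,x\rangle\,\langle\alpha,y\rangle}{|\alpha|^{2}}\ge |x-y|^{2}
\end{align*}
shows that each factor $|x-\sigma_\alpha y|$ in the target product is also $\ge |x-y|$. My strategy for the upper bound is to apply a weighted AM--GM (with weights $d$ and $2k(\alpha)$, summing to $d+2\kappa$) to obtain the pointwise bound
\begin{align*}
\bigl(|x-wz|^{2}+1-|z|^{2}\bigr)^{\kappa+d/2}\gtrsim |x-y|^{d}\prod_{\alpha\in\Phi_+}|x-\sigma_\alpha y|^{2k(\alpha)}
\end{align*}
on a subset of $C(y)$ carrying most of the mass of $\mu_y^k$, and to absorb the remaining region into a coarser uniform estimate. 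Summation over $w$ and integration against the probability measure $\mu_y^k$ would then yield $P^W(x,y)\lesssim\Omega(x,y)$.

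\textbf{Lower bound and main obstacle.} The lower bound is the hard direction. It requires showing that a definite portion of the mass of $\mu_y^k$ lies in a region where the integrand matches the target size; heuristically, that $\mu_y^k$ places non-negligible weight near the vertex $y$ of $C(y)$, in a quantitative form dictated by the root system. For general $\Phi$ and $k$ this is exactly the information on $V_k$ and $\mu_y^k$ that is not available, and which the introduction singles out as the reason the conjecture is only proved in the complex case (via the alternating sum of Proposition \ref{altern}) and in the product case $\ZZ_2^N$ (via the explicit $V_k$). As an alternative route, I would try to verify that $\Omega(\cdot,y)$ is a local sub-/super-solution of $\Delta^W u=0$ on $B^+\setminus\{y\}$ with the correct boundary behavior on $S^+$ and on the walls of $\overline{\a^+}$, and then conclude by a boundary Harnack principle for $\Delta^W$. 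Establishing such a principle with exponents sharp enough to reproduce all factors $|x-\sigma_\alpha y|^{-2k(\alpha)}$ is the step I expect to be hardest; at present it is available only in rank one \cite{GLR} and in the cases settled in the body of the present paper.
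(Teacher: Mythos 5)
Conjecture \ref{TL}, as stated for an arbitrary crystallographic root system and nonnegative multiplicity $k$, is not proved in the paper; the paper establishes it only for complex root systems (Theorem \ref{main k1}, via the alternating-sum formula \eqref{PoissonComplexX}) and for the rank-one direct product case (Theorem \ref{main pr}, via the explicit dual Abel transform). You correctly recognize this and do not claim a complete proof, so to that extent your diagnosis of the state of the art is accurate. But the two sub-arguments you do sketch each contain concrete gaps worth naming. For the upper bound, a pointwise bound of the form $\bigl(|x-wz|^2+1-|z|^2\bigr)^{\kappa+d/2}\gtrsim |x-y|^d\prod_\alpha|x-\sigma_\alpha y|^{2k(\alpha)}$ on $C(y)$ is false in general: for $z$ near a vertex $w_0y$ with $w_0\ne\mathrm{id}$ and $w=w_0^{-1}$, the left side is $\asymp |x-y|^{d+2\kappa}$, which is $\le |x-y|^d\prod_\alpha|x-\sigma_\alpha y|^{2k(\alpha)}$ since $|x-\sigma_\alpha y|\ge|x-y|$; so the integrand there \emph{exceeds} $\Omega(x,y)$. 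Salvaging this requires quantitative decay of $\mu_y^k$ away from $y$, which is precisely the missing information about the intertwining operator. Without it, what one can prove in general is only the weaker Proposition \ref{ENC}, $P^W(x,y)\lesssim (1-|x|^2)|x-y|^{-(d+2\kappa)}$, which does not imply $P^W\lesssim\Omega$. For the lower bound you fall back to an unestablished boundary Harnack principle, which as you note is currently known only in the cases already covered.

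It is also worth contrasting your proposed route with what the paper actually does in the cases it settles. You work from the positive representation $V_kf(y)=\int_{C(y)}f\,d\mu_y^k$, which is natural but requires information about $\mu_y^k$ that is unavailable. The paper instead exploits the closed-form alternating sum \eqref{PoissonComplexX}, which is \emph{not} a positive representation: it decomposes $\sum_{w\in W}\epsilon(w)|x-wy|^{-d}$ into a main term over a basic root subsystem $W'$ and a remainder over $W_0=W\setminus W'$, proves the result by induction on the rank of $\Phi'$, and controls the remainder by a multivariate l'H\^opital/Cauchy mean value argument (Lemmas \ref{Bound} and \ref{Hospital+}), with the choice of $\Phi'$ adapted to the subregion $S_{\Phi',c}$ containing $(x,y)$ (Lemma \ref{reduc}). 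The crucial general-$k$ partial results Proposition \ref{ENC} (framing bounds) and Proposition \ref{smallgamma} (the region $\alpha(y)\le D|x-y|$) are in the paper and would also be useful starting points for any future attack on the general conjecture, but the alternating-sum mechanism and the root-subsystem induction have no analogue once $k(\alpha)\neq 1$.
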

\noindent In the case of the Newton kernel, the conjecture is the following.
 
\begin{conjecture}\label{TLN} 
For $x, y\in \overline{\a^+}$ and $d\geq 3$, we have
\begin{align*}
N^W(x,y)\asymp \frac{1}{|x-y|^{d-2}\,\prod_{\alpha\in \Phi_+}|x-\sigma_\alpha y|^{2\,k(\alpha)}}.
\end{align*}
\end{conjecture}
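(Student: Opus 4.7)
The plan is to combine the integral representation \eqref{NewtonDunkl} of the Dunkl Newton kernel with R\"osler's probabilistic representation \eqref{intertwiner} of the intertwining operator, and then to symmetrize over the Weyl group. A useful consistency check comes from \eqref{interesting}: setting $y=0$ there gives $N^W(x,0)\asymp |x|^{-(d-2+2\gamma)}$, which matches the conjectured right-hand side exactly, since $|x-y|^{d-2}\prod_{\alpha\in\Phi_+}|x-\sigma_\alpha y|^{2k(\alpha)}$ collapses to $|x|^{d-2+2\gamma}$ at $y=0$. This at least shows the conjectured shape is sharp at a non-trivial point and any proposed approach must match it.

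For general $x,y\in\overline{\a^+}$, combining $N^W(x,y)=\frac{1}{|W|}\sum_{w\in W}N_k(x,wy)$ with \eqref{NewtonDunkl} and \eqref{intertwiner} yields, up to a positive constant,
\begin{align*}
N^W(x,y)\;=\;c\sum_{w\in W}\int_{C(wy)}\bigl(|y|^2-2\langle x,z\rangle+|x|^2\bigr)^{-(d-2+2\gamma)/2}\,d\mu^k_{wy}(z).
\end{align*}
For the upper bound I would use $|z|\le|y|$ on $\mathrm{supp}\,\mu^k_{wy}$, so that $|y|^2-2\langle x,z\rangle+|x|^2\ge |x-z|^2$, thereby reducing to bounding $\int_{C(y)}|x-z|^{-(d-2+2\gamma)}d\mu^k_{wy}(z)$ from above. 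The target bound $|x-y|^{-(d-2)}\prod_\alpha|x-\sigma_\alpha y|^{-2k(\alpha)}$ would follow by combining the elementary inequality $|x-wy|\ge |x-y|$ (valid for $x,y\in\overline{\a^+}$ and $w\in W$, cf. the proof of Lemma \ref{projection}) with concentration-type estimates for the R\"osler measure near the extremal orbit points $\{w'y:w'\in W\}$, calibrated so that the mass near each hyperplane $\langle\alpha,\cdot\rangle=0$ scales like $|x-\sigma_\alpha y|^{2k(\alpha)}$.

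For the lower bound I would use the integral identity $N^W=\int_0^\infty p_t^W\,dt$ together with matching Gaussian-type lower bounds for the $W$-invariant heat kernel, splitting the time integral into a small-$t$ regime (where the heat concentrates on the straight segment from $x$ to the nearest point $w_0 y$, producing the factor $|x-y|^{-(d-2)}$) and a large-$t$ regime (where $p_t^W$ spreads over $C(y)$ and can be compared with \eqref{interesting}, producing the $\prod_\alpha|x-\sigma_\alpha y|^{-2k(\alpha)}$ factor via $\omega_k$-scaling). Alternatively, a monotonicity-in-$k$ argument anchored at the already-proved complex case $k\equiv 1$ and the $\ZZ_2^N$ case of Section \ref{sec:product} might give the lower bound for a dense family of multiplicities, extended by continuity of the R\"osler measure in $k$.

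The main obstacle is the absence of sharp two-sided control of the R\"osler measure $\mu^k_y$ for general root systems and multiplicities. Outside the geometric complex case (Section \ref{sec:complex}) and the rank-one product case (Section \ref{sec:product}), essentially only $\mathrm{supp}\,\mu^k_{y}\subset C(y)$ and some low-order moments are known, which is far from sufficient. Proving the precise polynomial behavior of $\mu^k_y$ near each reflecting hyperplane $\langle\alpha,\cdot\rangle=0$ — with exponent matching the $2k(\alpha)$ appearing in the conjecture — is the genuine analytic input that is presently missing. Promising routes are induction on the rank via parabolic sub-systems (reducing ultimately to the rank-one estimates of \cite{GLR}), and a direct PDE comparison using the explicit form \eqref{DeltaW} of $\Delta^W$ together with the maximum principle applied to $N^W$ and the conjectured majorant.
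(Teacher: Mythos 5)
Your proposal is a candid research program rather than a proof, and it correctly locates the central difficulty; but that difficulty is not a step to be filled in --- it is the reason the statement is called a \emph{conjecture} in the paper. The authors establish it only when $k\equiv 1$ (Theorem \ref{NewtonProof}) and in the rank-one direct product case $\ZZ_2^N$ (Section \ref{sec:product}); the general case is open.

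Concretely, the gap in your approach is exactly where you say it is. Your upper-bound step --- using $|z|\le|y|$ on $C(wy)$ so that $|y|^2-2\langle x,z\rangle+|x|^2\ge|x-z|^2$, together with $|x-wy|\ge|x-y|$ --- reproduces only the crude framing bounds of Proposition \ref{ENC} and Remark \ref{generalDunkl}: $N^W(x,y)$ is squeezed between multiples of $|x-w_{max}y|^{-(d-2+2\gamma)}$ and $|x-y|^{-(d-2+2\gamma)}$, which is far from the anisotropic rate $|x-y|^{-(d-2)}\prod_{\alpha}|x-\sigma_\alpha y|^{-2k(\alpha)}$. Closing this via ``concentration-type estimates for the R\"osler measure, calibrated so that the mass near each hyperplane scales like $|x-\sigma_\alpha y|^{2k(\alpha)}$'' is the genuine open analytic problem: no such two-sided control of $\mu^k_y$ near the reflecting hyperplanes is known for general root systems and multiplicities, and nothing in the paper supplies it. Your lower-bound route via heat-kernel time-splitting would require sharp two-sided $W$-invariant Dunkl heat kernel estimates, which are equally unavailable in this generality, and the suggested monotonicity of $N^W$ or $\mu^k_y$ in $k$ is not established anywhere; so both alternatives beg the question.

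What the paper actually does in the two proven cases is to \emph{circumvent} the R\"osler measure. For $k\equiv 1$, the identity $\Delta^W f=\pi^{-1}\Delta(\pi f)$ yields the explicit alternating-sum formula $N^W(x,y)=c\,\pi(x)^{-1}\pi(y)^{-1}\sum_{w\in W}\epsilon(w)|x-wy|^{-(d-2)}$; the two-sided bound then follows by induction on the rank of basic root subsystems $\Phi'$, a decomposition of $\overline{\a^+}\times\overline{\a^+}$ into subregions $S_{\Phi',c}$ according to which roots are small relative to $|x-y|$, and a multivariate de l'Hospital argument (Proposition \ref{Hospital+}) controlling the remainder $\sum_{w\in W_0}$. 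In the $\ZZ_2^N$ case, Sawyer's explicit formula for the intertwining operator reduces $N^W$ to nested one-dimensional integrals handled by Lemma \ref{reduction}. Your consistency check at $y=0$ against \eqref{interesting} is correct but is already noted in Section \ref{sec:prelim}. In short: you have not proven the statement, and the paper does not prove it either in the generality you address; you have, however, accurately described the missing ingredient that would be needed.
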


\noindent We now formulate our main results.

\begin{theorem}\label{main k1}
Conjectures \ref{TL} and \ref{TLN} hold for all complex root systems. 
\end{theorem}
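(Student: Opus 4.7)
The plan is to reduce Conjectures \ref{TL} and \ref{TLN} to a single two-sided estimate on the alternating sum
\begin{align*}
\Sigma_\delta(x,y) := \sum_{w\in W}\frac{\epsilon(w)}{|x-wy|^\delta}.
\end{align*}
In the complex case, Proposition \ref{altern} and its Newton counterpart (stated just above the theorem) express $P^W$ and $N^W$ in terms of $\Sigma_d$ and $\Sigma_{d-2}$ divided by $\pi(x)\pi(y)$, up to explicit constants and the prefactor $1-|x|^2$ that is already present on both sides of Conjecture \ref{TL}. The two conjectures therefore reduce to
\begin{align*}
\Sigma_\delta(x,y) \asymp \frac{\pi(x)\pi(y)}{|x-y|^\delta\,\prod_{\alpha\in\Phi_+}|x-\sigma_\alpha y|^2},\qquad x,y\in\overline{\a^+},
\end{align*}
with $\delta=d$ for the Poisson case and $\delta=d-2\geq 1$ for the Newton case.

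The numerator $\pi(x)\pi(y)$ comes out automatically from antisymmetry: the substitution $w\mapsto w\sigma_\alpha$ gives $\Sigma_\delta(x,\sigma_\alpha y)=-\Sigma_\delta(x,y)$, and the analogous reindexing in $x$ shows that $\Sigma_\delta$ is $W$-antiinvariant in each argument, so $\Sigma_\delta/(\pi(x)\pi(y))$ extends smoothly to the open chamber product away from the singular locus $\{x = wy : w \in W\}$. Equivalently, using the BGG--Demazure divided-difference operator $\partial_{w_0}$ associated with the longest Weyl element, one has $\Sigma_\delta(x,y) = \pi(y)\,\partial_{w_0,y}(1/|x-y|^\delta)$. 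To expose the remaining denominator $\prod_\alpha|x-\sigma_\alpha y|^{-2}$, I would unfold $\partial_{w_0,y}$ along a reduced expression $w_0 = s_{i_1}\cdots s_{i_N}$ and expand each simple pairing via the mean-value identity
\begin{align*}
\frac{1}{|x-wy|^\delta} - \frac{1}{|x-\sigma_\alpha wy|^\delta} = \frac{2\delta\,\langle\alpha, wy\rangle}{|\alpha|^2}\int_0^1 \frac{\langle\alpha, x-y_t^{(w)}\rangle}{|x-y_t^{(w)}|^{\delta+2}}\,dt,
\end{align*}
where $y_t^{(w)}$ interpolates linearly between $\sigma_\alpha wy$ and $wy$. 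The main part of the integrand carries a factor $\langle\alpha, x\rangle\langle\alpha, wy\rangle/|\alpha|^2$; iterating along the reduced expression and running the symmetric argument in $x$ is expected to deliver $\pi(x)\pi(y)$ in the numerator, while the denominator exponent rises by $2$ at each step.

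The main technical obstacle is matching the accumulated denominator with the clean product $|x-y|^{-\delta}\prod_\alpha|x-\sigma_\alpha y|^{-2}$ uniformly in $x,y\in\overline{\a^+}$. A subtlety is that the reduced expression uses only the simple reflections while the conjectured product involves all positive roots, so the factors $|x-\sigma_\alpha y|^{-2}$ associated with non-simple $\alpha$ must emerge from the combinatorial structure of the iteration rather than from a single step. Remark \ref{scalprod} identifies $|x-y|$ as the smallest Weyl-orbit distance, accounting for the factor $|x-y|^{-\delta}$. A clean implementation will likely require a partition of $\overline{\a^+}\times\overline{\a^+}$ according to the relative sizes of the orbit distances $|x-wy|$, followed by a region-by-region matching, with special care at the walls where several factors degenerate simultaneously. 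The upper bound on $\Sigma_\delta$ will follow from the iterated identity and these size estimates; the lower bound requires exhibiting, in each region, a subdomain of interpolation parameters on which the integrand keeps its sign and a definite magnitude. Substituting back into Proposition \ref{altern} and its Newton analogue then yields both Conjectures \ref{TL} and \ref{TLN} for all complex root systems.
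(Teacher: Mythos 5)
Your opening reduction is correct and matches the paper's: in the complex case, Proposition \ref{altern} and its Newton analogue reduce both conjectures to the two-sided bound
\begin{align*}
\sum_{w\in W}\frac{\epsilon(w)}{|x-wy|^\delta}\asymp \frac{\pi(x)\pi(y)}{|x-y|^\delta\prod_{\alpha\in\Phi_+}|x-\sigma_\alpha y|^2},\qquad x,y\in\overline{\a^+},
\end{align*}
with $\delta=d$ (Poisson) or $\delta=d-2\geq 1$ (Newton). The $W$-antisymmetry observation and the fact that $\Sigma_\delta/(\pi(x)\pi(y))$ is smooth are both sound. After that point, however, your argument stops being a proof and becomes a plan, and the plan leaves the two hardest steps unresolved.

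First, you correctly identify that iterating a one-step mean-value identity along a reduced word $w_0=s_{i_1}\cdots s_{i_N}$ only visibly produces factors attached to simple roots (or to the roots $s_{i_1}\cdots s_{i_{k-1}}\alpha_{i_k}$ met along the word), whereas the conjectured denominator is a product over \emph{all} positive roots $\alpha$, of $|x-\sigma_\alpha y|^{-2}$ with the reflection taken at the original $y$, not at a conjugated point. You flag this ("the factors $|x-\sigma_\alpha y|^{-2}$ associated with non-simple $\alpha$ must emerge from the combinatorial structure of the iteration"), but you do not show that they do, nor do you show how to compare $|x - s_{i_1}\cdots s_{i_{k-1}} \sigma_{\alpha_{i_k}} \cdots y|$ with $|x-\sigma_\alpha y|$ uniformly over $\overline{\a^+}\times\overline{\a^+}$. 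This is precisely the obstruction; the paper avoids it entirely by inducting on the rank rather than on the length of $w_0$: it partitions $B^+\times S^+$ (resp.\ $\overline{\a^+}\times\overline{\a^+}$) into subregions $S_{\Phi',c}$ indexed by basic root subsystems $\Phi'$ (Lemma \ref{reduc}), peels off the sub-sum over $W'$ as a lower-rank problem handled by the induction hypothesis, and treats the remainder $\sum_{w\in W_0}\epsilon(w)|x-wy|^{-\delta}$ as an error term.

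Second, the lower bound is essentially unaddressed. In an alternating sum the difficult direction is showing the sum does not cancel down to a lower order, and your proposal only says one should find "a subdomain of interpolation parameters on which the integrand keeps its sign and a definite magnitude." That is a heuristic, not an argument. The paper's mechanism for the lower bound is quantitative and crucial: Proposition \ref{Hospital+} (a multivariate L'Hospital estimate built from iterated Cauchy mean-value theorems), combined with Lemma \ref{Bound}, shows the remainder term is $O(c^d)$ on $S_{\Phi',c}$, and the constant $c$ is then chosen small enough (conditions \eqref{cond c1}--\eqref{cond c2}) that the remainder is at most $K_1/2$, half the inductive lower bound for the main term. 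Without a comparable quantitative control, the induction has no base on which to stand. In short: same entry point, but the core of the proof — the subregion decomposition, the rank induction, and the L'Hospital bound on the remainder — is missing from your proposal and is not obviously replaceable by the reduced-word iteration you sketch.
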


\begin{theorem}\label{main pr}
Conjectures \ref{TL} and \ref{TLN} hold for the direct product rank 1 case. 
\end{theorem}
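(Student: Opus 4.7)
The plan exploits the factorized structure of the direct product rank 1 setting: up to an orthogonal change of basis we may take $\Phi_+ = \{e_1, \ldots, e_N\}$, so each $\sigma_i$ flips the $i$-th coordinate, $W = \ZZ_2^N$, and the intertwining operator is a tensor product of the rank-one R\"osler intertwiners,
\begin{align*}
V_k f(y) = \prod_{i=1}^N c_{k_i} \int_{[-1,1]^N} f(t_1 y_1, \ldots, t_N y_N, y_{N+1}, \ldots, y_d) \prod_{i=1}^N (1+t_i)(1-t_i^2)^{k_i-1}\, dt_i
\end{align*}
(with the convention that $t_i = 1$ and no integration are taken in the $i$-th slot whenever $k_i = 0$). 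I would substitute this into \eqref{NewtonDunkl} and \eqref{PoissonDunkl} applied to $N_k(x,wy)$, $P_k(x,wy)$, and then average over $w \in W$. Under the simultaneous substitution $y_i \mapsto \epsilon_i y_i$, $t_i \mapsto \epsilon_i t_i$ for $\epsilon \in \{\pm 1\}^N$, the function value and the symmetric weight $(1-t_i^2)^{k_i-1}$ are preserved while the asymmetric factor $(1+t_i)$ averages over $\epsilon$ to $1$. Setting $s_i = (1-t_i)/2$ and using the identity
\begin{align*}
|y|^2 + |x|^2 - 2\langle x, z\rangle = |x-y|^2 + \sum_{i=1}^N 4 x_i y_i s_i \quad (z_i = t_i y_i,\ i \leq N;\ z_j = y_j,\ j > N),
\end{align*}
the symmetrized kernels reduce to
\begin{align*}
N^W(x,y) &\asymp \int_{[0,1]^N} \frac{\prod_i [s_i(1-s_i)]^{k_i-1}\, ds}{\bigl(r + \sum_i a_i s_i\bigr)^{D/2}}, \\
P^W(x,y) &\asymp (1-|x|^2) \int_{[0,1]^N} \frac{\prod_i [s_i(1-s_i)]^{k_i-1}\, ds}{\bigl(r + \sum_i a_i s_i\bigr)^{(D+2)/2}},
\end{align*}
where $r = |x-y|^2$, $a_i = 4 x_i y_i$, $D = d-2+2\gamma$, and the crucial identity $|x - \sigma_i y|^2 = r + a_i$ holds since $x, y \in \overline{\a^+}$.

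This reduces both conjectures to the uniform integral estimate
\begin{align*}
\int_{[0,1]^N} \frac{\prod_i [s_i(1-s_i)]^{k_i-1}\, ds}{(r + \sum_i a_i s_i)^{D/2}} \asymp \frac{1}{r^{(d-2)/2} \prod_i (r + a_i)^{k_i}}
\end{align*}
(and analogously with exponent $(D+2)/2$ for Poisson). My approach is to use the Gamma identity
\begin{align*}
(r + A)^{-D/2} = \Gamma(D/2)^{-1} \int_0^\infty u^{D/2-1} e^{-u(r+A)}\, du
\end{align*}
with $A = \sum a_i s_i$, so that Fubini separates the $s$-variables into rank-one integrals satisfying the elementary Laplace-type estimate
\begin{align*}
\int_0^1 [s(1-s)]^{k-1} e^{-\lambda s}\, ds \asymp (1+\lambda)^{-k}, \qquad \lambda \geq 0
\end{align*}
(upper bound by extending to $[0,\infty)$; lower bound by restricting to $s \in [0, \min(1, 1/\lambda)]$). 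Rescaling $u = v/r$ then reduces the claim to
\begin{align*}
\int_0^\infty v^{D/2-1} e^{-v} \prod_{i=1}^N (1 + v\beta_i)^{-k_i}\, dv \asymp \prod_{i=1}^N (1 + \beta_i)^{-k_i}, \qquad \beta_i := a_i/r,
\end{align*}
which I would verify by reordering $\beta_1 \leq \cdots \leq \beta_N$ and splitting the $v$-range at the critical scales $\{1/\beta_i : \beta_i \geq 1\}$ so as to match the contribution of each factor on the right-hand side.

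The main obstacle is the uniformity of this last estimate over all configurations of $(\beta_i)_{i=1}^N$: there are effectively $2^N$ regimes depending on which $\beta_i$ exceed $1$, and bookkeeping must be done carefully. A cleaner alternative is induction on $N$, using the rank-one result of \cite{GLR} as the base case and the factorization $V_k = V_{k_N} \circ \cdots \circ V_{k_1}$ to peel off one coordinate at a time; either route reduces the bulk of the argument to the Laplace-type estimate above, which is the key simplification afforded by the product structure.
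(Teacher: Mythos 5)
Your proposal is correct but takes a genuinely different route from the paper. Both proofs start from the same product integral representation
\[
\frac{P^W(x,y)}{1-|x|^2}\asymp\int_{[0,1]^N}\frac{\prod_i(s_i(1-s_i))^{k_i-1}\,ds}{\bigl(|x-y|^2+\sum_i a_i s_i\bigr)^{d/2+\gamma}},\qquad a_i=4x_iy_i,
\]
(the paper reaches this via the $A_1^J$ dual Abel transform of Sawyer; you via the $\ZZ_2^N$ R\"osler measure and $W$-averaging, which are equivalent). The paper then proves Lemma~\ref{reduction}, a sharp two-sided bound for the one-dimensional integral $\int_0^1 (u(1-u))^{m/2-1}(A+Bu)^{-M}\,du$, and peels off coordinates by induction on $J$, at each step fixing the coordinate with the largest $a_i$. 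You instead inject the subordination identity $(r+A)^{-M}=\Gamma(M)^{-1}\int_0^\infty u^{M-1}e^{-u(r+A)}\,du$ and use Fubini to decouple the $s_i$ completely, reducing everything to the elementary Laplace estimate $\int_0^1(s(1-s))^{k-1}e^{-\lambda s}\,ds\asymp(1+\lambda)^{-k}$ and a final one-dimensional integral in the auxiliary variable. This is a clean and standard alternative; the paper's route avoids the auxiliary Gamma integral but requires the ordering-and-induction device.

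One remark on the part you flag as delicate: the final estimate $\int_0^\infty v^{D/2-1}e^{-v}\prod_i(1+v\beta_i)^{-k_i}\,dv\asymp\prod_i(1+\beta_i)^{-k_i}$ (with $D/2=(d-2)/2+\gamma$ in the Newton case) does not actually require the $2^N$ regime analysis or an ordering of the $\beta_i$. The lower bound comes from $v\in[1/2,2]$. For the upper bound, split at $v=1$: on $v\ge 1$ use $1+v\beta_i\ge 1+\beta_i$; on $v\le 1$ use $1+v\beta_i\ge v(1+\beta_i)$, which gives $\prod_i(1+\beta_i)^{-k_i}\int_0^1 v^{D/2-1-\sum k_i}\,dv$ with exponent $D/2-\gamma-1=(d-2)/2-1$, convergent precisely when $d\ge 3$ — recovering exactly the restriction in Conjecture~\ref{TLN} (and for the Poisson kernel the exponent becomes $d/2-1$, fine for all $d\ge 2$). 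So the uniformity you were worried about is unproblematic, and the divergence at $d=2$ correctly signals the logarithmic correction the paper computes separately in Proposition~\ref{A1m2}.
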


\noindent We also give some results supporting the conjectures in the general Dunkl setting in Section \ref{Dunkl setting}.  Note also that \eqref{interesting} is consistent with Conjecture \ref{TLN}.

\section{Analysis with roots}\label{sec:roots}

\subsection{Some simple formulas}
In this section we give some simple and useful formulas involving roots.

For $\alpha\in \Phi$ define 
\begin{align*}
\phi_\alpha(x,y):=\max\{|x-y|, d(y,H_\alpha)\},
\end{align*} 
where $H_\alpha$ denotes the hyperplane perpendicular to $\alpha$ and $d$ is the usual Euclidean distance.
The following simple properties will be often used.

\begin{lemma}\label{basic simple}~
\begin{itemize}
\item[(i)] Let $y\in \overline{\a^+}$ and $\alpha$ be a positive root. Then 
\begin{align*}
d(y,H_\alpha)=\alpha(y)/|\alpha|.
\end{align*}
\item[(ii)] For any $x,y\in{\a}$ we have 
\begin{align*}
|x-\sigma_\alpha y|^2=|x-y|^2+C\,\alpha(x)\alpha(y)
\end{align*}
where $C=4/|\alpha|^2$.
\end{itemize}
\end{lemma}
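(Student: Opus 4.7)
My plan is to treat both parts as direct one-line consequences of the explicit reflection formula
\begin{align*}
\sigma_\alpha y = y - \frac{2\,\alpha(y)}{|\alpha|^2}\,\alpha,
\end{align*}
combined with the identification $\alpha(x)=\langle A_\alpha, x\rangle$ recorded in the preliminaries. Nothing in the statement depends on the global structure of $\Phi$ or on the Weyl group, so I expect no serious obstacle; the only subtleties are a sign step in (i) and a cancellation in (ii).

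For (i), I would invoke the standard Euclidean formula for the distance from a point to a hyperplane through the origin. Since $H_\alpha$ has unit normal $A_\alpha/|\alpha|$, the distance from any $y\in\a$ to $H_\alpha$ is $|\langle A_\alpha, y\rangle|/|\alpha|=|\alpha(y)|/|\alpha|$. The hypothesis $y\in\overline{\a^+}$ together with $\alpha\in\Phi_+$ gives $\alpha(y)\ge 0$ directly from the definition of the closed positive Weyl chamber, so the absolute value can be dropped to yield the claim.

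For (ii), I would substitute the reflection formula to get $x-\sigma_\alpha y=(x-y)+\tfrac{2\alpha(y)}{|\alpha|^2}\alpha$ and then expand the squared norm of this sum. The cross term produces $\tfrac{4\alpha(y)}{|\alpha|^2}\langle x-y,\alpha\rangle=\tfrac{4\alpha(y)(\alpha(x)-\alpha(y))}{|\alpha|^2}$, and the pure-$\alpha$ term contributes $\tfrac{4\alpha(y)^2}{|\alpha|^2}$. After a one-step simplification the two $\alpha(y)^2$ contributions cancel with opposite signs and what is left is $\tfrac{4\alpha(x)\alpha(y)}{|\alpha|^2}$, which identifies $C=4/|\alpha|^2$.

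The main (minor) obstacle is precisely this last cancellation: if one does not simplify $\langle x-y,\alpha\rangle$ using $\alpha(x)-\alpha(y)$, the constant $C$ appears to depend on $y$ as well. Once the $\alpha(y)^2$ terms are seen to cancel exactly, the result $C=4/|\alpha|^2$ is forced and depends only on the root.
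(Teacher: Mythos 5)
Your proof is correct and follows essentially the same route as the paper's: for (i) you both reduce to the orthogonal decomposition of $y$ along $A_\alpha$ (you invoke the standard point-to-hyperplane distance formula, the paper rederives it), and for (ii) the paper simply says ``direct computation'' and your expansion via the reflection formula $\sigma_\alpha y = y - \tfrac{2\alpha(y)}{|\alpha|^2}\alpha$ is exactly that computation, including the key cancellation of the two $\alpha(y)^2$ terms.
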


\begin{proof}~
\begin{itemize}
\item[(i)] Write $y=y_0+b\,A_\alpha$ where $y_0\in H_\alpha$ and $\langle x,A_\alpha\rangle=\alpha(x)$ for all $x\in\a$. Observe that $b=\alpha(y)/|\alpha|^2$. Then, for $z\in H_\alpha$ we have 
\begin{align*}
|y-z|^2=|y_0-z|^2+b^2|A_\alpha|^2
\end{align*}
which attends its minimum when $z=y_0$. Hence, 
$\displaystyle d(y,H_\alpha)=b\,|\alpha|=\frac{\alpha(y)}{|\alpha|}$.
\item[(ii)] Follows by direct computation.
\end{itemize}
\end{proof}
There is an equivalent formulation for $\phi_\alpha(x,y)$.

\begin{lemma}\label{formul}
Let $x$, $y\in \overline{\a^+}$. We have
\begin{align*}
\phi_\alpha(x,y)\asymp |x-\sigma_\alpha(y)|.
\end{align*}
\end{lemma}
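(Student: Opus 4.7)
My plan is to prove the two inequalities $|x-\sigma_\alpha y| \lesssim \phi_\alpha(x,y)$ and $|x-\sigma_\alpha y| \gtrsim \phi_\alpha(x,y)$ by exploiting the identity in Lemma~\ref{basic simple}(ii), which gives $|x-\sigma_\alpha y|^2 = |x-y|^2 + (4/|\alpha|^2)\alpha(x)\alpha(y)$. Since $x,y\in\overline{\a^+}$ and $\alpha$ is a positive root, we have $\alpha(x),\alpha(y)\ge 0$, so the cross term is nonnegative; this is the key sign condition that makes the proof work.

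For the upper bound, I would bound each of the two summands by a constant multiple of $\phi_\alpha(x,y)^2$. Clearly $|x-y|^2\le \phi_\alpha(x,y)^2$. For the cross term I rewrite it as $4\,d(x,H_\alpha)\,d(y,H_\alpha)$ using Lemma~\ref{basic simple}(i). Then $d(y,H_\alpha)\le\phi_\alpha(x,y)$ by definition, and since the distance to a hyperplane is a $1$-Lipschitz function, $d(x,H_\alpha)\le d(y,H_\alpha)+|x-y|\le 2\phi_\alpha(x,y)$. This yields $|x-\sigma_\alpha y|^2 \le 9\,\phi_\alpha(x,y)^2$.

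For the lower bound, the nonnegativity of the cross term already gives $|x-\sigma_\alpha y|\ge |x-y|$. To control $|x-\sigma_\alpha y|$ by $d(y,H_\alpha)$, I use the elementary identity $|y-\sigma_\alpha y|=2\,d(y,H_\alpha)$ (a direct consequence of the definition of the reflection) together with the triangle inequality $|y-\sigma_\alpha y|\le |y-x|+|x-\sigma_\alpha y|$, which gives $|x-\sigma_\alpha y|\ge 2\,d(y,H_\alpha)-|x-y|$. Splitting into the cases $|x-y|\le d(y,H_\alpha)$ and $|x-y|>d(y,H_\alpha)$, we get $|x-\sigma_\alpha y|\ge d(y,H_\alpha)$ in the first case and $|x-\sigma_\alpha y|\ge |x-y|=\phi_\alpha(x,y)$ in the second. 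In either case, $|x-\sigma_\alpha y|\ge \phi_\alpha(x,y)$.

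There is no real obstacle here: both inequalities reduce to the identity of Lemma~\ref{basic simple}(ii) plus elementary geometry (the Lipschitz property of $d(\cdot,H_\alpha)$ for the upper bound, and the triangle inequality applied to the three points $x$, $y$, $\sigma_\alpha y$ for the lower bound). The only subtlety is the use of $x,y\in\overline{\a^+}$, which enters crucially to ensure that $\alpha(x)\alpha(y)\ge 0$ and hence that the cross term has the correct sign in both directions.
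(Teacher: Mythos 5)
Your proof is correct, and it rests on the same key identity from Lemma~\ref{basic simple}(ii) that the paper uses. For the lower bound, your triangle-inequality argument at the three points $x$, $y$, $\sigma_\alpha y$ together with $|y-\sigma_\alpha y|=2\,d(y,H_\alpha)$ is essentially the same geometric fact as the paper's observation that the segment from $y$ to $\sigma_\alpha x$ crosses $H_\alpha$; both exploit $\alpha(x),\alpha(y)\ge 0$.

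For the upper bound, your argument is in fact \emph{more careful} than the one printed in the paper. The paper's first displayed step asserts the pointwise inequality
\begin{align*}
|x-\sigma_\alpha y|^2\leq |x-y|^2+|y-\sigma_\alpha y|^2,
\end{align*}
which after expanding the square is equivalent to $\langle x-y,\ y-\sigma_\alpha y\rangle\le 0$, i.e.\ to $\alpha(y)\bigl(\alpha(x)-\alpha(y)\bigr)\le 0$; this fails whenever $\alpha(x)>\alpha(y)>0$ (e.g.\ $\alpha=e_1$, $x=(2,0)$, $y=(1,0)$ gives $9\not\le 5$). Of course the \emph{conclusion} $|x-\sigma_\alpha y|^2\lesssim\phi_\alpha(x,y)^2$ is still true, and your route via the $1$-Lipschitz bound $d(x,H_\alpha)\le d(y,H_\alpha)+|x-y|\le 2\phi_\alpha(x,y)$ followed by $C\alpha(x)\alpha(y)=4\,d(x,H_\alpha)\,d(y,H_\alpha)\le 8\phi_\alpha(x,y)^2$ is a clean and correct way to get there. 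So your proposal not only matches the paper's approach in spirit but also repairs a small slip in its upper-bound step.
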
 

\begin{proof}
 Indeed, 
\begin{align*}
|x-\sigma_\alpha y|^2\leq |x-y|^2+|y-\sigma_\alpha y|^2\asymp\phi_\alpha(x,y)^2
\end{align*}
while $|x-\sigma_\alpha y|^2\geq |x-y|^2$ and 
\begin{align*}
|x-\sigma_\alpha y|^2=|\sigma_\alpha x-y|^2\geq d(y,H_\alpha)^2
\end{align*}
(geometric argument: the straight line joining $y$ with $\sigma_\alpha x$ crosses $H_\alpha$; if $\ell(t)=ty+(1-t)\,\sigma_\alpha x$ then $\alpha(\ell(0))=\alpha(y)\geq0$ while $\alpha(\ell(1))=-\alpha(x)\leq 0$).
\end{proof}

\begin{remark}\label{either}
Lemma \ref{formul} shows that in proving Conjectures \ref{TL} and \ref{TLN}, one can replace $|x-\sigma_\alpha\,y|$ by $\phi_\alpha(x,y)$.  We will do this without further mention.
\end{remark}

\begin{remark}\label{XandY}
The same argument shows that 
\begin{align*}
|x-\sigma_\alpha y|^2=|y-\sigma_\alpha x|^2\asymp\max\{|x-y|^2,d(x,H_\alpha)^2\},
\end{align*}
so 
\begin{align*}
{\max\{|x-y|,d(x,H_\alpha)\}\asymp\phi_\alpha(x,y)= \max\{|x-y|,d(y,H_\alpha)\}}.
\end{align*}
\end{remark}

\begin{proposition}\label{XWY}
Let $\alpha_i$ be the simple roots and let $A_{\alpha_i}$ be such that $\langle x,A_{\alpha_i}\rangle=\alpha_i(x)$ for $x\in\a$.
Suppose $x,y\in\a^+$ and $w\in W\setminus\{id\}$. Then we have
\begin{align}\label{CL}
y-w\,y=\sum_{i=1}^r \,2\,\frac{a_i^w(y)}{|\alpha_i|^2}\,A_{\alpha_i}
\end{align}
where $a_i^w$ is a linear combination of positive simple roots with non-negative integer coefficients for each $i$.
\end{proposition}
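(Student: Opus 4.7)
The plan is to prove \eqref{CL} by induction on the Coxeter length $\ell(w)$ of $w$ relative to the simple reflections $s_{\alpha_1},\dots,s_{\alpha_r}$. For the base case $\ell(w)=1$, the element $w=s_{\alpha_j}$ is a simple reflection and the reflection formula $s_{\alpha_j}y=y-\frac{2\,\alpha_j(y)}{|\alpha_j|^2}A_{\alpha_j}$ immediately gives \eqref{CL} with $a_j^w=\alpha_j$ and $a_i^w=0$ for $i\neq j$. Nothing more is needed here.

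For the inductive step, given $w$ of length $\ge 2$, I would fix a reduced expression for $w$, factor $w=s_{\alpha_j}w'$ with $\ell(w')=\ell(w)-1$, and use the one-line telescoping
\[
y-wy=(y-w'y)+\bigl(w'y-s_{\alpha_j}(w'y)\bigr)=(y-w'y)+\frac{2\,(w'^{-1}\alpha_j)(y)}{|\alpha_j|^2}\,A_{\alpha_j},
\]
where I have rewritten $\alpha_j(w'y)=\langle A_{\alpha_j},w'y\rangle=(w'^{-1}\alpha_j)(y)$. Plugging in the inductive formula for $y-w'y$ then yields \eqref{CL} for $w$ with the updated coefficients $a_i^w=a_i^{w'}$ for $i\neq j$ and $a_j^w=a_j^{w'}+w'^{-1}\alpha_j$.

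The only genuine point to check -- and the unique place where the crystallographic hypothesis recalled in Section \ref{sec:prelim} is really used -- is that $a_j^w$ remains a non-negative integer combination of simple roots. For this I would appeal to the standard Coxeter length criterion (see e.g.\ Humphreys, \emph{Reflection Groups and Coxeter Groups}, \S5.7): the condition $\ell(s_{\alpha_j}w')>\ell(w')$ built into our choice of reduced expression forces $w'^{-1}\alpha_j\in\Phi_+$, and since $\Phi$ is crystallographic, every positive root is of the form $\sum_i n_i\alpha_i$ with $n_i\in\ZZ_{\ge 0}$. Adding such a root to $a_j^{w'}$ preserves the required property. I expect no further obstacle; note that the hypothesis $x\in\a^+$ plays no role in \eqref{CL} itself, while $y\in\a^+$ is only relevant insofar as it then guarantees $a_i^w(y)\ge 0$, which will presumably be exploited in the subsequent sections.
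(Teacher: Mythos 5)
Your proof is correct, and it takes a genuinely different route from the paper's at both key points. First, you telescope as $y-wy=(y-w'y)+(w'y-s_{\alpha_j}w'y)$, keeping the inductive coefficients $a_i^{w'}$ untouched for $i\neq j$ and adding only $(w')^{-1}\alpha_j$ to $a_j^{w'}$; the paper instead writes $y-\sigma_{\alpha_j}w_0y=\sigma_{\alpha_j}(y-w_0y)+(y-\sigma_{\alpha_j}y)$ and pushes the reflection $\sigma_{\alpha_j}$ through the sum, which mixes the coefficients via the Cartan integers $2\langle\alpha_j,\alpha_i\rangle/|\alpha_i|^2$. Second, and more significantly, you obtain non-negativity \emph{inside} the induction by invoking the Coxeter length criterion ($\ell(s_{\alpha_j}w')>\ell(w')$ forces $(w')^{-1}\alpha_j\in\Phi_+$, hence a non-negative integer combination of simple roots), whereas the paper only gets integrality from the induction and then proves non-negativity of the coefficients by a separate contradiction argument: it picks $x,y\in\overline{\a^+}$ supported on single simple roots and contradicts the inequality $\langle x,y\rangle\ge\langle x,wy\rangle$ of Remark~\ref{scalprod}. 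Your version buys a cleaner, single-pass induction and localizes the use of crystallographicity exactly where it is needed; the paper's version avoids the length-criterion lemma and stays closer to the elementary geometric fact it has already recorded. One small imprecision worth noting: in the base case you should write $a_i^w=0$ only as a \emph{polynomial}, not literally as the zero root, but that is cosmetic and your inductive formula $a_j^w=a_j^{w'}+(w')^{-1}\alpha_j$, $a_i^w=a_i^{w'}$ for $i\neq j$, is exactly right.
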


\begin{example}
Let us first illustrate the proposition on an example. Consider $w=\sigma_{\alpha_2}\,\sigma_{\alpha_1}$.
We have
\begin{align*}
y-w\,y= \frac{2}{|\alpha_1|^2}\alpha_1(y)A_{\alpha_1} + \frac{2}{|\alpha_2|^2}\,\left(
\frac{-2\langle \alpha_1,\alpha_2\rangle }{|\alpha_1|^2}\alpha_1(y)+\alpha_2(y)\right)\,A_{\alpha_2}
\end{align*}
In particular, for the $A_n$ root system in $\R^d,\ d\ge n$ and the permutation $w=(123)\in W$, we have
$w=\sigma_{\alpha_2}\sigma_{\alpha_1}$and, with the normalization
$|\alpha_i|^2=2$, we get
\begin{align*}
y-wy= \alpha_1(y)A_{\alpha_1} + (
\alpha_1(y)+\alpha_2(y))A_{\alpha_2}.
\end{align*}
\end{example}

\begin{proof}
We first prove by induction on the length of the minimal representation of $w$ as a product of reflections by simple roots that \eqref{CL} holds where $a_i^w(y)$ is a linear combination of simple roots with integer coefficients for each $i$. 

If the length of $w$ is 1 then {$w=\sigma_{\alpha_i}$ for a simple root $\alpha_i$ and}
\begin{align*}
y-\sigma_{\alpha_i}\,y=2\,\frac{\alpha_i(y)}{|\alpha_i|^2}\,A_{\alpha_i}
\end{align*}
and the result holds in that case {with $a_i^w(y)=\alpha_i(y)$}.

Let us assume that the result holds for $k\geq 1$ and let $w=\sigma_{\alpha_j}\,w_0$ where $|w_0|=k$.
\begin{align*}
y-\sigma_{\alpha_j}\,w_0y
&=\sigma_{\alpha_j}\,(y-w_0y)+y-\sigma_{\alpha_j}y
=\sigma_{\alpha_j}\,\Bigl(\sum_{i=1}^r\,a_i^{w_0}(y)\,\frac{2A_{\alpha_i}}{|\alpha_i|^2}\Bigl)+2\,\frac{\alpha_j(y)}{|\alpha_j|^2}\,A_{\alpha_j}\\
&=\sum_{i=1}^r\,a_i^{w_0}(y)\,\sigma_{\alpha_j}\Bigl(\frac{2A_{\alpha_i}}{|\alpha_i|^2}\Bigl)+2\,\frac{\alpha_j(y)}{|\alpha_j|^2}\,A_{\alpha_j}\\
&=\sum_{i=1}^r\,a_i^{w_0}(y)\,\frac{2A_{\alpha_i}}{|\alpha_i|^2}
{-\sum_{i=1}^r\,a_i^{w_0}(y)\,2\,\frac{\langle\alpha_j,\alpha_i\rangle}{|\alpha_i|^2}\,2\,\frac{A_{\alpha_j}}{|\alpha_j|^2} }+2\,\frac{\alpha_j(y)}{|\alpha_j|^2}\,A_{\alpha_j}.
\end{align*}
Note that $2\,\frac{\langle\alpha_j,\alpha_i\rangle}{|\alpha_i|^2}$ is an integer since $\Phi$ is crystallographic.

We now need to prove that $a_i^w(y)$ is a linear combination of simple roots with non-negative integer coefficients for each $i$.

Suppose that this is not the case. To fix matters, we can assume that $a_1$ contains a term, say $b_2\,\alpha_2$, with $b_2<0$. Choose $x$, $y\in \overline{\a^+}$ such that $\alpha_2(y)=t>0$, $\alpha_k(y)=0$ for $k\not=2$ and $\alpha_1(x)=t>0$, $\alpha_k(x)=0$ for $k\not=1$. Then using \eqref{CL},
\begin{align*}
\langle x,y-wy\rangle =b_2\alpha_2(y)\alpha_1(x)=b_2t^2<0
\end{align*}
which cannot be true (refer to Remark \ref{scalprod}). The result follows.
\end{proof}

\begin{corollary}\label{cor:refdist}
There exists $C>0$ such that for every $x$, $y\in \a^+$ and $w\in W\setminus\{id\}$
\begin{itemize}
 \item[(i)] $|x-w\,y|^2\geq |x-y|^2+C\,\min_i\,\alpha_i(x)\,\min_j\alpha_j(y)$,
 \item[(ii)] $|x-w\,y|^2\geq\max\left\{\min\{d(x,H_\alpha)^2\colon\alpha>0\},\min\{d(y,H_\alpha)^2\colon\alpha>0\}\right\}$.
\end{itemize}
\end{corollary}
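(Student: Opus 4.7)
I would prove the two inequalities separately, using Proposition \ref{XWY} for (i) and a direct Weyl chamber geometry argument for (ii).

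For part (i), the starting point is the identity
\[
|x-wy|^2 - |x-y|^2 = 2\langle x,\, y-wy\rangle,
\]
which is immediate from $|wy|=|y|$. Substituting the formula from Proposition \ref{XWY} and using $\langle x, A_{\alpha_i}\rangle = \alpha_i(x)$, this becomes
\[
|x-wy|^2 - |x-y|^2 = \sum_{i=1}^r \frac{4\,a_i^w(y)\,\alpha_i(x)}{|\alpha_i|^2}.
\]
Each $a_i^w(y)$ is a non-negative integer combination of the values $\alpha_j(y)\ge 0$, so every summand is non-negative. For $w\neq \mathrm{id}$, the vector $y-wy$ is nonzero (since $y\in \a^+$ is a regular point), hence at least one $a_i^w$ is a nontrivial linear combination. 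By the crystallographic assumption its coefficients are integers, so some coefficient is $\ge 1$, forcing $a_i^w(y) \ge \alpha_{j_0}(y) \ge \min_j \alpha_j(y)$ for that index $i$ and some $j_0$. Keeping only the corresponding term and bounding $\alpha_i(x) \ge \min_i \alpha_i(x)$ yields
\[
|x-wy|^2 - |x-y|^2 \ge \frac{4}{\max_j |\alpha_j|^2}\, \min_i \alpha_i(x)\, \min_j \alpha_j(y),
\]
which is the claimed bound with a uniform $C > 0$.

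For part (ii), I would use a geometric argument on Weyl chambers. Since $x\in\a^+$ and $wy\in w\a^+$ lie in distinct open chambers (as $W$ acts simply transitively on chambers), the straight segment from $x$ to $wy$ must cross at least one wall $H_\alpha$ with $\alpha>0$. If $p$ is such a crossing point, then trivially
\[
|x-wy|\ge |x-p|\ge d(x, H_\alpha) \quad\text{and}\quad |x-wy|\ge |wy-p|\ge d(wy, H_\alpha).
\]
The first inequality at once gives $|x-wy|^2 \ge \min\{d(x, H_\alpha)^2 : \alpha>0\}$. For the second, since $w$ is orthogonal, $d(wy, H_\alpha) = d(y, w^{-1}H_\alpha) = d(y, H_{w^{-1}\alpha}) = d(y, H_\beta)$ where $\beta$ is the positive root in the pair $\{\pm w^{-1}\alpha\}$, yielding $|x-wy|^2 \ge \min\{d(y, H_\alpha)^2: \alpha>0\}$. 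Taking the larger of the two lower bounds gives the maximum in the statement.

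The substantive step is (i): one must harvest sharp positivity from Proposition \ref{XWY}, and the crystallographic hypothesis enters essentially through integrality of the coefficients of $a_i^w$, which is what permits the comparison $a_i^w(y) \ge \alpha_{j_0}(y)$. Part (ii) is standard Weyl chamber geometry and requires no constant beyond $1$.
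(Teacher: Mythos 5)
Your proof is correct and follows essentially the same route as the paper: part (i) expands $|x-wy|^2-|x-y|^2=2\langle x,y-wy\rangle$ via Proposition~\ref{XWY} and harvests positivity and integrality of the $a_i^w$, and part (ii) is the standard wall-crossing argument (the paper phrases the second bound via the segment from $y$ to $w^{-1}x$, which is the same observation you reach by orthogonality of $w$). Your write-up is in fact more detailed than the paper's terse justification of (i), but it is the same argument.
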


\begin{proof}
Let the notation be as in the Proposition \ref{XWY}.
By \eqref{CL} and \eqref{useful} we have
\begin{align*}
|x-wy|^2=|x-y|^2+2\,\langle x,y-wy\rangle
=|x-y|^2+2\sum_{i=1}^r\frac{a_i^w(y)\alpha_i(x)}{|\alpha_i|^2},
\end{align*}
and the first inequality follows since not all $a_i^w(y)$ are nul.

The second inequality follows from a simple geometric argument since $x$ and $wy$ are not in the same Weyl chamber, the straight line between $x$ and $wy$ must cross a wall of the Weyl chamber (similarly for $y$ and $w^{-1}\,{x}$).
\end{proof}

\begin{remark}\label{Up}
The linear combination $a_i^w$ only depends on $w\in W$ and therefore the largest coefficient $M$ of a root appearing in any $a_j^w$, for any $j$ or $w$, is finite. 
\end{remark}

\begin{corollary}\label{KillMaxSameWall}
For $z\in\a$, let $\Phi_z=\{\alpha\in\Phi\colon \alpha(z)=0\}$.
Let $x$, $y\in\overline{\a^+}$ such that $\Phi_x\subseteq \Phi_y$.
Suppose that $w\not\in W_y=\{w\in W\colon w\,y=y\}$. Then
\begin{align*}
\langle x,y \rangle > \langle x,wy \rangle.
\end{align*}
\end{corollary}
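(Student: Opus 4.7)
The plan is to prove strict inequality by reducing $wy$ back to $y$ through a finite sequence of simple reflections, tracking how $\langle x,\cdot\rangle$ changes at each step, then invoking $\Phi_x\subseteq\Phi_y$ to force each reflection used in the reduction to fix $y$, and finally deducing $w\in W_y$, contradicting the hypothesis.

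First, set $y_0:=wy$. Since $w\notin W_y$ we have $y_0\ne y$, and since $y$ is the unique element of $W\cdot y$ in $\overline{\a^+}$, the point $y_0$ is not in $\overline{\a^+}$, so some simple root $\alpha_{j_1}$ satisfies $\alpha_{j_1}(y_0)<0$. Inductively, whenever $y_{k-1}\notin\overline{\a^+}$, pick a simple root $\alpha_{j_k}$ with $\alpha_{j_k}(y_{k-1})<0$ and set $y_k:=\sigma_{\alpha_{j_k}}y_{k-1}$. This classical reduction algorithm terminates after finitely many steps at $y_m=y$.

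Second, using $\sigma_\alpha z=z-\frac{2\,\alpha(z)}{|\alpha|^2}A_\alpha$, a direct computation gives
\begin{align*}
\langle x,y_k-y_{k-1}\rangle=-\frac{2\,\alpha_{j_k}(y_{k-1})}{|\alpha_{j_k}|^2}\,\alpha_{j_k}(x)\geq 0,
\end{align*}
with equality iff $\alpha_{j_k}(x)=0$; the inequality uses $\alpha_{j_k}(y_{k-1})<0$ together with $\alpha_{j_k}(x)\geq 0$ (as $x\in\overline{\a^+}$). Telescoping,
\begin{align*}
\langle x,y\rangle-\langle x,wy\rangle=\sum_{k=1}^{m}\langle x,y_k-y_{k-1}\rangle\geq 0,
\end{align*}
with overall equality if and only if $\alpha_{j_k}\in\Phi_x$ for every $k$.

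Finally, if this equality held, then $\Phi_x\subseteq\Phi_y$ would give $\alpha_{j_k}\in\Phi_y$ for every $k$, so each $\sigma_{\alpha_{j_k}}$ would fix $y$ and $u:=\sigma_{\alpha_{j_m}}\cdots\sigma_{\alpha_{j_1}}\in W_y$. But $y=y_m=u(wy)$ gives $uw\in W_y$, whence $w=u^{-1}(uw)\in W_y$, contradicting the hypothesis. I expect the most delicate ingredient to be the termination of the reduction algorithm, but this is a classical fact about finite reflection groups (the number of positive roots sent to negatives strictly decreases at each step), so the argument should go through cleanly.
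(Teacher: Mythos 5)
Your proof is correct and takes a genuinely different route from the paper's. The paper invokes Proposition~\ref{XWY}, writing
$y - wy = \sum_{i=1}^r 2\,\frac{a_i^w(y)}{|\alpha_i|^2}\,A_{\alpha_i}$
with $a_i^w(y)\ge 0$, then shows by contradiction (using $\|wy\|=\|y\|$) that some index $i_0$ has both $\alpha_{i_0}(y)>0$ and $a_{i_0}^w(y)>0$; the hypothesis $\Phi_x\subseteq\Phi_y$ then gives $\alpha_{i_0}(x)>0$, so one term in the nonnegative sum $\langle x,y-wy\rangle=\sum_i 2\,\frac{a_i^w(y)}{|\alpha_i|^2}\,\alpha_i(x)$ is strictly positive. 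You instead construct the difference $y-wy$ step by step via the reduction-to-dominant-chamber algorithm, which produces the same kind of nonnegative decomposition, but with each coefficient built explicitly from the signs $\alpha_{j_k}(y_{k-1})<0$; the strictness is then handled by an equality analysis that pushes $w$ into $W_y$. Your approach is more self-contained (it does not rely on Proposition~\ref{XWY}, whose proof itself invokes the classical inequality of Remark~\ref{scalprod}) and is more algorithmic in flavor; the paper's is shorter because it reuses machinery already proved. Both conclude by deriving a contradiction with $w\notin W_y$, so the structure of the argument is the same at the top level.
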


\begin{proof}
Recall that by Proposition \ref{XWY}
\begin{align*}
y-w\,y=\sum_{i=1}^r \,2\,\frac{a_i^w(y)}{|\alpha_i|^2}\,A_{\alpha_i}
\end{align*}
We first show that there exists $i_0$ such that $\alpha_{i_0}(y)>0$ and $a_{i_0}^w(y) >0$. Suppose by contradiction that this is not true, so
\begin{align*}
y-w\,y=\sum_{\alpha_i(y)=0} \,2\,\frac{a_i^w(y)}{|\alpha_i|^2}\,A_{\alpha_i}
\end{align*}
Then 
\begin{align*}
 \langle y, y-w\,y \rangle=\sum_{\alpha_i(y)=0} \,2\,\frac{a_i^w(y)}{|\alpha_i|^2}\,\alpha_i(y)=0,
\end{align*}
so $\langle y, y \rangle=\langle y, w\,y \rangle$. As $\|wy\|=\|y\|$, this is only possible if $y=wy$.

Since $\alpha_{i_0}\not\in \Phi_y\supseteq\Phi_x$, it follows that 
\begin{align*}
\langle x, y-w\,y \rangle=\sum_{i=1}^r \,2\,\frac{a_i^w(y)}{|\alpha_i|^2}\,\alpha_i(x)\ge 2\,\frac{a_{i_0}^w(y)}{|\alpha_{i_0}|^2}\,\alpha_{i_0}(x) >0.
\end{align*}
\end{proof}

\begin{corollary}\label{alpha k in ak}
Let $y\in \overline{\a^+}$ and $w\in W$. Consider the decomposition \eqref{CL} of $y-wy$. If $a_k^w(y)\not= 0$
then $\alpha_k$ appears in $a_k^w$, i.e. 
$a_k^w=\sum_{i=1}^r n_i \alpha_i$ with $n_k>0$.
\end{corollary}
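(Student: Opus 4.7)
The plan is to isolate the coefficient $n_k$ by pairing the identity \eqref{CL} with the $k$-th fundamental weight $\varpi_k$, defined (inside $\mathrm{span}_\R\Phi\subseteq\a$) by $\alpha_j(\varpi_k) = \delta_{jk}\,|\alpha_k|^2/2$. Since $\alpha_j(\varpi_k)\ge 0$ for every simple root, $\varpi_k$ lies in $\overline{\a^+}$ and is therefore an admissible test vector. Two computations of $a_k^w(\varpi_k)$ — one algebraic, one geometric — will then pin down $n_k$.

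Concretely, taking the inner product of \eqref{CL} with $\varpi_k$ and using $\langle \varpi_k, A_{\alpha_i}\rangle = \alpha_i(\varpi_k)$ kills every term except $i=k$, yielding $a_k^w(y) = \langle \varpi_k, y-wy\rangle$ for every $y\in\a$. Setting $y=\varpi_k$ and exploiting $\|w\varpi_k\| = \|\varpi_k\|$ rewrites the right-hand side as $\tfrac12\|\varpi_k - w\varpi_k\|^2$. On the other hand, writing $a_k^w = \sum_{i=1}^r n_i\,\alpha_i$ and evaluating at $\varpi_k$ directly gives $a_k^w(\varpi_k) = n_k\,|\alpha_k|^2/2$. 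Comparing the two expressions, $n_k = \|\varpi_k - w\varpi_k\|^2/|\alpha_k|^2$, so $n_k=0$ iff $w\varpi_k=\varpi_k$.

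To finish by contraposition, suppose $n_k=0$, so $w\varpi_k=\varpi_k$ and hence $w^{-1}\varpi_k=\varpi_k$ as well. Then the formula $a_k^w(y) = \langle \varpi_k - w^{-1}\varpi_k,\,y\rangle$ shows $a_k^w\equiv 0$ on $\a$, contradicting $a_k^w(y)\ne 0$. I do not foresee any real obstacle; the only mild point to verify is that $\varpi_k$ is available even when $W$ is not effective, which is immediate because $\varpi_k$ is constructed inside $\mathrm{span}_\R\Phi$ using the linear independence of the simple roots, and all inner products above are intrinsic to this subspace.
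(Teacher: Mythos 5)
Your proof is correct and takes a genuinely different route from the paper's. The paper proves the statement by contraposition: it picks test vectors $x_0,y_0$ whose isotropy sets $\Phi_{x_0}=\Phi_{y_0}$ are generated by $\{\alpha_i:i\ne k\}$, observes that $n_k=0$ forces $\langle x_0,y_0-wy_0\rangle=0$, and then invokes Corollary \ref{KillMaxSameWall} to conclude $w\in W_{x_0}$ and hence that $A_{\alpha_k}$ cannot occur in the decomposition. Your argument instead pairs \eqref{CL} against the fundamental weight $\varpi_k$ to extract the closed formula $a_k^w(y)=\langle\varpi_k,\,y-wy\rangle$, and then evaluating at $y=\varpi_k$ gives the explicit nonnegative expression $n_k=\|\varpi_k-w\varpi_k\|^2/|\alpha_k|^2$, after which the contraposition is immediate. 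What your route buys is an exact formula for $n_k$ (rather than a mere nonvanishing statement) and a self-contained computation that avoids Corollary \ref{KillMaxSameWall} entirely; it also makes transparent the equivalence $n_k=0\iff w$ belongs to the parabolic subgroup fixing $\varpi_k$. The only hypothesis you implicitly use beyond what you state is that the functional $a_k^w$ is independent of $y$ once $w$ is fixed (so that a single evaluation at $y=\varpi_k$ determines $n_k$); this is indeed Remark \ref{Up}, so the step is justified. The remark about $\mathrm{span}_\R\Phi$ possibly being proper in $\R^d$ is well taken, and your construction of $\varpi_k$ inside that span avoids any issue there.
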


\begin{proof}
Suppose that 
$a_k^w=\sum_{i=1}^r n_i \alpha_i$ with $n_k=0$. Consider $x_0$, $y_0$ with $\Phi_{x_0}=\Phi_{y_0}$ generated by $\{\alpha_i\colon i\not= k\}$. 
Then, for we have
\begin{align*}
\langle x_0,y_0-w\,y_0\rangle=\sum_{i=1}^r \,2\,\frac{a_i^w(y_0)}{|\alpha_i|^2}\,\alpha_i(x_0)= 2\,\frac{a_k^w(y_0)}{|\alpha_k|^2}\,\alpha_k(x_0)=0
\end{align*}
since $a_k^w(y_0)=\sum_{i\not=k} n_i \alpha_i(y_0)=0$.
By Corollary \ref{KillMaxSameWall}, we see that $w\in W_{x_0}$ with $W_{x_0}$ 
generated by $\sigma_{\alpha_i}$, $i\not =k$. This implies that
$A_{\alpha_k}$ does not appear in the decomposition 
 \eqref{CL} of $y-wy$, thus, equivalently that $a_k^w=0$.
\end{proof}

\begin{remark}\label{C}
There exist constants $C_1$ and $C_2$ (which are independent of $\alpha$, $x$ and $w$), such that
for
any two roots $\alpha,\beta$, every $x\in\R^d$ and any $w\in W$,
\begin{align}\label{C1C2}
|\alpha(x)|\leq C_1\,|x|\quad {\rm and}\quad |\partial_\alpha^x(\beta(w\,x))|\leq C_2.
\end{align}
\end{remark}
 
\subsection{Basic root subsystems}

\begin{definition}\label{basicsub}
A subsystem of a root system is a subset of the root system which is stable under the reflections with respect to the roots in the subset. 

We will say that $\Phi'$ is a root subsystem of $\Phi$ generated by the set $S\subseteq \Phi$ if it is the smallest subsystem of $\Phi$ containing $S$.

We will say that $\Phi'$ is a {\bf basic} root subsystem of $\Phi$ if it is generated by simple roots of $\Phi$.

 The {\bf rank} of $\Phi'$ is defined as the number of simple roots in $\Phi'$.

To simplify the exposition, we will automatically associate the following objects to a basic root subsystem $\Phi'$ of $\Phi$: let $W'$ be the group generated by $\sigma_{\alpha}$, $\alpha\in\Phi'$ and consider the polynomial $\pi'(x)=\prod_{\alpha\in\Phi'}\,\alpha(x)$.  We also denote 
$W_0=W\setminus W'$, $\Phi_0=\Phi\setminus \Phi'$ and let $\Phi_0^+$ stand for the positive roots in $\Phi_0$. When $\Phi'=\emptyset$, we will set $W'=\{id\}$ and therefore $\Phi_0=\Phi$ and $W_0=W\setminus\{id\}$.
\end{definition}

As an example, we present now the basic root subsystems in the $A_n$ case.

\begin{example}
If $\Phi'$ is a basic root subsystem of $A_n$ with simple roots $\alpha_i(x)=x_i-x_{i+1}$, $1\leq i\leq n$, then the set of simple roots in $\Phi'$ can be written as
\begin{align*}
\bigcup_{k=1}^M\,\{\alpha_{i_k},\alpha_{i_k+1},\dots,\alpha_{i_k+j_k-1}\}
\end{align*}
with $i_k+j_k<i_{k+1}$ whenever $1\leq k\leq M-1$. This means that
$
\Phi'\simeq\bigoplus_{k=1}^M\,A_{j_k}.
$
The rank of $\Phi'$ is $\sum_{k=1}^M\,j_k$.
\end{example}


\section{Estimates of the Poisson kernel in the Dunkl setting}\label{Dunkl setting}

In this section, we present results that are true in the general invariant Dunkl setting, without any restriction on the multiplicity function $k$.

\subsection{Framing bounds for the Dunkl-Poisson kernel}\label{framing}

Observe that, by \eqref{PoissonDunkl}, {for $x\in B$ and $y\in S$,}
\begin{align}
P_k(x,y)
=C\,V_k\left(\frac{1-|x|^2}{(|x-y|^2+2\,\langle x,y-\cdot\rangle)^{\kappa+d/2}}\right)(y).\label{F3}
\end{align}

From \eqref{F3} and \eqref{intertwiner}, we deduce the following bounds.

\begin{proposition}\label{ENC}
Let $w_{max}\in W$ be such that $|x-w_{max}y|=\max_{w\in W} |x-wy|$.
There exists $C>0$ such that for any $x\in B^+$, $y\in S^+$,
\begin{align}
\frac{C{(1-|x|^2)}}{|x-w_{max}y|^{d+2\,\kappa}}\leq P^W(x,y)\leq \frac{C{(1-|x|^2)}}{|x-y|^{d+2\,\kappa}},
\label{bounds}
\end{align}
for all $x\in B^+$ and $y\in S^+$.
\end{proposition}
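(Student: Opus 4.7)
The plan is to combine the averaging identity \eqref{summation},
\begin{align*}
P^W(x,y) = \frac{1}{|W|}\sum_{w\in W} P_k(x,w\,y),
\end{align*}
with the rewritten Dunkl formula \eqref{F3} and the R\"osler representation \eqref{intertwiner}. This expresses each summand as
\begin{align*}
P_k(x,w\,y) = C\int_{C(w\,y)} \frac{1-|x|^2}{(|x-w\,y|^2 + 2\,\langle x,w\,y-z\rangle)^{\kappa+d/2}}\, d\mu_{w\,y}^k(z),
\end{align*}
and expanding the scalar product in the denominator collapses it to $|x|^2 + 1 - 2\,\langle x,z\rangle$, since $|w\,y|=|y|=1$ for $y\in S^+$. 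The whole problem is thus reduced to controlling $\langle x,z\rangle$ on the support of each R\"osler measure.

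The key step is the following pair of observations. First, the convex hull $C(w\,y)=\mathrm{conv}(W\cdot y)$ is the \emph{same} set for every $w\in W$, since the Weyl orbit is $W$-stable; hence every $z$ appearing in any of the integrands lies in $\mathrm{conv}(W\cdot y)$, and by convexity
\begin{align*}
\min_{w'\in W}\langle x,w'\,y\rangle \le \langle x,z\rangle \le \max_{w'\in W}\langle x,w'\,y\rangle.
\end{align*}
Second, Remark \ref{scalprod} applied with $x,y\in\overline{\a^+}$ tells us that the maximum over the orbit is attained at $w'=\mathrm{id}$ and the minimum at $w'=w_{max}$. Rewriting via $|x-w'\,y|^2 = |x|^2 + 1 - 2\,\langle x,w'\,y\rangle$ yields the uniform bracket
\begin{align*}
|x-y|^2 \le |x|^2 + 1 - 2\,\langle x,z\rangle \le |x-w_{max}\,y|^2
\end{align*}
that holds for every $w$ and every $z$ in the support of $\mu_{w\,y}^k$.

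Inserting this bracket into the integrand, using that each R\"osler measure $\mu_{w\,y}^k$ has mass one, and then averaging over $w\in W$ immediately produces both inequalities in \eqref{bounds} with a single constant. There is no real obstacle in the argument; each step is a direct inequality. The essential ingredients are just Remark \ref{scalprod}, which pins down the extremal points of the orbit, together with the elementary but crucial fact that the support $C(w\,y)$ of the R\"osler measure depends only on the orbit $W\cdot y$ and not on the chosen representative $w\,y$.
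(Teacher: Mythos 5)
Your proof is correct and takes essentially the same approach as the paper: rewrite the Dunkl--R\"osler integral so the denominator becomes an affine function of the integration variable $z$, note that $z$ ranges over the convex polytope $C(y)$ whose vertices lie in $W\cdot y$, and then pin down the extreme values via Remark \ref{scalprod}, identifying them with $|x-y|^2$ and $|x-w_{max}y|^2$. The only difference is that you make explicit the averaging step $P^W = \frac{1}{|W|}\sum_w P_k(x,wy)$ together with the observation that $C(wy)=C(y)$ and that the rewritten denominator $1+|x|^2-2\langle x,z\rangle$ depends on $z$ alone, so every summand obeys the same bracket; the paper leaves this reduction from $P_k$ to $P^W$ implicit.
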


\begin{proof}
The map $z\mapsto |x-y|^2+2\langle x,y-z\rangle$ is linear in $z\in C(y)$, the convex hull of $W\cdot y$. Therefore it attains its maximum and its minimum at points in $W\cdot y$. When $z=wy$, the denominator $|x-y|^2+2\langle x,y-z\rangle$ in \eqref{F3} equals
\begin{align}\label{useful}
|x-y|^2+2\langle x,y-w\,y\rangle=|x-wy|^2.
\end{align}

 The smallest value is $|x-y|^2$ and the largest is $|x-w_{max}y|^2$.
\end{proof}

\begin{remark} \label{generalDunkl}
Proposition \ref{ENC} also holds in the general Dunkl non invariant case. This is due to
\eqref{intertwiner} and \eqref{F3}. An analogous estimate of the Newton kernel was obtained in \cite[Proposition 6.3 (6.6)]{Gallardo2}.
\end{remark}

\begin{proposition}\label{smallgamma}
Let $D>0$. Then there exists $C>0$ independent of $D$ such that
\begin{align*}
C\,(1+4\,r\,MD\,(D+\max_{\alpha>0}\,|\alpha|))^{-d/2-\kappa}\leq \frac{P^W(x,y)}{\Omega(x,y)}\leq C\,(1+D)^{2\,\kappa}
\end{align*}
for all $x\in B^+$ and $y\in S^+$ with $\alpha(y)\leq D|x-y|$ for every $\alpha>0$.
\end{proposition}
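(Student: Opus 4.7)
The natural starting point is Proposition \ref{ENC}, which gives the two-sided framing
\[
\frac{C(1-|x|^2)}{|x-w_{\max}y|^{d+2\kappa}} \le P^W(x,y) \le \frac{C(1-|x|^2)}{|x-y|^{d+2\kappa}}.
\]
Dividing through by $\Omega(x,y)$ cancels the $(1-|x|^2)$ factor, so the whole proof reduces to controlling the two ratios
\[
\frac{\prod_{\alpha>0}|x-\sigma_\alpha y|^{2k(\alpha)}}{|x-y|^{2\kappa}} \quad\text{(upper bound)}\qquad\text{and}\qquad \frac{|x-y|^d\prod_{\alpha>0}|x-\sigma_\alpha y|^{2k(\alpha)}}{|x-w_{\max}y|^{d+2\kappa}}\quad\text{(lower bound)}
\]
in terms of the ratio $\alpha(y)/|x-y|$, which by hypothesis is at most $D$.

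For the upper bound I would apply Lemma \ref{basic simple}(ii): $|x-\sigma_\alpha y|^2 = |x-y|^2 + (4/|\alpha|^2)\alpha(x)\alpha(y)$. Since $x, y\in\overline{\a^+}$, and since $\alpha(x)\le \alpha(y)+|\alpha|\,|x-y|\le (D+\max_\beta|\beta|)|x-y|$, combining with $\alpha(y)\le D|x-y|$ and $|\alpha|\ge 1$ gives
\[
|x-\sigma_\alpha y|^2 \le |x-y|^2\bigl(1+4D(D+\max_\beta|\beta|)\bigr) \le C(1+D)^2\,|x-y|^2,
\]
after absorbing the constant $\max_\beta|\beta|$ into $C$. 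Taking the weighted product over positive roots produces the factor $(1+D)^{2\kappa}$ claimed.

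For the lower bound I would use the identity \eqref{useful} combined with Proposition \ref{XWY}, which says
\[
|x-wy|^2 = |x-y|^2 + 2\sum_{i=1}^r \frac{a_i^w(y)\alpha_i(x)}{|\alpha_i|^2},
\]
with $a_i^w$ a non-negative integer combination of simple roots whose coefficients are bounded by the universal constant $M$ of Remark \ref{Up}. Thus $a_i^w(y)\le M\sum_j\alpha_j(y)\le rMD|x-y|$. Using again $\alpha_i(x)\le(D+\max_\beta|\beta|)|x-y|$ and $|\alpha_i|\ge 1$, these bounds give uniformly in $w$
\[
|x-w_{\max}y|^2 \le |x-y|^2 \bigl(1+C\,rMD(D+\max_\beta|\beta|)\bigr).
\]
Finally, Lemma \ref{basic simple}(ii) and $\alpha(x)\alpha(y)\ge 0$ yield the trivial lower bound $\prod_\alpha |x-\sigma_\alpha y|^{2k(\alpha)}\ge |x-y|^{2\kappa}$. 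Substituting both estimates into the $P^W/\Omega$ ratio gives the announced lower bound $C\bigl(1+C'\,rMD(D+\max_\beta|\beta|)\bigr)^{-d/2-\kappa}$.

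I do not anticipate a genuine obstacle: the whole argument is a packaging of Proposition \ref{ENC} together with the elementary root identities of Section \ref{sec:roots}. The only mildly delicate point is the passage from the hypothesis $\alpha(y)\le D|x-y|$ (which is a constraint only on $\alpha(y)$) to an analogous control of $\alpha(x)$; this is handled at no cost by the Lipschitz bound $\alpha(x)\le\alpha(y)+|\alpha|\,|x-y|$, which is where the $\max_\beta|\beta|$ in the statement originates.
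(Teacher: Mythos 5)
Your proposal is correct and follows essentially the same route as the paper: frame $P^W$ via Proposition \ref{ENC}, bound $|x-w_{\max}y|^2$ using \eqref{useful} together with Proposition \ref{XWY} and Remark \ref{Up}, bound $|x-\sigma_\alpha y|$ using $\alpha(x)\le(D+\max_\beta|\beta|)|x-y|$, and use the trivial lower bound $|x-\sigma_\alpha y|\ge|x-y|$. The only cosmetic difference is that the paper passes through $\phi_\alpha(x,y)\le(1+D)|x-y|$ for the upper bound rather than Lemma \ref{basic simple}(ii), and the multiplicative constants come out slightly differently (e.g.\ your $r^2M$ vs.\ the paper's $rM$), but this only changes the harmless $D$-independent prefactor $C$, not the structure of the bound.
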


\begin{proof}
Note that {by Lemma \ref{basic simple}(i),} $\alpha(x)=|\alpha|\,d(x,H_\alpha)\leq |\alpha|(|x-y|+d(y,H_\alpha))
=|\alpha||x-y|+\alpha(y)$ so $\alpha(x)\leq |x-y|\,\max_{\alpha>0}|\alpha|+D\,|x-y|=D'\,|x-y|$.
Using {\eqref{useful}, Proposition \ref{XWY} and Remark \ref{Up}}, 
\begin{align*}
|x-wy|^2&=|x-y|^2+2\langle x,y-wy\rangle = |x-y|^2+2\sum_{i=1}^r2\,a_i^w(y)\alpha_i(x)/|\alpha_i|^2\\
&\leq |x-y|^2+4\,r\,M\,\max_{\alpha>0}\,\alpha(y)\,\max_{\alpha>0}\,\alpha(x)
\leq (1+4\,r\,MD\,D')|x-y|^2.
\end{align*}
{
The result then follows from \eqref{bounds} in Proposition \ref{ENC}, noting that for all $\alpha>0$ we have $|x-y|\leq\phi_\alpha(x,y)\leq (1+D)\,|x-y|$.}
\end{proof}

\begin{lemma}\label{A1Rd}
Let $E=\{ (x,y)\in\a\times\a \ |\ \alpha(x)\,\alpha(y)>0 \}$. Consider
\begin{align*}
T_1(x,y)=\frac{\frac{1}{|x-y|^d}-\frac{1}{|x-\sigma_\alpha y|^d}}{\alpha(x)\,\alpha(y)}
\end{align*}
on $E$. Then, on $E$, the following estimate holds
\begin{align*}
T_1(x,y)\asymp\frac{1}{|x-y|^d\,|x-\sigma_\alpha y|^2}.
\end{align*}
\end{lemma}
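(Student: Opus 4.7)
My approach is to reduce the estimate to a single-variable calculus fact, exploiting the quadratic identity from Lemma \ref{basic simple}(ii). The key observation is that the numerator of $T_1$ is a difference of the form $f(a) - f(b)$ for $f(s) = s^{-d/2}$, while the denominator $\alpha(x)\alpha(y)$ is, up to a constant, exactly $b - a$. So $T_1$ is essentially a divided difference of $f$, and the target upper bound involves evaluating $f$-like quantities at the two endpoints.

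Concretely, I would first set $a := |x-y|^2$ and $b := |x-\sigma_\alpha y|^2$ and invoke Lemma \ref{basic simple}(ii) to write $b - a = (4/|\alpha|^2)\,\alpha(x)\alpha(y)$. On the set $E$ the right-hand side is strictly positive, so $b > a > 0$ (assuming $x \ne y$; the case $x=y$ is a trivial blow-up on both sides). This turns the expression into
\begin{align*}
T_1(x,y) = \frac{4}{|\alpha|^2}\cdot\frac{a^{-d/2} - b^{-d/2}}{b-a}.
\end{align*}
Factoring the right-hand side by introducing $u := a/b \in (0,1)$ gives
\begin{align*}
\frac{a^{-d/2} - b^{-d/2}}{b-a} \;=\; \frac{1}{a^{d/2}\,b}\cdot h(u),\qquad h(u) := \frac{1 - u^{d/2}}{1-u}.
\end{align*}
Since $a^{d/2}\,b = |x-y|^d\,|x-\sigma_\alpha y|^2$, the lemma will follow once $h$ is shown to be bounded between two positive constants on $(0,1)$.

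The step I expect to be the only real point of care is verifying the two-sided bound on $h$, and this is a short one-variable argument: $h$ is continuous on $(0,1)$, and by L'Hopital (or a Taylor expansion) $\lim_{u\to 1^-} h(u) = d/2$, while $\lim_{u\to 0^+} h(u) = 1$. Thus $h$ extends continuously to $[0,1]$ with strictly positive values, hence is bounded above and below by positive constants depending only on $d$. Combined with the factor $4/|\alpha|^2$ in front of $T_1$, which contributes only a constant (the roots $\alpha$ range over the finite root system $\Phi$), this yields $T_1(x,y) \asymp 1/(|x-y|^d\,|x-\sigma_\alpha y|^2)$ on $E$, as required. No geometric input beyond Lemma \ref{basic simple}(ii) is needed.
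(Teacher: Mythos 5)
Your proof is correct. Both your argument and the paper's hinge on the same two ingredients: the identity $|x-\sigma_\alpha y|^2 = |x-y|^2 + (4/|\alpha|^2)\,\alpha(x)\alpha(y)$ from Lemma \ref{basic simple}(ii), and the observation that $T_1$ is a divided difference. Where you diverge is in how you exploit the factorization: the paper first multiplies numerator and denominator by the conjugate $|x-\sigma_\alpha y|^d + |x-y|^d$ to produce $(|x-\sigma_\alpha y|^2)^d - (|x-y|^2)^d$, then applies the algebraic identity $a^d - b^d = (a-b)\sum_{k=0}^{d-1} a^k b^{d-1-k}$ and bounds the resulting sum termwise using $|x-y|\le|x-\sigma_\alpha y|$. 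You instead normalize by $b = |x-\sigma_\alpha y|^2$, reduce everything to the single-variable function $h(u) = (1-u^{d/2})/(1-u)$ on $(0,1)$, and invoke continuity and positive endpoint limits to conclude $h \asymp 1$. The two routes are very close in spirit (your $h(u)$ is, after clearing denominators, essentially the paper's telescoping sum), but your normalization is marginally cleaner and has the side benefit of working verbatim for non-integer exponents, whereas the paper's explicit $d$-term sum presupposes $d \in \mathbb{N}$; conversely, the paper's version is fully elementary and avoids appealing to L'Hopital or compactness. Both are complete proofs.
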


\begin{proof}
Recall that, by Lemma \ref{basic simple}(ii), $|x-\sigma_\alpha y|^2=|x-y|^2+C\,\alpha(x)\,\alpha(y)$ where $C=4/|\alpha|^2$.
Using the formula $a^d-b^d=(a-b)\,\sum_{k=0}^{d-1}\,a^k\,b^{d-1-k}$,
and the fact 
that $|x-y|^d\leq |x-\sigma_\alpha y|^d$ on $E$,
we have
\begin{align*}
T_1(x,y)&=\frac{(|x-\sigma_\alpha y|^2)^d-(|x-y|^2)^d}{\alpha(x)\,\alpha(y)\,|x-y|^d\,|x-\sigma_\alpha y|^d\,(|x-y|^d+|x-\sigma_\alpha y|^d)}\\
&=C\,\frac{\sum_{k=0}^{d-1}\,|x-\sigma_\alpha y|^{2\,k}\,|x-y|^{2\,(d-1-k)}}{|x-y|^d\,|x-\sigma_\alpha y|^d\,(|x-y|^d+|x-\sigma_\alpha y|^d)}\\
&\asymp \frac{|x-\sigma_\alpha y|^{2\,(d-1)}}{|x-y|^d\,|x-\sigma_\alpha y|^{2\,d}}=\frac{1}{|x-y|^d\,|x-\sigma_\alpha y|^2}.
\end{align*}
\end{proof}

\section{{The conjecture for complex root systems}}\label{sec:complex}

\subsection{Structure of the proof of Conjecture \ref{TL}}

When the root system is complex, i.e. all the multiplicities $k(\alpha)=1$, by Proposition \ref{altern},
we have the alternating sum formula for the $W$-invariant Poisson-Dunkl kernel  at our disposal, see also \cite{PGPS}.
We have, for any $x\in B^+$, $y\in S^+$,
\begin{align}\label{alt}
\frac{P^W(x,y)}{1-|x|^2}=\frac1{|W|w_d}\frac{1}{\pi(x)\,\pi(y)}\,\left[\sum_{w\in W}\,\frac{\epsilon(w)}{|x-w\,y|^d}\right].
\end{align}
For a fixed basic root subsystem $\Phi'$, it is natural to decompose the alternating sum
\begin{align*}
\sum_{w\in W}\,\frac{\epsilon(w)}{|x-w\,y|^d}=
\sum_{w\in W'}\,\frac{\epsilon(w)}{|x-w\,y|^d}+
\sum_{w\in W_0}\,\frac{\epsilon(w)}{|x-w\,y|^d},
\end{align*}
and to apply an induction argument to the ``main term''
$\sum_{w\in W'}\frac{\epsilon(w)}{|x-w\,y|^d}$.
This will require a detailed analysis of the remainder term
$\sum_{w\in W_0}\,\frac{\epsilon(w)}{|x-w\,y|^d}$.
The choice of the basic root subsystem $\Phi'$
and further analysis of the main term and the remainder,
will be done by considering $(x,y)$ in the fixed subregions \eqref{AA} defined in Lemma \ref{reduc}.

\subsection{Subregions}

Let $N$ denote the maximal length of the positive roots in $\Phi$.  For example, if the root system is $A_n$ then $N=n$.
\begin{lemma}\label{reduc}
Consider a basic root subsystem $\Phi$ and fix $c>0$. Then, given $x$ and $y \in\overline{\a^+}$, it is possible to build a basic root subsystem $\Phi'$ of $\Phi$ and $c'$ with $c/N^r\leq c'\leq c$ where $r$ is the rank of $\Phi$
such that
\begin{align}
 c'\,\alpha(y)< |x-y|<c'\,\beta(y)\label{AA}
\end{align}
for every $\alpha\in \Phi'_+$ and every $\beta\in \Phi_0^+$ (this includes the possibility that $\Phi'=\emptyset$).
\end{lemma}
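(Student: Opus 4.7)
The plan is to recast the claim in quantitative form: with $\rho = |x-y|$, define $T_j = N^j \rho/c$ for $j = 0, 1, \ldots, r$ and $v_i = \alpha_i(y) \geq 0$ for the simple roots $\alpha_1, \ldots, \alpha_r$ of $\Phi$. Finding a $c' \in [c/N^r, c]$ satisfying \eqref{AA} amounts to producing a subset $S \subseteq \{1, \ldots, r\}$ and a threshold $T \in [T_0, T_r]$ such that the basic subsystem $\Phi' := \langle S \rangle$ satisfies $\alpha(y) < T$ for every $\alpha \in \Phi'_+$ and $\beta(y) > T$ for every $\beta \in \Phi_0^+$; one then sets $c' = \rho/T$.

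The main device will be the nested family $S_j := \{i : v_i < T_j\}$, $j = 0, 1, \ldots, r$. Since $|S_0| \leq |S_1| \leq \cdots \leq |S_r|$ is a non-decreasing sequence of $r+1$ integers in $\{0, 1, \ldots, r\}$, pigeonhole splits the argument into two cases: (A) there is $j_0 \leq r-1$ with $|S_{j_0}| = |S_{j_0+1}|$, or (B) the sequence is strictly increasing, forcing $|S_j| = j$ throughout.

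In Case (A), no simple root takes value in $[T_{j_0}, T_{j_0+1})$; I would take $S = S_{j_0}$. Any $\alpha = \sum n_i \alpha_i \in \Phi'_+$ has $\sum n_i \leq N$ and $v_i < T_{j_0}$ for $i \in S$, hence $\alpha(y) < N T_{j_0} = T_{j_0+1}$; and any $\beta \in \Phi_0^+$ carries a simple-root summand $\alpha_l$ with $l \notin S$, so $\beta(y) \geq v_l \geq T_{j_0+1}$. Any $T \in (M_S, T_{j_0+1})$, where $M_S := \max_{\alpha \in \Phi'_+} \alpha(y) < T_{j_0+1}$, then produces both strict inequalities, and $T_0 \leq T_{j_0} < T_{j_0+1} \leq T_r$ ensures $T \in [T_0, T_r]$.

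In Case (B), after sorting $v_{(1)} \leq \cdots \leq v_{(r)}$ the equalities $|S_j| = j$ force $v_{(j)} \in [T_{j-1}, T_j)$, and in particular $v_{(1)} < T_1 \leq v_{(2)}$ when $r \geq 2$. Taking $S = \{(1)\}$ so that $\Phi'$ is of rank one, $M_S = v_{(1)}$ and $\min_{\beta \in \Phi_0^+} \beta(y) = v_{(2)}$, so any $T$ in the open gap $(v_{(1)}, v_{(2)})$, automatically contained in $[T_0, T_r]$, works. The degenerate ranks $r = 0$ (vacuous) and $r = 1$ (use $S = \{1\}$, $T \in (v_1, T_1]$) are checked directly. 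The main obstacle will be the uniform application of the height bound $\sum n_i \leq N$ in Case (A): it controls $M_S$ within every basic subsystem $\Phi'(S)$ and is exactly what forces the exponent $N^r$ in the allowed window for $c'$.
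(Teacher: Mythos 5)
Your proof is correct and takes a genuinely different route from the paper's. The paper argues iteratively: starting from the smallest threshold $c_0 = c/N^r$ (which corresponds to your largest cutoff $T_r$), it builds $\Phi'$ from the simple roots $\alpha_i$ satisfying $c_0\alpha_i(y)<|x-y|$, and whenever some composite root of $\Phi'$ violates the bound it multiplies $c_0$ by $N$; the height bound guarantees that at least one generator drops out at each rescaling, so the descent terminates within $r$ steps. You instead lay out the entire geometric scale $T_0<\cdots<T_r$ at once, exploit the nesting of the sets $S_j$, and invoke pigeonhole to find either a gap containing no simple-root value (Case A) or the forced pattern $|S_j|=j$ (Case B). Both arguments pivot on the same observation --- the height bound $\sum n_i\le N$ is exactly what separates composite root values $\alpha(y)$, $\alpha\in\Phi'_+$, from the simple-root values feeding them --- but your static case split makes termination tautological rather than algorithmic, which is arguably cleaner. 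One small imprecision in Case A: the claim that $T_0\le T_{j_0}<T_{j_0+1}\le T_r$ by itself ensures $T\in[T_0,T_r]$ is loose, since $(M_S,T_{j_0+1})$ may dip below $T_0$ when $M_S<T_0$; you should take $T$ in the non-empty intersection $(M_S,T_{j_0+1})\cap[T_0,T_r]$, which works because $T_{j_0+1}\ge T_1>T_0$. (Both your proof and the paper's silently require $N\ge 2$ for the open intervals to be non-degenerate; when $N=1$ the window $[c/N^r,c]$ collapses to a point and the strict inequalities can fail on a null set, but this does not affect the applications.)
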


\begin{proof}

Let $c_0=\frac{c}{N^r}$ and let $S_0$ be the set of simple roots $\alpha_i$ such that
such that 
\begin{align}
c_0\,\alpha_i(y)< |x-y|.\label{chain}
\end{align}

    Let $\Phi'$ be generated by the simple roots such that \eqref{chain} holds (there are at most $r$ such simple roots).  If 
$c_0\,\alpha(y)< |x-y|$ for every $\alpha\in \Phi'$ then
$\Phi'$ satisfies the condition \eqref{AA} with $c'=c_0$ except perhaps $c_0\,\beta(y)=|x-y|$ for some $\beta\in \Phi_0^+$. In that case, we choose $0<\delta<c-c_0$ such that the inequalities $ (c_0+\delta)\,\alpha(y)< |x-y|$ hold for every $\alpha\in \Phi'_+$.  We then take $c'=c_0+\delta$.

Suppose $c_0\,\alpha(y)\geq |x-y|$ for some $\alpha\in \Phi'$. In that case since $\alpha$ is not one of the $\alpha_i$'s from \eqref{chain}, it is not simple and we 
have $c_0\,\alpha_j>|x-y|/N$ for some $j\in S_0$. We set $c_1=N\,c_0$ and repeat the process.

Let $S_1$ be the set of simple roots $\alpha_i$ such that
such that 
\begin{align}
c_1\,\alpha_i(y)< |x-y|.\label{chain2}
\end{align}

Let $\Phi'$ be generated by the simple roots such that \eqref{chain2} holds (there are at most $r-1$ such simple roots).  If 
$c_1\,\alpha(y)< |x-y|$ for every $\alpha\in \Phi'$ then
$\Phi'$ satisfies the condition \eqref{AA} with $c'=c_1$ (if $c_1\,\beta(y)=|x-y|$ for some $\beta\in\Phi_0$, we proceed as in the previous step).

Suppose $c_1\,\alpha(y)\geq |x-y|$ for some $\alpha\in \Phi'$. In that case, we 
have $c_1\,\alpha_j(y)>|x-y|/N$ for some $j\in S_1$. We set $c_2=N\,c_1$ and repeat the process.

Since we decrease the number of simple roots involved each time, the process must stop eventually either because we have achieved \eqref{AA} or because we ran out of simple roots.
\end{proof}

\begin{remark}
In Lemma \ref{reduc}, we do not exclude the possibility that $\Phi'$ might be empty (with $\Phi_0=\Phi$ and $W_0=W\setminus\{id\}$) or, at the other extreme, that $\Phi'=\Phi$ (with $\Phi_0=\emptyset$ and $W_0=\emptyset$).
\end{remark}

The subregions defined in Lemma \ref{reduc} will be denoted by
\begin{align*}
S_{\Phi',c}=\{(x,y)\in B^+\times S^+\colon c\,\alpha(y)< |x-y| < c\,\beta(y)
\ {\rm for\ all}\ \alpha\in \Phi'_+, \beta\in \Phi_0^+
\}
\end{align*}
The subregions $S_{\Phi',c}$ will play a crucial role in the proof of Conjecture \ref{TL}.

\subsection{Estimates of the remainder}
This subsection is devoted to the estimates of the remainder term $\sum_{w\in W_0}\,\frac{\epsilon(w)}{|x-w\,y|^d}$. We begin with lower estimates of the expressions $|x-w\,y|$ when
$w\in W_0$.

\begin{lemma}\label{blocks}
Let $\Phi'$ be a basic root subsystem of $\Phi$ with $\Phi'\not=\Phi$, $x$, $y\in\overline{\a^+}$ and $w_0\in W_0$. Then there exists a simple root $\alpha\in\Phi_0^+$ such that
$|x-w\,y|^2\geq K_0\,\alpha(x)\,\alpha(y)$
where $K_0=\max\,\{1/|\alpha|^2\colon \alpha\in\Phi_+\}$.
\end{lemma}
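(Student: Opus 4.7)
The plan is to expand $|x-w_0 y|^2$ via \eqref{useful} and the decomposition \eqref{CL} from Proposition \ref{XWY}, isolate a favourable single summand in the resulting sum, and use the hypothesis $w_0 \in W_0$ to ensure that this summand is non-trivial. Combining those two identities gives
\begin{align*}
|x - w_0 y|^2 \;=\; |x - y|^2 + \sum_{i=1}^r \frac{4\, a_i^{w_0}(y)\,\alpha_i(x)}{|\alpha_i|^2}.
\end{align*}
Since $x, y \in \overline{\a^+}$ and each $a_i^{w_0}$ is a non-negative integer combination of simple roots, every summand is non-negative; dropping $|x-y|^2$ and keeping only the $k$-th term yields $|x - w_0 y|^2 \geq (4/|\alpha_k|^2)\,a_k^{w_0}(y)\,\alpha_k(x)$ for any $k$. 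By Corollary \ref{alpha k in ak}, whenever $a_k^{w_0}$ is not identically zero as a polynomial, $\alpha_k$ appears in it with integer coefficient at least $1$, hence $a_k^{w_0}(y) \geq \alpha_k(y)$ on $\overline{\a^+}$. Provided such $\alpha_k \in \Phi_0^+$ can be found, we obtain
\begin{align*}
|x - w_0 y|^2 \;\geq\; \frac{4}{|\alpha_k|^2}\,\alpha_k(x)\,\alpha_k(y) \;\geq\; K_0\, \alpha_k(x)\, \alpha_k(y),
\end{align*}
the last inequality using the crystallographic bound on root-length ratios together with $|\alpha|\geq 1$.

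The main step, and expected obstacle, is to exhibit a simple root $\alpha_k \in \Phi_0^+$ with $a_k^{w_0}\not\equiv 0$. I would argue by contradiction: assume $a_k^{w_0}\equiv 0$ for every simple $\alpha_k\in\Phi_0^+$. Then \eqref{CL} forces $y - w_0 y \in \mathrm{span}(\Phi')$ for every $y \in \a$. Orthogonally decomposing $\a=\mathrm{span}(\Phi')\oplus V$ and writing, for $y \in V$, $w_0 y = p+q$ with $p\in\mathrm{span}(\Phi')$ and $q\in V$, the condition $w_0 y - y \in \mathrm{span}(\Phi')$ forces $q = y$; then $|w_0 y|^2=|y|^2$ combined with the Pythagorean identity $|w_0 y|^2=|p|^2+|y|^2$ yields $p=0$. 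Hence $w_0|_V=\mathrm{id}$.

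Finally, since $\Phi'$ is a basic (parabolic) subsystem, one has $\Phi\cap\mathrm{span}(\Phi')=\Phi'$, and the classical fact about parabolic subgroups of Weyl groups states that the pointwise stabilizer in $W$ of $V$ coincides with $W'=W(\Phi')$ — indeed, the isotropy group in $W$ of a generic point of $V$ is generated by the reflections in the roots vanishing on $V$, which are precisely the roots in $\Phi'$. Therefore $w_0\in W'$, contradicting $w_0\in W_0$. This structural Weyl-group fact is the only non-elementary ingredient; everything else is a direct manipulation of the formulas established in Section \ref{sec:roots}.
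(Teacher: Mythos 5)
Your proposal is correct, and the core computation — expand $|x - w_0 y|^2$ via \eqref{useful} and \eqref{CL}, drop the non-negative terms you don't need, and invoke Corollary \ref{alpha k in ak} to get $a_k^{w_0}(y)\geq\alpha_k(y)$ — matches the paper exactly. The genuine difference is how you establish that some $\alpha_k\in\Phi_0^+$ has $a_k^{w_0}\not\equiv 0$. The paper argues by specializing to $x,y$ with $\Phi_x=\Phi_y=\Phi'$, showing $\langle x,y-w_0 y\rangle=0$, and then deducing $w_0\in W'=W_y$ from Corollary \ref{KillMaxSameWall} — a fully internal argument using the machinery already built in Section \ref{sec:roots}. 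You instead show that if all $a_k^{w_0}$ vanish identically for simple $\alpha_k\in\Phi_0^+$, then $y-w_0 y\in\mathrm{span}(\Phi')$ for every $y$, hence $w_0$ fixes $V=\mathrm{span}(\Phi')^\perp$ pointwise, and then appeal to the classical Steinberg/Chevalley theorem (pointwise stabilizer of a subspace is generated by the reflections in the roots vanishing there) together with the parabolic fact $\Phi\cap\mathrm{span}(\Phi')=\Phi'$. This is a valid alternative; it is more "structural" and imports a standard result from reflection-group theory, whereas the paper's route stays inside its own lemmas — in fact your argument essentially reproves the special case of Steinberg's theorem that Corollary \ref{KillMaxSameWall} encapsulates. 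One side remark: the final inequality $\frac{4}{|\alpha_k|^2}\geq K_0$ does not follow from $|\alpha|\geq 1$ alone without an upper bound on $|\alpha_k|$ (and the "crystallographic bound on length ratios" only controls ratios within an irreducible component); but this same tension is present in the paper's own constant $K_0=\max\{1/|\alpha|^2\}$, which appears to be a slip for $\min$ — with $K_0=\min\{1/|\alpha|^2\}=1/\max|\alpha|^2$ the inequality $4/|\alpha_k|^2\geq K_0$ is immediate. So I would not count that as a gap attributable to you.
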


\begin{proof}
We use the decomposition 
\eqref{CL} of $y-wy$. Let
$\alpha_i$, $i=1,\ldots, s=| \Phi'_+|$ be the simple positive roots from $\Phi'$ and $\alpha_{s+j}$ the simple positive roots from $\Phi_0^+$. We have
\begin{align}\label{TwoSums}
y-w\,y=\sum_{i=1}^s \,2\,\frac{a_i^w(y)}{|\alpha_i|^2}\,A_{\alpha_i} + \sum_{j=1}^{r-s} \,2\,\frac{a_{s+j}^w(y)}{|\alpha_{s+j}|^2}\,A_{\alpha_{s+j}} 
\end{align}

In order to prove the lemma it is sufficient to show
that the second sum does not vanish since by Corollary \ref{alpha k in ak}, a nonzero function $a_{s+j}^w$ has a nonzero term
$n\, \alpha_{s+j}$.

Suppose by contradiction that the second sum vanishes.
Choose $x$ and $y$ with $\Phi_x=\Phi_y=\Phi'$.
Formula \eqref{TwoSums} implies that
\begin{align*}
\langle x,y-w\,y\rangle=\sum_{i=1}^s \,2\,\frac{a_i^w(y)}{|\alpha_i|^2}\,{\alpha_i(x)} =0, 
\end{align*}
since $\alpha_i(x)=0$. By Corollary \ref{KillMaxSameWall}, we get $w\in W_y=W'$.
\end{proof}

\begin{remark}
Lemma \ref{blocks} holds with the same proof if $\Phi'=\emptyset$, $\Phi_0=\Phi$ and $W_0=W\setminus\{id\}$.
\end{remark}

\begin{corollary}\label{A}
Let $\Phi'$ be a basic root subsystem of $\Phi$ and $x$, $y\in\bar\a^+$. Suppose $0<c\leq 1/(4\,C_1)$ where $C_1$ is as in Remark \ref{C} and
\begin{align*}
|x-y|\leq c\,\alpha(y)
\end{align*}
for all $\alpha \in\Phi_0^+$. Then if $w\in W_0$ and $K_0$ is as in Lemma \ref{blocks}, we have
\begin{align*}
\alpha(y)/2\leq \alpha(x)\leq 2\,\alpha(y)
\end{align*}
and
\begin{align}
\frac{|x-y|}{|x-w\,y|}\leq \frac{c}{\sqrt{K_0/2}}.\label{K0}
\end{align}
\end{corollary}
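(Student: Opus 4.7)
The plan is to split the statement into the two inequalities and derive each from a single ingredient already available in the paper.

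For the first bound $\alpha(y)/2 \leq \alpha(x) \leq 2\alpha(y)$ (for $\alpha \in \Phi_0^+$), I would start from the linear identity $\alpha(x) - \alpha(y) = \alpha(x-y)$ and apply the Lipschitz-type estimate $|\alpha(z)| \leq C_1|z|$ from Remark \ref{C}. Using the hypothesis $|x-y| \leq c\,\alpha(y)$ with $c \leq 1/(4C_1)$, this gives $|\alpha(x) - \alpha(y)| \leq C_1 c\,\alpha(y) \leq \alpha(y)/4$, from which the two-sided bound (in fact the sharper $3\alpha(y)/4 \leq \alpha(x) \leq 5\alpha(y)/4$) follows immediately.

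For the second bound, I would invoke Lemma \ref{blocks}: for any $w \in W_0$ there is some simple root $\alpha \in \Phi_0^+$ with $|x - wy|^2 \geq K_0\,\alpha(x)\alpha(y)$. The first part of the corollary applied to this very $\alpha$ gives $\alpha(x) \geq \alpha(y)/2$, so
\begin{align*}
|x-wy|^2 \geq K_0\,\alpha(x)\alpha(y) \geq \frac{K_0}{2}\,\alpha(y)^2.
\end{align*}
Taking square roots and combining with the hypothesis $|x-y| \leq c\,\alpha(y)$ yields
\begin{align*}
\frac{|x-y|}{|x-wy|} \leq \frac{c\,\alpha(y)}{\sqrt{K_0/2}\,\alpha(y)} = \frac{c}{\sqrt{K_0/2}},
\end{align*}
which is \eqref{K0}.

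There is no real obstacle here: the only subtlety is making sure that the simple root $\alpha \in \Phi_0^+$ produced by Lemma \ref{blocks} is one of those for which the hypothesis $|x-y| \leq c\,\alpha(y)$ holds — which it is, since the hypothesis is assumed to hold \emph{for all} $\alpha \in \Phi_0^+$. The argument is therefore a two-line consequence of Remark \ref{C} and Lemma \ref{blocks}.
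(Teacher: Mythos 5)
Your proof is correct and follows essentially the same route as the paper: the two-sided bound on $\alpha(x)$ via Remark \ref{C} and the hypothesis $c\leq 1/(4C_1)$, then feeding $\alpha(x)\geq\alpha(y)/2$ into the lower bound of Lemma \ref{blocks} to get \eqref{K0}. You in fact fill in the details of the paper's terse ``The rest follows from Lemma \ref{blocks}'' correctly, and your observation that the simple root $\alpha\in\Phi_0^+$ produced by Lemma \ref{blocks} is automatically one to which the hypothesis applies is the right point to flag.
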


\begin{proof}
By Remark \ref{C}, if $\alpha\in \Phi_0^+$ then 
\begin{align*}
\alpha(x)\geq \alpha(y)-C_1\,|x-y|\geq \alpha(y)-C_1\,c\,\alpha(y)
=(1-C_1\,c)\,\alpha(y)\geq \alpha(y)/2.
\end{align*}

On the other hand, $\alpha(y)\geq 
\alpha(x)-C_1\,|x-y|\geq \alpha(x)-C_1\,c\,\alpha(y)
\geq \alpha(x)-2\,C_1\,c\,\alpha(x)
=(1-2\,C_1\,c)\,\alpha(x)\geq \alpha(x)/2$. The rest follows from Lemma \ref{blocks}.
\end{proof}


\begin{lemma}\label{side} (Conjecture \ref{TL} in the case $\Phi'=\emptyset$)
If $0<c\leq \min\{\frac{1}{4\,C_1},\frac{\sqrt{K_0/2}}{\sqrt[d]{2\,|W|}}\}$ where $C_1$ is as in Remark \ref{C}, $K_0$ is as in Lemma \ref{blocks} and $|x-y|\leq c\,\alpha(y)$ for every $\alpha\in \Phi$ then the Conjecture \ref{TL} holds.
\end{lemma}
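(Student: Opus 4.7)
The plan is to exploit the alternating-sum formula \eqref{alt}. Since $\Phi'=\emptyset$ the index set $W_0$ is all of $W\setminus\{\mathrm{id}\}$, so I split
\begin{align*}
\sum_{w\in W}\frac{\epsilon(w)}{|x-wy|^d}=\frac{1}{|x-y|^d}+\sum_{w\in W\setminus\{\mathrm{id}\}}\frac{\epsilon(w)}{|x-wy|^d}.
\end{align*}
Because the hypothesis $|x-y|\le c\,\alpha(y)$ holds for every positive root, inequality \eqref{K0} in Corollary \ref{A} applies and yields $|x-wy|\ge (\sqrt{K_0/2}/c)\,|x-y|$ for every $w\neq \mathrm{id}$. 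Therefore
\begin{align*}
\Bigl|\sum_{w\in W\setminus\{\mathrm{id}\}}\frac{\epsilon(w)}{|x-wy|^d}\Bigr|\le (|W|-1)\Bigl(\frac{c}{\sqrt{K_0/2}}\Bigr)^d\frac{1}{|x-y|^d}\le \frac{1}{2\,|x-y|^d},
\end{align*}
the last step using $c\le \sqrt{K_0/2}/\sqrt[d]{2|W|}$. Combined with the explicit identity contribution $1/|x-y|^d$, this gives the two-sided bound $\sum_{w\in W}\epsilon(w)|x-wy|^{-d}\asymp |x-y|^{-d}$.

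The next step is to match the weight $1/(\pi(x)\pi(y))$ appearing in \eqref{alt} with the product $\prod_{\alpha>0}|x-\sigma_\alpha y|^{-2}$ from the definition of $\Omega$. By Lemma \ref{basic simple}(ii),
\begin{align*}
|x-\sigma_\alpha y|^2=|x-y|^2+\frac{4}{|\alpha|^2}\,\alpha(x)\,\alpha(y).
\end{align*}
From Corollary \ref{A} I get $\alpha(x)\ge \alpha(y)/2$, hence $|x-y|^2\le c^2\alpha(y)^2\le 2c^2\,\alpha(x)\alpha(y)$, while the second summand on the right is bounded below by $(4/|\alpha|^2)\alpha(x)\alpha(y)$. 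Together these produce $|x-\sigma_\alpha y|^2\asymp \alpha(x)\alpha(y)$, and taking the product over positive roots,
\begin{align*}
\prod_{\alpha>0}|x-\sigma_\alpha y|^{2}\asymp \pi(x)\,\pi(y).
\end{align*}

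Substituting both comparisons into \eqref{alt} gives
\begin{align*}
P^W(x,y)\asymp \frac{1-|x|^2}{\pi(x)\,\pi(y)\,|x-y|^d}\asymp \frac{1-|x|^2}{|x-y|^d\,\prod_{\alpha>0}|x-\sigma_\alpha y|^2}=\Omega(x,y),
\end{align*}
which is Conjecture \ref{TL} on the region under consideration. There is essentially no remaining obstacle, since the difficult book-keeping has already been performed in Corollary \ref{A}; the two numerical constraints imposed on $c$ play precisely the two roles required above, namely ensuring that the non-identity terms of the alternating sum are dominated by the identity term, and that $|x-y|^2$ is negligible against $(4/|\alpha|^2)\alpha(x)\alpha(y)$ in the expansion of $|x-\sigma_\alpha y|^2$.
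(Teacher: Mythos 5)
Your proposal is correct and follows essentially the same route as the paper's own proof: both start from the alternating-sum formula \eqref{alt}, isolate the identity term $1/|x-y|^d$, bound the non-identity terms via Corollary \ref{A} and \eqref{K0} using the constraint $c\le\sqrt{K_0/2}/\sqrt[d]{2|W|}$ so that they eat up at most half of the identity contribution, and then identify $\pi(x)\pi(y)\asymp\prod_{\alpha>0}|x-\sigma_\alpha y|^2$ from Lemma \ref{basic simple}(ii) together with $\alpha(x)\asymp\alpha(y)$ and $|x-y|\le c\,\alpha(y)$. The only cosmetic difference is that you phrase the conclusion as a two-sided comparability of the alternating sum with $|x-y|^{-d}$ while the paper writes the two-sided bound on $P^W$ directly, but the estimates and the roles of the two numerical constraints on $c$ are identical.
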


\begin{proof} 
In that case, $\Phi'=\emptyset$, $\Phi_0=\Phi$, $W_0=W\setminus\{id\}$ and by Corollary \ref{A}, for every $w\in W_0$, 
equation \eqref{K0} holds and
\begin{align*}
\lefteqn{\frac{1-|x|^2}{w_d\,\pi(x)\,\pi(y)} \,\frac{1}{|x-y|^{d}}\geq P^W(x,y)
\geq\frac{1-|x|^2}{|W|\,w_d\,\pi(x)\,\pi(y)} \,\left[\frac{1}{|x-y|^{d}}-\sum_{w\not=id}\,\frac{1}{|x-w\,y|^d}
\right]}\\
&\geq\frac{1-|x|^2}{|W|\,w_d\,\pi(x)\,\pi(y)} \,\left[\frac{1}{|x-y|^{d}}-\frac{c^d}{(K_0/2)^{d/2}}\,\sum_{w\not=id}\,\frac{1}{|x-y|^d}
\right]\\
&\geq(1-c^d\,|W|/(K_0/2)^{d/2})\,\frac{1-|x|^2}{|W|\,w_d\,\pi(x)\,\pi(y)} \,\frac{1}{|x-y|^{d}}
\geq\frac{1}{2}\,\frac{1-|x|^2}{|W|\,w_d\,\pi(x)\,\pi(y)} \,\frac{1}{|x-y|^{d}}.
\end{align*}
Hence, the conjecture holds in this case since $|x-\sigma_\alpha\,y|^2=|x-y|^2+C\,\alpha(x)\,\alpha(y)\asymp\alpha(x)\,\alpha(y)$ by the hypothesis $|x-y|\leq c\,\alpha(y)$ and Corollary \ref{A}.
\end{proof}
{
In the second part of this subsection devoted to estimates of the remainder $\sum_{w\in W_0}\,\frac{\epsilon(w)}{|x-w\,y|^d}$, we
 consider all root subsystems $\Phi''\subseteq\Phi'$
 and we deal with the function $R_{\Phi''}(x,y)$, defined in the following definition. This will be essential for the upper estimates of the remainder on the subregions $S_{\Phi',c}$.
}

\begin{definition}
Let $\Phi'$ be a basic root subsystem of $\Phi$, $\Phi''$ a subsystem of $\Phi'$ and $I=|\Phi''|$. We define, when $(x,y)\in D_0=\{(x,y) {\in\a\times\a} \ |\ \prod_{w\in W_0}|x-w\,y|\not= 0 \}$,
\begin{align}\label{Nxy}
R_{\Phi''}(x,y)=|x-y|^{d+2\,I}\,\sum_{w\in W_0}\,\frac{\epsilon(w)}{|x-w\,y|^d}
\end{align}
\end{definition}

\begin{remark} 
When the subsystem $\Phi''$ is fixed and it does not lead to misunderstanding, in order to simplify the notation, we will write 
\begin{align*}
R_{\Phi''}(x,y)=R(x,y)
\end{align*}
even though $R(x,y)$ depends on $\Phi''$
via the factor $|x-y|^{d+2\,I}=|x-y|^{d+2\,|\Phi''|}$.
\end{remark}

\begin{lemma}\label{Bound} 
Let $\Phi'$ be a basic root subsystem of $\Phi$. Let $\Phi''\not=\emptyset$ be a basic root subsystem of $\Phi'$.
Let $R(x,y)$ be defined by \eqref{Nxy}.
{Denote} $\partial^y(\pi'')=\prod_{\alpha\in \Phi''}\,\partial^y_\alpha$. Then there exists a constant $K$ independent of $x$ and $y$ such that for all $(x,y)\in D_0$
\begin{align*}
\left|\partial^x(\pi'')\,\partial^y(\pi'')\,R(x,y)\right|\ \le \ K\,\max_{0\leq i \leq 2\,I, w\in W_0}\,\left\{\frac{|x-y|^{d+i}}{|x-w\,y|^{d+i}}\right\}.
\end{align*}
\end{lemma}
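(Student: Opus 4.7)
The plan is to expand $\partial^x(\pi'')\partial^y(\pi'')R(x,y)$ by the Leibniz rule, distributing the $2I$ directional derivatives between the two factors $|x-y|^{d+2I}$ and $\sum_{w\in W_0}\epsilon(w)|x-wy|^{-d}$ of $R$, and then to bound each resulting summand by means of the uniform root bounds in Remark \ref{C}.

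\textbf{Two preparatory estimates.} I would first prove by induction on $k$ that for any $s\in\R$ and any choice of $\beta_1,\dots,\beta_k\in\Phi''$ and slots $z_1,\dots,z_k\in\{x,y\}$,
\begin{align*}
\left|\partial_{\beta_1}^{z_1}\cdots\partial_{\beta_k}^{z_k}|x-y|^{s}\right|\le C\,|x-y|^{s-k},
\end{align*}
using the identities $\partial_\beta^x|x-y|^s=s|x-y|^{s-2}\beta(x-y)$ and $\partial_\beta^y|x-y|^s=-\partial_\beta^x|x-y|^s$, together with $|\beta(x-y)|\le C_1|x-y|$ from \eqref{C1C2}. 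Iterating, one keeps a finite sum of terms of the form $c\,|x-y|^{s-2m}\prod_{j=1}^{2m-k}\gamma_j(x-y)$, each of which is bounded by $C|x-y|^{s-k}$. By the same induction, using $\partial_\beta^y|x-wy|^{-d}=-d|x-wy|^{-d-2}\beta(y-w^{-1}x)$ with $|\beta(y-w^{-1}x)|=|\beta(w^{-1}(wy-x))|\le C_1|x-wy|$ (as $w^{-1}$ is an isometry), I would obtain
\begin{align*}
\left|\partial_{\beta_1}^{z_1}\cdots\partial_{\beta_k}^{z_k}|x-wy|^{-d}\right|\le C\,|x-wy|^{-d-k}
\end{align*}
for every $w\in W_0$.

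\textbf{Leibniz assembly.} The product rule then yields
\begin{align*}
\partial^x(\pi'')\partial^y(\pi'')R(x,y)=\sum_{(S,T)}c_{S,T}\,(\partial_S|x-y|^{d+2I})\cdot\sum_{w\in W_0}\epsilon(w)\,\partial_T|x-wy|^{-d},
\end{align*}
where $(S,T)$ ranges over the finitely many ways of distributing the $2I$ derivatives between the two factors and $|S|+|T|=2I$. When $|S|=j$ and $|T|=2I-j$, the two preparatory estimates bound the absolute value of the summand indexed by $(S,T,w)$ by
\begin{align*}
C\,|x-y|^{d+2I-j}\cdot|x-wy|^{-d-(2I-j)}=C\,\frac{|x-y|^{d+i}}{|x-wy|^{d+i}},\qquad i:=2I-j\in\{0,1,\dots,2I\}.
\end{align*}
Taking the maximum over $w\in W_0$ and $0\le i\le 2I$ and absorbing the finitely many constants into $K$ yields the lemma.

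\textbf{Main obstacle.} There is no conceptual difficulty. The only care needed is in the inductive step of the preparatory estimates, where each derivative can either hit a power $|x-y|^{s-2m}$ (dropping the exponent by $2$ and introducing a linear factor $\beta(x-y)$) or an already present linear factor $\gamma(x-y)$ (replacing it by the constant $\gamma(A_\beta)$); in either case the overall bound is multiplied by at most $C_1|x-y|^{-1}$, so the recursion closes cleanly.
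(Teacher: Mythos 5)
Your proof is correct and uses essentially the same argument as the paper: both rest on iterated application of the Leibniz rule to $|x-y|^{d+2I}\,|x-wy|^{-d}$ together with the uniform bounds on $|\alpha(z)|$ and $|\partial_\alpha^x\beta(wx)|$ from Remark \ref{C}. You organize the computation as two separate preparatory estimates (one for each factor) followed by a Leibniz assembly, whereas the paper iterates the paired operator $\partial^y_\alpha\,\partial^x_\alpha$ on the product and tracks the degree of homogeneity of the resulting terms directly, but the underlying calculation and bookkeeping are the same.
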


\begin{proof}
First note that
\begin{align}\label{der}
\partial^x_\alpha\,|x-y|^P =P\,\alpha(x-y)\,|x-y|^{P-2}
\end{align}
and
\begin{align}\label{first der}
\partial^x_\alpha\,\left[|x-y|^{d+2\,I}\,|x-w\,y|^{-d}\right]
&=(d+2\,I)\,\alpha(x-y)\,|x-y|^{d+2\,I-2}\,|x-w\,y|^{-d}\nonumber \\
&+(-d)\,\alpha(x-w\,y)|x-y|^{d+2\,I}|x-w\,y|^{-d-2}.
\end{align}

In the computations below, it is helpful to remember that $|x-w\,y|=|w^{-1}\,x-y|$.
Using formulas \eqref{der} and \eqref{first der}, we obtain for each $w\in W_0$,
\begin{align*}
\lefteqn{\partial^y_\alpha\,\partial^x_\alpha\,|x-y|^{d+2\,I}\,|x-w\,y|^{-d}}\\
&=-2\,(d+2\,I)\,|x-y|^{d+2\,I-2}\,|x-w\,y|^{-d}\\
&+{(d+2\,I-2)}\,(d+2\,I)\,\alpha(x-y)\,\alpha(y-x)\,|x-y|^{d+2\,I-4}\,|x-w\,y|^{-d}\\
&-d\,(d+2\,I)\,\alpha(x-y)\,\alpha(y-w^{-1}\,x)\,|x-y|^{d+2\,I-2}\,|x-w\,y|^{-d-2}\\
&-{(d+2\,I-2)}\,(d+2\,I)\,\alpha(y-x)\,\alpha(x-w\,y)\,|x-y|^{d +2\,I-2}\,|x-w\,y|^{-d-2}\\
&-{(d+2\,I-2)}\,(d+2\,I)\,\alpha(y-x)\,\alpha(x-w\,y)\,|x-y|^{d+2\,I-2}\,|x-w\,y|^{-d-2}\\
&+d\,\partial_\alpha^y(\alpha(w\,y))\,	|x-y|^{d+2\,I}\,|x-w\,y|^{-d-2}\\
&+d\,(d+2)\,\alpha(x-w\,y)\,\alpha(y-w^{-1}\,x)\,|x-y|^{d+2\,I}\,|x-w\,y|^{-d-4}.
\end{align*}

Observe that 
 after each application of an operator
$\partial^y_\alpha\,\partial^x_\alpha$, where $\alpha$
is a root from $\Phi''$, we get {a sum of} terms 
which are products of constants times terms in $\alpha(x-y)$, $\alpha(w^{-1}\,x-y)$, $\alpha(x-w\,y)$, $|x-y|^{d+2\,I-2\,{k}}$, 
and $|x-w\,y|^{-d-{2l}}$. {We used} the fact that {a directional} derivative of a linear term such as $\alpha(x-y)$ {is} a constant.

By applying {for the first time} $\partial^y_\alpha\,\partial^x_\alpha$, the degree
{of homogeneity of the function $|x-y|^{d+2\,I}\,|x-w\,y|^{-d}$} 
went from $2\,I$ 
to $2\,I-2$.

Each new {application of an operator $\partial^y_\alpha\,\partial^x_\alpha$}
decreases the degree by 2 until we get {the} degree of 0 once we have applied $\partial^x({\pi''})\,\partial^y({\pi''})$.

{We use the estimates \eqref{C1C2} from Remark \ref{C}.}
From the fact that $|\alpha(x-y)|\leq C_1\,|x-y|$, $|\alpha(x-w\,y)|\leq C_1\,|x-w\,y|$, $|\alpha(w^{-1}\,x-y)|\leq C_1\,|w^{-1}\,x-y|=C_1\,|x-w\,y|$, we
note that
$\left|\partial^x({\pi''})\,\partial^y({\pi''})\,R(x,y)\right|$ is bounded by
a finite sum of terms of the form
$C\, |x-y|^{d+2\,I-\,{j}}\,|x-w\,y|^{-d-{(2I-\,j)}}$,
or, equivalently,
\begin{align*}
C\, \frac{|x-y|^{d+i}}{|x-w\,y|^{d+i}}
\end{align*}
where $0\leq i\leq 2\,I$. The rest follows.
\end{proof}

\begin{lemma}
Let $\Phi'$ be a basic root subsystem and let $\alpha_1$, \dots,$\alpha_M$ be distinct positive roots of $\Phi'$ with $M\leq |\Phi'|$ and let $\pi'(x)=\prod_{\alpha\in\Phi'_+}\,\langle\alpha,x\rangle$. Then
\begin{align}
\partial_{\alpha_M}\,\cdots\,\partial_{\alpha_1}\,\pi'(x)
=\sum_{\beta_1,\dots,\beta_M}\,\prod_{i=1}^M\,\langle\beta_i,\alpha_i\rangle\,\prod_{\alpha\in\Phi'_+\setminus\{\beta_1,\dots,\beta_M\}}\,\langle\alpha,x\rangle
\label{dpi}
\end{align}
where the sum is taken over distinct positive roots and the order of the $\beta_i$'s is important.
\end{lemma}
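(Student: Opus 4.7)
The plan is a straightforward induction on $M$, applying the Leibniz rule iteratively. Since every factor $\langle \alpha, x\rangle$ of $\pi'(x)$ is linear in $x$, the directional derivative $\partial_{\alpha_i}$ acts on it by replacing $\langle \alpha, x\rangle$ with the constant $\langle \alpha, \alpha_i\rangle$. The bookkeeping simply tracks which linear factor is "consumed" at each step.

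For the base case $M = 1$, the product rule gives
\begin{align*}
\partial_{\alpha_1} \pi'(x) = \sum_{\beta_1 \in \Phi'_+} \langle \beta_1, \alpha_1 \rangle \prod_{\alpha \in \Phi'_+ \setminus \{\beta_1\}} \langle \alpha, x \rangle,
\end{align*}
which matches \eqref{dpi} with $M=1$.

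For the inductive step, assume \eqref{dpi} holds for $M-1$ and apply $\partial_{\alpha_M}$ to both sides. In each term of the sum, the scalar $\prod_{i=1}^{M-1} \langle \beta_i, \alpha_i\rangle$ is a constant, so $\partial_{\alpha_M}$ acts only on $\prod_{\alpha \in \Phi'_+ \setminus \{\beta_1, \ldots, \beta_{M-1}\}} \langle \alpha, x\rangle$. Applying the Leibniz rule to this reduced product yields
\begin{align*}
\sum_{\beta_M \in \Phi'_+ \setminus \{\beta_1, \ldots, \beta_{M-1}\}} \langle \beta_M, \alpha_M\rangle \prod_{\alpha \in \Phi'_+ \setminus \{\beta_1, \ldots, \beta_M\}} \langle \alpha, x\rangle,
\end{align*}
and substituting this back and combining the sums produces exactly \eqref{dpi} for $M$.

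There is no real obstacle here; the only point worth emphasizing is why the $\beta_i$'s are automatically forced to be distinct. This is precisely because, once a factor $\langle \beta_i, x\rangle$ has been selected for differentiation by $\partial_{\alpha_i}$, it becomes a constant and disappears from the remaining product, so it is no longer eligible to be picked at a later stage. Equivalently, $\partial_{\alpha_j} \partial_{\alpha_i} \langle \beta, x\rangle = 0$ for any $i, j$. This also explains the order dependence in the statement: permuting the $\beta_i$'s corresponds to a different assignment of which derivative consumes which factor, and these contributions are genuinely distinct terms in the expansion.
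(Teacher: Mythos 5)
Your proof is correct and follows the same route as the paper: induction on $M$ together with the Leibniz rule and the observation that $\partial_{\alpha_k}\langle\alpha,x\rangle=\langle\alpha,\alpha_k\rangle$. You simply spell out the bookkeeping (base case, inductive step, why the $\beta_i$ are distinct) that the paper leaves to the reader.
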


\begin{proof}
First note that $\partial_{\alpha_k}\,\langle\alpha,x\rangle=\langle\alpha,\alpha_k\rangle$. The result then follows easily using induction on $M$ and Leibnitz rule.
\end{proof}

\begin{corollary}\label{intermediary}
Let $\Phi'$ be a basic root subsystem and let $\alpha_1$, \dots,$\alpha_M$ be distinct positive roots of $\Phi'$ with $M<|\Phi'_+|$ and let $\pi'(x)=\prod_{\alpha\in\Phi'_+}\,\langle\alpha,x\rangle$. Then for any positive root $\alpha_{M+1}$ other than one of the $\alpha_k$, $1\leq k\leq M$, we have
\begin{align*}
\partial_{\alpha_M}\,\cdots\,\partial_{\alpha_1}\,\pi'(x)=0
\end{align*}
on the set $\{x\colon\alpha_k(x)=0,~k=1,\dots,M+1\}$.
\end{corollary}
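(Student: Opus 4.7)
The plan is to apply formula \eqref{dpi} from the preceding lemma and then invoke a pigeonhole argument on the indexing of the terms.

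First, I would write
\begin{align*}
\partial_{\alpha_M}\,\cdots\,\partial_{\alpha_1}\,\pi'(x)
=\sum_{\beta_1,\dots,\beta_M}\,\left(\prod_{i=1}^M\,\langle\beta_i,\alpha_i\rangle\right)\,\prod_{\alpha\in\Phi'_+\setminus\{\beta_1,\dots,\beta_M\}}\,\langle\alpha,x\rangle,
\end{align*}
where the sum runs over ordered $M$-tuples of distinct positive roots $\beta_1,\ldots,\beta_M\in\Phi'_+$. It is enough to show that for every such tuple, the accompanying factor $\prod_{\alpha\in\Phi'_+\setminus\{\beta_1,\dots,\beta_M\}}\,\langle\alpha,x\rangle$ vanishes on the affine subspace $V:=\{x\colon\alpha_k(x)=0,\ k=1,\ldots,M+1\}$.

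Fix such a tuple $(\beta_1,\dots,\beta_M)$ and regard $\{\beta_1,\dots,\beta_M\}$ as a set of cardinality $M$. Since $\alpha_1,\ldots,\alpha_{M+1}$ are $M+1$ distinct elements of $\Phi'_+$, the pigeonhole principle yields an index $j\in\{1,\ldots,M+1\}$ with $\alpha_j\notin\{\beta_1,\ldots,\beta_M\}$. Hence $\alpha_j$ is one of the factors in the product $\prod_{\alpha\in\Phi'_+\setminus\{\beta_1,\dots,\beta_M\}}\,\langle\alpha,x\rangle$, so for $x\in V$ the factor $\langle\alpha_j,x\rangle=\alpha_j(x)=0$ forces the whole product to vanish.

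Consequently every summand in \eqref{dpi} vanishes on $V$, proving the corollary. The only conceptual point is the interpretation of ``positive root $\alpha_{M+1}$'' as lying in $\Phi'_+$ (which is the natural reading in context, since otherwise $\alpha_{M+1}$ plays no role in $\pi'$); with this understood, the argument is a one-line pigeonhole observation and presents no real obstacle.
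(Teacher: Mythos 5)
Your proof is correct and uses the same underlying idea as the paper's (a pigeonhole count on the $M$-element set $\{\beta_1,\dots,\beta_M\}$ versus the $M+1$ roots $\alpha_1,\dots,\alpha_{M+1}$, forcing a vanishing linear factor), though you package it a bit more directly: the paper first sets $\alpha_1(x)=\dots=\alpha_M(x)=0$ to reduce to the permutation sum $\sum_{\sigma\in S_M}\prod_i\langle\alpha_{\sigma(i)},\alpha_i\rangle$ and then invokes $\alpha_{M+1}$, whereas you apply pigeonhole uniformly to every tuple in one step. Your remark that $\alpha_{M+1}$ must be read as lying in $\Phi'_+$ is also consistent with how the paper implicitly uses the hypothesis.
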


\begin{proof}
Setting $\alpha_i(x)=0$ for $i=1$, \dots, $M$ in formula \eqref{dpi} gives 
\begin{align*}
\partial_{\alpha_M}\,\cdots\,\partial_{\alpha_1}\,\pi'(x)
=\left[\sum_{\sigma\in S_M}\,\prod_{i=1}^M\,\langle\alpha_{\sigma(i)},\alpha_i\rangle\right]\,\prod_{\alpha\in\Phi'_+\setminus\{\alpha_1,\dots,\alpha_M\}}\,\langle\alpha,x\rangle.
\end{align*}
The rest is straightforward: the last product cancels on $S$ because it contains the term $ \langle\alpha_{M+1},x\rangle$.
\end{proof}

\begin{lemma}\label{nonzero}
Let $\Phi'$ be a basic root subsystem. Then $\partial(\pi')\,\pi'(x)$ is a nonzero constant.
\end{lemma}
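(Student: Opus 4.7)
My plan is to invoke a general fact about polynomials: for any nonzero real homogeneous polynomial $p$ of degree $n$ in $d$ variables, the scalar $p(\partial_1,\ldots,\partial_d)\,p(x)$ is a strictly positive constant, namely the squared norm of $p$ in the Fischer (apolar) inner product. Once this is observed, the lemma follows by specialising to $p=\pi'$.

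First, I would record a trivial degree-counting step: $\pi'$ is homogeneous of degree $M:=|\Phi'_+|$, and $\partial(\pi')=\prod_{\alpha\in\Phi'_+}\partial_\alpha$ is a linear differential operator of order exactly $M$. Hence $\partial(\pi')\,\pi'(x)$ is a polynomial of degree $0$, i.e. a constant, which may be evaluated at $x=0$ without loss of information.

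To show the constant does not vanish, I would introduce the Fischer pairing on $\mathbb{R}[x_1,\ldots,x_d]$,
\begin{align*}
\langle f, g\rangle_F := \bigl(f(\partial_1,\ldots,\partial_d)\,g\bigr)(0),
\end{align*}
and use the one-line monomial identity $\langle x^a, x^b\rangle_F = a!\,\delta_{a,b}$ to conclude that $\langle\cdot,\cdot\rangle_F$ is positive definite on $\mathbb{R}[x_1,\ldots,x_d]$. Writing $\pi'(x)=\prod_{\alpha\in\Phi'_+}\langle\alpha,x\rangle$ as a polynomial in $x_1,\ldots,x_d$ and substituting $x_i\mapsto\partial_i$, one has tautologically
\begin{align*}
\pi'(\partial_1,\ldots,\partial_d) \;=\; \prod_{\alpha\in\Phi'_+}\langle\alpha,\partial\rangle \;=\; \prod_{\alpha\in\Phi'_+}\partial_\alpha \;=\; \partial(\pi'),
\end{align*}
so the constant value of $\partial(\pi')\,\pi'$ equals $\bigl(\pi'(\partial)\,\pi'\bigr)(0) = \langle\pi',\pi'\rangle_F$, which is strictly positive because $\pi'$ is a nonzero polynomial (a product of nonzero linear forms $\langle\alpha,\cdot\rangle$).

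I do not expect any real obstacle; the only step meriting care is the identification $\partial(\pi')=\pi'(\partial)$, which is a direct consequence of the definition of $\pi'$. I would deliberately avoid the alternative route via the expansion \eqref{dpi}, which at $M$ derivatives realises $\partial(\pi')\,\pi'$ as the permanent of the Gram matrix $(\langle\alpha_i,\alpha_j\rangle)_{i,j}$ of the positive roots of $\Phi'$; positivity of that permanent is not transparent, whereas the Fischer-pairing argument is self-contained, independent of crystallographicity, and works uniformly regardless of whether $\mathrm{rank}\,\Phi'$ equals $d$ or is strictly smaller.
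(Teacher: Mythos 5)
Your proof is correct, and it takes a genuinely different route from the paper. The paper introduces the $W'$-skew-symmetric analytic function $S_\lambda(x)=\pi'(x)^{-1}\sum_{w\in W'}\epsilon(w)\,e^{\langle\lambda,wx\rangle}$, evaluates $\partial(\pi')\bigl(\pi'(x)\,S_\lambda(x)\bigr)$ at $x=0$ in two ways, and uses Corollary \ref{intermediary} (Leibniz plus vanishing of intermediate derivatives on the walls) to isolate the term $\partial(\pi')(\pi')\cdot S_\lambda(0)$; the nonvanishing then falls out because the direct computation gives $|W'|\,\pi'(\lambda)\neq 0$. Your argument bypasses all of this: the observation $\partial(\pi')=\pi'(\partial)$ identifies the constant as the Fischer norm $\langle\pi',\pi'\rangle_F$, which is visibly positive because $\{x^a\}$ is an orthogonal basis with $\langle x^a,x^a\rangle_F=a!>0$. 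Two remarks on the comparison. First, your approach actually proves slightly more than the lemma asserts: $\partial(\pi')\,\pi'$ is not merely nonzero but strictly positive, which is aesthetically satisfying given that this constant is (by the paper's equation \eqref{dpi}) the permanent of the Gram matrix $\bigl(\langle\alpha_i,\alpha_j\rangle\bigr)_{\alpha_i,\alpha_j\in\Phi'_+}$; your proof is precisely the transparent ``sum of squares'' realisation of that permanent via the isomorphism between degree-$M$ Fischer space and $\mathrm{Sym}^M(\R^d)$. Second, the paper's route leans on facts that carry nontrivial content (that the $W'$-skew-symmetric numerator is divisible by $\pi'$, and Corollary \ref{intermediary}), whereas yours is self-contained and independent of the root-system structure, the crystallographic hypothesis, and whether $\mathrm{span}_\R\Phi'$ equals $\R^d$. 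The one step you should state explicitly rather than call tautological is the identification $\partial_\alpha=\langle A_\alpha,\partial\rangle$, which silently uses the standing identification of the root $\alpha$ (a functional) with its root vector $A_\alpha$ via the Euclidean inner product; with the paper's conventions this is $\pi'(x)=\prod_{\alpha\in\Phi'_+}\langle A_\alpha,x\rangle$ and hence $\pi'(\partial)=\prod_{\alpha\in\Phi'_+}\partial_{A_\alpha}=\partial(\pi')$, which is exactly what you want.
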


\begin{remark}
The exact value of $\partial(\pi')\,\pi'(x)$ is computed in \cite[Chap.{} II, Cor.{} 5.36]{Helgason2}. We provide here an elementary proof that $\partial(\pi')\,\pi'(x)\not=0$
\end{remark}

\begin{proof}
It is clear that $\partial(\pi')\,\pi'(x)$ is a constant given that the degree of the operator $\partial(\pi')$ is equal to the degree of $\pi'(x)$. Now, let 
\begin{align*}
S_\lambda(x)=
\frac{\sum_{w\in W}\,\epsilon(w)\,e^{\langle \lambda,w\,x\rangle}}{\pi'(x)}=\frac{\sum_{w\in W}\,\epsilon(w)\,e^{\langle w^{-1} \lambda,x\rangle}}{\pi'(x)}
\end{align*}
where $W$ is generated by the reflections $\sigma_\alpha$, $\alpha\in\Phi'$. Now, $S_\lambda$ is an analytic function (the numerator is $W$ skew-symmetric and analytic; in fact $S_\lambda$ is a constant multiple of $\pi(\lambda)$ times a spherical function). If we compute $\partial(\pi')\,(\pi'(x)\,S_\lambda(x))$ and set $x=0$, we get
\begin{align*}
\left.\partial(\pi')\,(\pi'(x)\,S_\lambda(x))\right|_{x=0}=\sum_{w\in w}\,\epsilon(w)\,\pi'(w^{-1}\,\lambda)=|W|.
\end{align*}

The left hand side is equal to $\partial(\pi')\,(\pi'(x))\,S(0)$ since all terms involving derivatives of $S_\lambda$ are eliminated by one of the roots (a consequence of Corollary \ref{intermediary}). We can therefore conclude that $\partial(\pi')\,(\pi'(x))\not=0$.
\end{proof}

\begin{proposition}\label{Hospital+}
Let $\Phi'$ be a basic root subsystem, $\Phi''$ a subsystem of $\Phi'$ and let $P_{\Phi''}=\bigcup_{\alpha\in \Phi''}\,H_\alpha$ and $\pi''(x)=\prod_{\alpha\in\Phi''}\,\alpha(x)$. 
Suppose $R(x,y)$ is defined as in \eqref{Nxy}.
Note that $R(x,y)=Q(x,y)\,\pi''(x)\,\pi''(y)$ where $Q$ is analytic since $R$ is analytic and skew-symmetric with respect to $W''$ (the Weyl subgroup generated by the reflections $\sigma_\alpha$, $\alpha\in\Phi''$) in both variables.
Then there exists a constant $K$ independent of $x$ and $y$ such that
\begin{align*}
\sup_{x,y\in \a^+\setminus P_{\Phi''}}\,\frac{|R(x,y)|}{\pi''(x)\,\pi''(y)}
&\le 
\sup_{x,y\in \a^+\setminus P_{\Phi''}}\frac{|\partial^x(\pi'')\,\partial^y(\pi'')\, R(x,y)|}
{|\partial^x(\pi'')\,\partial^y(\pi'')\,(\pi''(x)\,\pi''(y)) |}\\
&=
K\,\sup_{x,y\in \a^+\setminus P_{\Phi''}}|\partial^x(\pi'')\,\partial^y(\pi'')\, R(x,y)|.
\end{align*}
\end{proposition}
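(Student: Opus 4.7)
The proposition really contains two statements: the rightmost equality and the middle inequality. For the equality, the plan is to invoke Lemma~\ref{nonzero} applied to the root subsystem $\Phi''$ (with associated Weyl group $W''$ generated by $\{\sigma_\alpha:\alpha\in\Phi''\}$), which yields $\partial(\pi'')\pi''(z)\equiv K_1$ for a nonzero constant $K_1$. Since the $x$ and $y$ variables separate, $\partial^x(\pi'')\partial^y(\pi'')\big(\pi''(x)\pi''(y)\big)\equiv K_1^{\,2}=:K'$ is a nonzero constant; pulling it outside the supremum gives the stated equality with $K=1/|K'|$.

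For the inequality, the first task is to justify that $Q:=R/(\pi''(x)\pi''(y))$ is analytic. This relies on the skew-symmetry of $R$ in $x$ and separately in $y$ under $W''$: the factor $|x-y|^{d+2I}$ is $W$-invariant, while the alternating sum $\sum_{w\in W_0}\epsilon(w)/|x-wy|^d$ transforms by the sign character of $W''$ in each variable, because $W_0=W\setminus W'$ is stable under both left and right multiplication by $W''\subseteq W'$. Therefore $R$ is divisible by $\pi''(x)\pi''(y)$, and the quotient extends analytically.

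The key structural step is then to expand
\begin{align*}
DR(x,y)=\sum_{I,J\subseteq \Phi''_+}\bigl(\partial^x_I\partial^y_J Q\bigr)(x,y)\cdot \bigl(\partial^x_{I^c}\pi''(x)\bigr)\bigl(\partial^y_{J^c}\pi''(y)\bigr)
\end{align*}
by the iterated Leibniz rule. The single term with $I=J=\emptyset$ equals $Q(x,y)\cdot K'$. For every other term at least one of $I^c,J^c$ is a proper subset of $\Phi''_+$, and Corollary~\ref{intermediary} forces the corresponding factor $\partial^x_{I^c}\pi''$ or $\partial^y_{J^c}\pi''$ to vanish on $V:=\bigcap_{\alpha\in\Phi''_+}H_\alpha$. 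Consequently, for every $(x_*,y_*)\in V\times V$ only the main term survives and $DR(x_*,y_*)=Q(x_*,y_*)\,K'$, so that $|Q|=|DR|/|K'|$ holds pointwise on $V\times V$.

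The final step (and the main obstacle) is to convert this boundary identity into the sup inequality over the open set $\a^+\setminus P_{\Phi''}$. The plan is an iterated L'H\^opital-type argument, one root direction at a time: since $R$ vanishes to order one along each hyperplane $H_\alpha$ with $\alpha\in\Phi''$, applying single-variable L'H\^opital in the direction $A_\alpha$ replaces $R/\alpha(x)$ by $\partial^x_\alpha R/|\alpha|^2$ up to the same constants at the limit point, and iterating over all of $\Phi''_+$ in both $x$ and $y$ recovers $Q$ as the limit of $DR/K'$ at a point of $V\times V$. Combined with continuity of $Q$ on $\overline{\a^+}\times\overline{\a^+}$ and the fact that $|Q|$ decays at infinity and as $|x-y|\to 0$ (so its supremum can be approached by sequences whose limit lies on $V\times V$), this yields $\sup|Q|\le \sup_{V\times V}|DR|/|K'|\le \sup|DR|/|K'|$, which is the desired inequality. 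The delicate point in writing this out carefully is precisely the justification that the supremum is controlled by limiting values on $V\times V$; the explicit structure of $R$ as an alternating sum supplies this, since each term in $R$ vanishes at infinity, forcing the extremum of $|Q|$ to be attained in a region where the L'H\^opital identification with $DR/K'$ is available.
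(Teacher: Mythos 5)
Your treatment of the equality (via Lemma \ref{nonzero} and separation of variables) matches the paper, and your Leibniz expansion showing $DR = Q\cdot K'$ on $V\times V := \bigl(\bigcap_{\alpha\in\Phi''_+}H_\alpha\bigr)^2$ is a correct observation. But the final step, where you convert the identity on $V\times V$ into the global sup inequality, contains a genuine gap.

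You assert that the supremum of $|Q|$ over $\a^+\setminus P_{\Phi''}$ ``can be approached by sequences whose limit lies on $V\times V$,'' appealing to decay of $|Q|$ at infinity and as $|x-y|\to 0$. This does not follow. $Q$ is homogeneous of degree zero (both $R$ and $\pi''(x)\pi''(y)$ are homogeneous of degree $2I$), so decay at infinity is moot; restricting to a sphere, $|Q|$ is a continuous function that attains its maximum somewhere on a compact set, and there is no mechanism forcing that maximizer (or any maximizing sequence) to lie on the thin subvariety $V\times V$. You are implicitly invoking something like a maximum principle, but none is available here. In short, the boundary identity $Q = DR/K'$ on $V\times V$ is strictly weaker than what the inequality requires.

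The paper's argument is different and avoids this issue: it applies the Cauchy Mean Value Theorem iteratively (following Lawlor's multivariate de l'H\^opital rule) to produce, for \emph{each fixed} $(x,y)\in\a^+\setminus P_{\Phi''}$, an intermediate point $(\tilde x,\tilde y)$ in the same domain with
\begin{align*}
\frac{R(x,y)}{\pi''(x)\,\pi''(y)} \;=\; \frac{\partial^x(\pi'')\,\partial^y(\pi'')\,R(\tilde x,\tilde y)}{\partial^x(\pi'')\,\partial^y(\pi'')\,(\pi''(\tilde x)\pi''(\tilde y))}.
\end{align*}
One root direction is handled at a time (first in $x$, then in $y$, then the next root), with Corollary \ref{intermediary} guaranteeing that the required vanishing at each projected point persists through the iteration. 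Since this is a pointwise identity with the intermediate point ranging over the same set, taking suprema immediately yields the inequality. To repair your argument you would need to replace the ``sup approached on $V\times V$'' step with exactly this pointwise mean-value identification; the Leibniz/boundary-identity calculation you carried out does not substitute for it.
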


\begin{proof}
We use the Cauchy Mean Value Theorem several times as in the proof of the multivariate de l'Hospital rule proposed by Lawlor \cite{Lawlor}. More precisely, 
if {$x$, $y\not\in H_\alpha$}, then, denoting by $x_\alpha$ and $y_\alpha$ the projections of $x$ and $y$ on $H_\alpha$,
\begin{align*}
\frac{R(x,y)}{\alpha(x)\,\alpha(y)}=\frac1{\alpha(y)}\frac{R(x,y)-R(x_\alpha,y)}{\alpha(x)-\alpha(x_\alpha)}=
\frac1{\alpha(y)}\frac{\partial_\alpha^xR(x',y)}{\partial_\alpha^x\alpha(x')}
\end{align*}
for an intermediate point $x'\in \a^+\setminus H_\alpha$.
The same reasoning allows us to repeat the argument with $\partial_\alpha^y$:
\begin{align*}
\frac{\partial_\alpha^x\,R(x',y)}{\alpha(y)}
=\frac{\partial_\alpha^x\,R(x',y)-\partial_\alpha^x\,R(x',y_\alpha)}{\alpha(y)-\alpha(y_\alpha)}
=\frac{\partial_\alpha^y\,\partial_\alpha^x\,R(x',y')}{\partial_\alpha^y\alpha(y')}.
\end{align*}

This was done for a single $\alpha$ but we can repeat the process for all the roots $\alpha$, $\alpha\in {\Phi''}$. This is made possible by Corollary \ref{intermediary}. Note that $\partial^x(\pi'')\,\partial^y(\pi'')\,(\pi''(x)\,\pi''(y))\not=0$ by Lemma \ref{nonzero}.
\end{proof}

\begin{proposition}\label{N}
Let $\Phi'$ be a basic root subsystem of $\Phi$ and $\Phi''$ a subsystem of $\Phi'_+$. 
Suppose $0<c\leq \min\{1,1/(4\,C_1)\}$ where $C_1$ is as in Remark \ref{C} and let
\begin{align}
S_{\Phi',c}=\{(x,y)\in B^+\times S^+\colon c\,\alpha(y)< |x-y| < c\,\beta(y)
\ {\rm for\ all}\ \alpha\in \Phi'_+, \beta\in \Phi_0^+
\}.\label{S}
\end{align}
{Suppose $R(x,y)$ is defined as in \eqref{Nxy}.}
Then there exists a constant $K$ independent of $x$ and $y$ such that
\begin{align*}
\sup_{x,y\in S_{\Phi',c}}\,\frac{|R(x,y)|}{\pi''(x)\,\pi''(y)}
\leq K\,c^d.
\end{align*}
\end{proposition}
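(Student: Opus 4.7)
The plan is to combine Proposition~\ref{Hospital+} with Lemma~\ref{Bound}, exploiting the fact that on $S_{\Phi',c}$ every ratio $|x-y|/|x-wy|$ for $w \in W_0$ that enters the derivative bound is of order $c$.

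The defining inequality $|x-y| < c\,\beta(y)$ for every $\beta \in \Phi_0^+$, together with the hypothesis $c \leq 1/(4C_1)$, is precisely the hypothesis of Corollary~\ref{A}. Consequently, for every $w \in W_0$ and every $(x,y) \in S_{\Phi',c}$,
\begin{equation*}
\frac{|x-y|}{|x-wy|} \leq \frac{c}{\sqrt{K_0/2}}.
\end{equation*}
Substituting into Lemma~\ref{Bound} yields, pointwise on $S_{\Phi',c}$,
\begin{equation*}
|\partial^x(\pi'')\,\partial^y(\pi'')\,R(x,y)| \;\leq\; K\max_{0\leq i\leq 2I}\Bigl(\tfrac{c}{\sqrt{K_0/2}}\Bigr)^{d+i} \;\leq\; K' c^d,
\end{equation*}
where the last inequality uses $c \leq 1$ to absorb the exponents $i \geq 0$ into the constant $K'$. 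Applying Proposition~\ref{Hospital+}, whose denominator $|\partial^x(\pi'')\,\partial^y(\pi'')(\pi''(x)\pi''(y))|$ is a nonzero constant by Lemma~\ref{nonzero}, then converts this estimate into the required bound $|R(x,y)|/(\pi''(x)\pi''(y)) \leq K'' c^d$.

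The main obstacle I anticipate is that Proposition~\ref{Hospital+} as stated controls a \emph{global} supremum on $\a^+\setminus P_{\Phi''}$, whereas our derivative estimate holds only on the subregion $S_{\Phi',c}$. What is really needed is a localized version: given $(x,y) \in S_{\Phi',c}\setminus P_{\Phi''}$, the successive applications of the Cauchy Mean Value Theorem in the proof of Proposition~\ref{Hospital+} produce intermediate points on segments joining $(x,y)$ to its projections onto hyperplanes $H_\alpha$ with $\alpha \in \Phi''$. I would therefore revisit that proof and verify that along these segments the inequalities $|x-y| < c\,\beta(y)$ for $\beta \in \Phi_0^+$ persist up to a controlled multiplicative constant, so that Corollary~\ref{A} continues to apply at the intermediate points and the derivative bound remains of order $c^d$ there. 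This reduces to a direct geometric computation using the linear estimates $|\alpha(x)| \leq C_1|x|$ from Remark~\ref{C} together with the fact that the projection of $x$ onto $H_\alpha$ only perturbs $\beta(x)$ by $\frac{\alpha(x)}{|\alpha|^2}\beta(A_\alpha)$ for $\beta \in \Phi_0^+$, an amount that is uniformly controlled on $S_{\Phi',c}$.
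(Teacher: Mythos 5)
Your proof reproduces the paper's own argument: the paper's proof of Proposition~\ref{N} reads, in its entirety, ``This result follows directly from Corollary~\ref{A}, Lemma~\ref{Bound} and Proposition~\ref{Hospital+}'', and your main text threads those three pieces together in exactly the intended order. The appendix where you flag the global/local mismatch in Proposition~\ref{Hospital+} is therefore not a gap in \emph{your} argument relative to the paper but a genuine (and worthwhile) criticism of the paper's own terse proof.

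That said, the localization fix you sketch is not quite right as stated, and the issue is a bit more delicate. You propose to check that ``the inequalities $|x-y|<c\,\beta(y)$ persist up to a controlled multiplicative constant'' along the segments. But on $S_{\Phi',c}$ the hypothesis only gives the upper bound $\alpha(y)<|x-y|/c$ for $\alpha\in\Phi''_+\subseteq\Phi'_+$, so $|x-x_\alpha|=\alpha(x)/|\alpha|$ can be as large as a constant times $|x-y|/c$. An intermediate point $x'$ can thus satisfy $|x'-y'|\sim|x-y|/c$, which is of the same order as $\beta(y)$, so $|x'-y'|/\beta(y')$ is only $O(1)$ rather than $O(c)$ and Corollary~\ref{A} gives no smallness. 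The correct repair is a case split: if $\alpha(y)\lesssim|x-y|$ for all $\alpha\in\Phi''_+$ (which is the regime $X_{\Phi''}$ actually used in the proof of Theorem~\ref{TLProof}), the Cauchy intermediate points stay within $O(|x-y|)$ of $(x,y)$ and your argument goes through; in the complementary regime where some $\alpha(y)\gtrsim|x-y|$, one instead bounds $R/(\pi''\pi'')$ directly, using $\pi''(x)\pi''(y)\gtrsim|x-y|^{2I}$ together with $|R(x,y)|\leq|W_0|\,|x-y|^{2I}c^d$ from Corollary~\ref{A}, and this already yields the $c^d$ factor without any l'Hospital step. In short: same approach as the paper, correct identification of the soft spot, but the proposed patch needs to distinguish the small-root and large-root regimes rather than trying to preserve $|x'-y'|<Cc\,\beta(y')$ uniformly along the segments.
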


\begin{proof}
This result follows directly from Corollary \ref{A}, Lemma \ref{Bound} and Proposition \ref{Hospital+}.
\end{proof}

\subsection{Proof of Conjecture \ref{TL}}
\begin{theorem}\label{TLProof}
Conjecture \ref{TL} is valid for all complex root systems.
\end{theorem}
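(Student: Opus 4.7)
The plan is to prove the theorem by induction on the number of positive roots $|\Phi_+|$. The base case $\Phi_+=\emptyset$ is trivial: the alternating sum in \eqref{alt} reduces to the identity term and $\Omega$ is the classical Euclidean Poisson kernel, so $P^W=\Omega$. For the inductive step, I would fix a small constant $c>0$ (to be specified below), and for every $(x,y)\in B^+\times S^+$ invoke Lemma \ref{reduc} to obtain a basic root subsystem $\Phi'\subseteq\Phi$ and a constant $c'$ with $c/N^r\le c'\le c$ such that $(x,y)\in S_{\Phi',c'}$. Three cases arise. If $\Phi'=\emptyset$, then $|x-y|<c'\beta(y)$ for every $\beta\in\Phi_+$, so Lemma \ref{side} gives the conjecture directly (provided $c$ is small enough to meet its hypotheses). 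If $\Phi'=\Phi$, then $\alpha(y)\le (N^r/c)|x-y|$ for every $\alpha\in\Phi_+$, and Proposition \ref{smallgamma} applies with $D=N^r/c$, yielding the conjecture with constants uniform in $(x,y)$.

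The substantive case is $\emptyset\subsetneq\Phi'\subsetneq\Phi$, where $|\Phi'_+|<|\Phi_+|$ allows the inductive hypothesis to be applied to the subsystem $\Phi'$. Here the plan is to split the alternating sum \eqref{alt} as $\sum_{w\in W}=\sum_{w\in W'}+\sum_{w\in W_0}$. For the \emph{main term}, applying \eqref{alt} to the $W'$-invariant Poisson kernel together with the inductive hypothesis gives
\begin{align*}
\sum_{w\in W'}\frac{\epsilon(w)}{|x-wy|^d}\asymp\frac{\pi'(x)\pi'(y)}{|x-y|^d\prod_{\alpha\in\Phi'_+}|x-\sigma_\alpha y|^2}.
\end{align*}
Since $\alpha(y)<|x-y|/c'$ for $\alpha\in\Phi'_+$ on $S_{\Phi',c'}$, Lemma \ref{formul} yields $|x-\sigma_\alpha y|\asymp|x-y|$, so the main term is $\asymp\pi'(x)\pi'(y)/|x-y|^{d+2|\Phi'_+|}$. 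Proposition \ref{N} applied with $\Phi''=\Phi'_+$ bounds the \emph{remainder} by $K\,c'^{d}\,\pi'(x)\pi'(y)/|x-y|^{d+2|\Phi'_+|}$, and choosing $c$ sufficiently small forces the remainder to be at most half the lower bound of the main term in absolute value. The full alternating sum is therefore $\asymp\pi'(x)\pi'(y)/|x-y|^{d+2|\Phi'_+|}$.

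To conclude, I would convert this into the desired estimate of $P^W$ via \eqref{alt}. Since $\pi(x)\pi(y)=\pi'(x)\pi'(y)\prod_{\beta\in\Phi_0^+}\beta(x)\beta(y)$, and since Corollary \ref{A} gives $\beta(x)\asymp\beta(y)$ while Lemma \ref{basic simple}(ii) combined with $|x-y|<c'\beta(y)$ gives $|x-\sigma_\beta y|^2\asymp\beta(x)\beta(y)$ for every $\beta\in\Phi_0^+$ on $S_{\Phi',c'}$, substitution yields
\begin{align*}
\frac{P^W(x,y)}{1-|x|^2}\asymp\frac{1}{|x-y|^d\prod_{\alpha\in\Phi_+}|x-\sigma_\alpha y|^2}=\frac{\Omega(x,y)}{1-|x|^2},
\end{align*}
completing the inductive step. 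The main obstacle I anticipate is the consistent choice of the constant $c$: it must be small enough simultaneously for Lemma \ref{side}, for Proposition \ref{smallgamma} to yield usable constants when $\Phi'=\Phi$, and to force the remainder to be dominated by the main term uniformly across all basic root subsystems and all $(x,y)\in B^+\times S^+$. This is possible precisely because the number of basic root subsystems of $\Phi$ is finite and the constants in Propositions \ref{smallgamma} and \ref{N}, together with those provided by the inductive hypothesis, are uniform in $(x,y)$.
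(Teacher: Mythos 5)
Your outline follows the paper's overall strategy: partition $B^+\times S^+$ via Lemma \ref{reduc}, dispatch the extreme cases $\Phi'=\emptyset$ and $\Phi'=\Phi$ with Lemma \ref{side} and Proposition \ref{smallgamma}, and in the intermediate case split the alternating sum into the $W'$-part (inductive hypothesis) and the $W_0$-part (Proposition \ref{N}). However, the step in which you apply Proposition \ref{N} with $\Phi''=\Phi'_+$ and replace every $\phi_\alpha(x,y)$, $\alpha\in\Phi'_+$, by $|x-y|$ has a genuine gap. On $S_{\Phi',c'}$ you only know $\alpha(y)<|x-y|/c'\le(N^r/c)|x-y|$, so the constant in $\phi_\alpha(x,y)\asymp|x-y|$ is of order $N^r/c$ and blows up as $c\to0$. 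When you convert the inductive hypothesis into a lower bound for the main term measured against $\pi'(x)\pi'(y)/|x-y|^{d+2|\Phi'_+|}$, you lose a factor of order $(c/N^r)^{2|\Phi'_+|}$, while Proposition \ref{N} with $\Phi''=\Phi'_+$ gains only a factor of order $c^d$. The ratio thus behaves like $c^{\,d-2|\Phi'_+|}$, and the requirement that the remainder be dominated by the main term fails as $c\to0$ whenever $d\le 2|\Phi'_+|$. This is not an exotic regime: already for $A_3$ in $\R^4$ with $\Phi'$ of rank $2$ one has $d=4<6=2|\Phi'_+|$. No single choice of $c$ can then close the argument uniformly over all basic subsystems.

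The paper fixes this by further subdividing $S_{\Phi',c'}$ into sets $X_{\Phi''}$, $\Phi''\subseteq\Phi'$, determined by whether the simple roots of $\Phi'$ satisfy $\alpha(y)\le 2NC_1|x-y|$ or $\alpha(y)>2C_1|x-y|$. On $X_{\Phi''}$ the roots $\alpha\in\Phi''$ satisfy $\phi_\alpha(x,y)^2\le(1+2NC_1)^2|x-y|^2$, while the roots $\alpha\in\Phi'_+\setminus\Phi''$ satisfy $\phi_\alpha(x,y)^2\le(2+C_1^{-1})^2\,\alpha(x)\alpha(y)$, both with constants that are independent of $c$. Keeping the remainder in the form $\widetilde A=|x-y|^d\prod_{\alpha\in\Phi'_+}\phi_\alpha^2\cdot\sum_{W_0}(\cdot)/(\pi'(x)\pi'(y))$, the $\alpha(x)\alpha(y)$ factors cancel against the corresponding factors in $\pi'/\pi''$, Proposition \ref{N} is invoked with this $(x,y)$-dependent $\Phi''$, and the coefficient of $c^d$ in the resulting bound on $|\widetilde A|$ is $c$-independent --- this is precisely what condition \eqref{cond c2} encodes. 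To repair your proof you should keep the inductive comparison in terms of $\prod_{\alpha\in\Phi'_+}\phi_\alpha(x,y)^2$ rather than $|x-y|^{2|\Phi'_+|}$, and add the $X_{\Phi''}$ refinement so that the choice of $\Phi''$ in Proposition \ref{N} adapts to $(x,y)$.
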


\begin{proof}
We use induction on the rank $r$ of the root system. The result is true in rank 1 by Lemma \ref{A1Rd}.

Assume that it is true for ranks 1,\ldots, $r-1$, $r\geq 2$. Let $\Phi$ be a rank $r$ complex root system. The induction hypothesis means that there exist constants $K_1$ and $K_2$ such that for 
 $x\in B^+,y\in S^+$, we have
\begin{align}
K_1\leq |x-y|^d\,\prod_{\alpha\in \Phi'_+}\phi_\alpha(x,y)^{2}\,\frac{\sum_{w\in W'}\,\frac{\epsilon(w)}{|x-w\,y|^d}}{\pi'(x)\,\pi'(y)}\leq K_2\label{B1}
\end{align}
for every basic root subsystem $\Phi'$ of $\Phi$ which is of lower rank. Since the number of such subsystems is finite, we 
may assume that $K_1$ and $K_2$ are independent from the choice of $\Phi'$ (depend only on $\Phi$).

Fix $c$ such that
\begin{align}\label{cond c1}
0<c\leq \min\{1,\frac{1}{4\,C_1},\frac{\sqrt{K_0/2}}{\sqrt[d]{2\,|W|}}\}
\end{align}
where $C_1$ is as in Remark \ref{C}, $K_0$ is as in Lemma \ref{blocks} and such that
\begin{align}\label{cond c2}
(1+2\,N\,C_1)^{2M}\,(2+C_1^{-1})^{2M}\,K\,c^d\leq K_1/2
\end{align}
where $K$ is as in Proposition \ref{Hospital+} (we consider all the basic root subsystems $\Phi''\subseteq \Phi'$ for all possible basic root subsystems $\Phi' $ of $\Phi$ and take the largest corresponding value of $K$). Recall that $N$ denotes the maximal length of positive roots in $\Phi$
and $M$ is the number of all positive roots in $\Phi$.
By Lemma \ref{reduc}, if $S_{\Phi',c'}$ is as in \eqref{S}, we have
\begin{align*}
B^+\times S^+=\bigcup_{\Phi',c'}\,S_{\Phi',c'}
\end{align*}
where the union is taken over all basic root subsystems $\Phi'$ of $\Phi$ and $c'$ with $c/N^r\leq c'\leq c$.
Since the number of such basic root subsystems is finite, we only have to prove the Conjecture \ref{TL} for a specific choice of $\Phi'$ where $x$, $y$ satisfy
\begin{align}
c'\,\alpha(y)< |x-y|< c'\,\beta(y)\label{this}
\end{align}
for $\alpha\in\Phi'_+$,$\beta\in \Phi_0^+$
and any $c'$ such that $c/N^r\leq c'\leq c$.

Recall that $c$ is fixed. We ensure that the constants in the estimates of the Conjecture \ref{TL} depend only on $c$, when $\Phi'$ is fixed
and $c'$ varies in the segment $[c/N^r,c]$.

We examine three possible cases:
\medskip

\noindent\underline{$\Phi'=\emptyset$}: in that case, Lemma \ref{side} allows us to conclude. 
\medskip

\noindent\underline{$\Phi'=\Phi$}: in that case, the result follows from \eqref{this} and from Proposition \ref{smallgamma} (if we refer to the notation of Proposition 
\ref{smallgamma}, 
$D=1/c'$ and the constants depend on $D$; this is where $c/N^r\leq c'\leq c $ comes into play).
\medskip

\noindent\underline{$1\leq \hbox{rank}(\Phi')< \hbox{rank}(\Phi)$}:
We have $\phi_\beta(x,y)=\beta(y)$ and $\beta(x)\,\beta(y)/2\leq \phi_\beta^2(x,y)\leq 2\,\beta(x)\,\beta(y)$ by Corollary \ref{A}.
Hence, 
\begin{align}\label{mixed}
2^{-M}\,\prod_{\beta\in\Phi_0^+}\,\beta(x)\,\beta(y)\leq \prod_{\beta\in\Phi_0^+}\,\phi_\beta(x,y)^2
\leq 2^{M}\,\prod_{\beta\in\Phi_0^+}\,\beta(x)\,\beta(y).
\end{align} 
We used the fact that $|\Phi_0^+|\leq |\Phi_+|\leq M$.

For a basic subsystem $\Phi''\subseteq \Phi'$, let 
\begin{align*}
X_{\Phi''}&=\{(x,y)\in S_{\Phi',c'}\colon \hbox{$\alpha(y)\leq 2 N\,C_1\,|x-y|$ for $\alpha\in \Phi''_+$
}\\&\qquad\qquad\qquad\qquad\hbox{ 
and $\alpha(y)>2\,C_1\,|x-y|$ for $\alpha\in \Phi'_+\setminus\Phi''$}\}
\end{align*}
and note that the union of all $X_{\Phi''}$ for $\Phi''\subseteq \Phi'_+$ gives $S_{\Phi',c'}$.
Indeed, for given $x$ and $y$, $\Phi''$ is generated by the simple roots $\alpha_i$ such that
$\alpha_i(y)\leq 2 \,C_1\,|x-y|$ (it may happen that $\Phi''=\emptyset$ in which case 
$X_{\Phi''}=\{(x,y)\in S_{\Phi',c'}\colon \alpha(y)>2\,C_1\,|x-y|~ \hbox{for $\alpha\in \Phi'_+$}\}$).

If $(x,y)\in X_{\Phi''}$ and $\alpha\in \Phi''$ then
\begin{align}\label{PhiBis1}
\phi_\alpha(x,y)^2\leq (1+2\,C_1)^2\,|x-y|^2
\end{align} 
If $(x,y)\in X_{\Phi''}$ and $\alpha\in \Phi'\setminus\Phi''$ then $\alpha(x)\geq \alpha(y)-C_1\,|x-y|> \alpha(y)/2$. Hence, $|x-y| < (2\,C_1)^{-1}\,\alpha(y)\leq 2\,(2\,C_1)^{-1}\,\alpha(x)$.
Therefore,
\begin{align}\label{PhiBis2}
\phi_\alpha(x,y)^2\leq (2+C_1^{-1})\,(1+(2\,C_1)^{-1})\,\alpha(x)\,\alpha(y)\leq (2+C_1^{-1})^2\,\alpha(x)\,\alpha(y).
\end{align}

Let
\begin{align*}
A&=\frac{|W|w_d P^W(x,y)}{\Omega(x,y)}=|x-y|^d\,\prod_{\gamma\in \Phi_+}\,\phi_\gamma(x,y)^{2}\,\frac{\sum_{w\in W}\,\frac{\epsilon(w)}{|x-w\,y|^d}}{\pi(x)\,\pi(y)}\\
&=\frac{\prod_{\beta\in \Phi_0^+}\,\phi_\beta(x,y)^{2}}{\prod_{\beta\in \Phi_0^+}\,\beta(x)\,\beta(y)}
\,\left[|x-y|^d\,\prod_{\alpha\in \Phi'_+}\,\phi_\alpha(x,y)^{2}\,\frac{\sum_{w\in W'}\,\frac{\epsilon(w)}{|x-w\,y|^d}}{\pi'(x)\,\pi'(y)}
\right.\\&\qquad\left.
+\underbrace{|x-y|^d\,\prod_{\alpha\in \Phi'_+}\,\phi_\alpha(x,y)^{2}\,\frac{
\sum_{w\in W_0}\,\frac{\epsilon(w)}{|x-w\,y|^d}
}
{\pi'(x)\,\pi'(y)}}_{\widetilde A}\right].
\end{align*}

Now, using \eqref{B1} and \eqref{mixed}, we get
\begin{align}\label{AB}
2^{-M}\,[K_1-|\widetilde A|]\leq A\leq 2^{M}\,[K_2+|\widetilde A|].
\end{align}

Let us fix $\Phi''\subset \Phi'$. For $(x,y)\in X_{\Phi''}$, 
using
inequalities \eqref{PhiBis1} and \eqref{PhiBis2}
and next the condition
\eqref{cond c2} on $c$ and Proposition 
\ref{N},
we obtain

\begin{align*}
|\widetilde A|&\leq
(1+2\,NC_1)^{2\,|\Phi''_+|}\,(2+C_1^{-1})^{2\,|\Phi'_+\setminus\Phi''_+|}\,
\left|\frac{R(x,y)}{\pi''(x)\,\pi''(y)}\right|\\
&\leq (1+2\,NC_1)^{M}\,(2+C_1^{-1})^{M}\,K\,c^d\leq K_1/2.
\end{align*}
This is enough to conclude from \eqref{AB} that
$A\asymp 1$.

\end{proof}

\section{Newton's kernel}\label{sec:Newton}

\begin{theorem}\label{NewtonProof}
Conjecture \ref{TLN} is valid for all complex root systems.
\end{theorem}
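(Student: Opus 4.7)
The plan is to mirror the proof of Theorem \ref{TLProof} step by step, with the exponent $d$ replaced everywhere by $d-2$ (which is $\geq 1$ under the hypothesis $d\geq 3$), and with the factor $(1-|x|^2)$ simply dropped. The starting point is the alternating sum representation
\begin{align*}
N^W(x,y)=\frac{1}{|W|(2-d)w_d\,\pi(x)\pi(y)}\sum_{w\in W}\frac{\epsilon(w)}{|x-wy|^{d-2}},
\end{align*}
valid for $d\geq 3$ in the complex case, which plays the exact role that \eqref{alt} played for $P^W$.

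First I would prove the rank one base case by an analog of Lemma \ref{A1Rd}. Setting $u=|x-y|^2$ and $v=|x-\sigma_\alpha y|^2$, one has $v-u=C\,\alpha(x)\alpha(y)$ by Lemma \ref{basic simple}(ii), and
\begin{align*}
\frac{1/u^{(d-2)/2}-1/v^{(d-2)/2}}{\alpha(x)\alpha(y)}=\frac{v^{(d-2)/2}-u^{(d-2)/2}}{\alpha(x)\alpha(y)\,u^{(d-2)/2}v^{(d-2)/2}}.
\end{align*}
For integer $(d-2)/2$ the factorisation $a^n-b^n=(a-b)\sum_k a^k b^{n-1-k}$ gives the estimate directly; for non-integer exponent a mean value argument applied to $t\mapsto t^{(d-2)/2}$ yields $v^{(d-2)/2}-u^{(d-2)/2}\asymp (v-u)\,v^{(d-4)/2}$ on the set $\{\alpha(x)\alpha(y)>0\}$, where $v\geq u$. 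A routine simplification then produces $\asymp 1/(|x-y|^{d-2}|x-\sigma_\alpha y|^2)$, matching Conjecture \ref{TLN} in rank one.

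Next I would run the induction on the rank using the same subregion decomposition $S_{\Phi',c'}$ of Lemma \ref{reduc}, which depends only on $(x,y)$ and the root system and is insensitive to which kernel is being estimated. Define the Newton remainder
\begin{align*}
R_{\Phi''}(x,y):=|x-y|^{d-2+2|\Phi''|}\sum_{w\in W_0}\frac{\epsilon(w)}{|x-wy|^{d-2}},
\end{align*}
which plays the role of \eqref{Nxy}. The analogs of Lemma \ref{Bound}, Proposition \ref{Hospital+} and Proposition \ref{N} go through verbatim, because the derivative calculation in Lemma \ref{Bound} only used the homogeneous structure of $|x-y|^{d+2I}|x-wy|^{-d}$, and replacing $d$ by $d-2$ has no effect on the mechanism: each application of $\partial_\alpha^y\partial_\alpha^x$ drops the homogeneity by two, and the final estimate takes the form $\sum C\,|x-y|^{(d-2)+i}/|x-wy|^{(d-2)+i}$. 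The Cauchy mean value/l'Hospital argument of Proposition \ref{Hospital+} and the skew-symmetry used to factor out $\pi''(x)\pi''(y)$ are unchanged.

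The three cases of the induction are then:
\begin{itemize}
\item[(i)] $\Phi'=\emptyset$: a Newton analog of Lemma \ref{side} shows that the $w=\mathrm{id}$ term dominates the alternating sum, by choosing $c$ so that $c^{d-2}|W|/(K_0/2)^{(d-2)/2}\leq 1/2$.
\item[(ii)] $\Phi'=\Phi$: here I would substitute for Proposition \ref{smallgamma} a Newton framing bound. The general Dunkl-Newton kernel has a representation \eqref{NewtonDunkl} through $V_k$, so exactly the argument of Proposition \ref{ENC} (as already noted in \cite[Proposition 6.3]{Gallardo2} and Remark \ref{generalDunkl}) provides framing
\begin{align*}
\frac{C}{|x-w_{\max}y|^{d-2+2\kappa}}\leq N^W(x,y)\leq \frac{C}{|x-y|^{d-2+2\kappa}};
\end{align*}
combined with Corollary \ref{A} and the condition $c/N^r\leq c'\leq c$, this yields $N^W(x,y)\asymp 1/(|x-y|^{d-2}\prod_\alpha|x-\sigma_\alpha y|^{2})$ on $S_{\Phi,c'}$.
\item[(iii)] $1\leq\mathrm{rank}(\Phi')<\mathrm{rank}(\Phi)$: split the sum over $W=W'\sqcup W_0$, apply the induction hypothesis to the $W'$-part (which governs a root system of strictly lower rank), and use the analog of Proposition \ref{N} to bound the remainder by $K\,c^{d-2}\pi''(x)\pi''(y)$, chosen small enough to be dominated by the main term.
\end{itemize}

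Since the domain for the Newton kernel is $\overline{\a^+}\times\overline{\a^+}$ rather than $B^+\times S^+$, a preliminary reduction to a compact set is needed; this is achieved by homogeneity, since both $N^W(x,y)$ and the conjectured bound are homogeneous of the same degree $-(d-2+2\gamma)$ under the simultaneous scaling $(x,y)\mapsto (tx,ty)$, $t>0$, so it suffices to estimate on $\{|x|^2+|y|^2=1\}\cap(\overline{\a^+}\times\overline{\a^+})$. The main obstacle, as in the Poisson case, is the $\Phi'=\Phi$ subregion where all $\alpha(y)$ are comparable to $|x-y|$: here the alternating cancellations are delicate and one cannot proceed by induction; Gallardo--Rejeb's framing bound supplies the needed two-sided control precisely in this regime.
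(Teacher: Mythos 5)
Your proposal is correct and follows the same road-map as the paper's proof: replace $d$ by $d-2$ throughout, drop the factor $1-|x|^2$, run the same subregion decomposition from Lemma~\ref{reduc}, use the framing bound coming from \eqref{NewtonDunkl} in the $\Phi'=\Phi$ regime, the analog of Lemma~\ref{side} when $\Phi'=\emptyset$, and the induction-plus-remainder estimate (via Lemma~\ref{Bound}, Proposition~\ref{Hospital+} and Proposition~\ref{N}, which are scale-invariant and unaffected by the shift $d\mapsto d-2$) in the intermediate case. Two minor remarks. First, the rank-one base case does not require splitting on the parity of $d$: the device used in Lemma~\ref{A1Rd}, multiplying numerator and denominator by $|x-\sigma_\alpha y|^{d-2}+|x-y|^{d-2}$ to obtain $(|x-\sigma_\alpha y|^2)^{d-2}-(|x-y|^2)^{d-2}$ with \emph{integer} exponent $d-2\geq 1$, works uniformly and avoids the mean-value case analysis. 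Second, the homogeneity reduction to $\{|x|^2+|y|^2=1\}$ is a clean observation but is not actually needed: the quantity $|x-y|^{d-2}\prod_\alpha\phi_\alpha(x,y)^2\,\sum_{w}\epsilon(w)|x-wy|^{-(d-2)}/(\pi(x)\pi(y))$ is homogeneous of degree $0$ in $(x,y)$, and every step of the argument (the subregions $S_{\Phi',c'}$, Corollary~\ref{A}, the derivative bounds) is already scale-invariant, so the constants in \eqref{B1} are uniform on all of $\overline{\a^+}\times\overline{\a^+}$ without any prior normalization.
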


\begin{proof}
The proof follows the same pattern as the proof  of Theorem \ref{TLProof}
for the Poisson kernel with suitable adjustments.  Given that, unlike in the case of the Poisson kernel, the domain of the Newton kernel is unbounded, so it is worthwhile to review the ``road-map'' of the proof.

We first introduce the concept of the basic subalgebra generated by simple roots in Definition \ref{basic simple}.  The crucial step is then given in Lemma \ref{reduc} where it is shown that given a  Lie algebra $\Phi$ and $c>0$,  a basic subalgebra $\Phi'$ can be found that $ c'\,\alpha(y)< |x-y|<c'\,\beta(y)$ for $\alpha\in\Phi'$ and $\beta\in\Phi\setminus\Phi'$ with $c'$ in an interval bounded above and below by
universal multiples of $c$, namely $c'\in[c/N^r,c]$.

We start by showing that if $|x-y|\leq c\,\alpha(y)$ for every positive root $\alpha$ then Conjecture \ref{TLN} holds provided $c$ is chosen small enough.  The proof is the same as the proof of Proposition \ref{smallgamma} and it only depends on the formula \eqref{NewtonDunkl} analogous
to \eqref{PoissonDunkl}. This proof is valid in the general Dunkl case.

In the other extreme case, when $ c\,\alpha(y)\leq |x-y|$ for every positive root $\alpha$, the Conjecture \ref{TL} in the complex case holds provided that $c$ is small enough, by Lemma \ref{side}.  Once more, the same proof holds for the Newton kernel in the case $d\geq 3$.

It is important to note that the rank 1 case for $d\geq3$ which is handled by Lemma \ref{A1Rd} in the Poisson kernel case, follows practically the same proof in the case of the Newton kernel.

 The rest of the proof of Conjecture \ref{TLN}  follows the same line as that of Theorem \ref{TLProof}.  Indeed, as for the Poisson kernel, we define the subregions of $\overline{\a^+}\times\overline{\a^+}$ in the case of the Newton 
 kernel by
\begin{align*}
S_{\Phi',c}=\{(x,y)\in \overline{\a^+}\times\overline{\a^+} \colon c\,\alpha(y)< |x-y| < c\,\beta(y)
\ {\rm for\ all}\ \alpha\in \Phi'_+, \beta\in \Phi_0^+
\}.
\end{align*}

For a fixed value of $c$, the pair $x$, $y$ will fall in one of the regions $S_{\Phi',c'}$ where $c'$ is in a range bounded above and below by universal multiples of $c$.  Then there are  three cases:
$\Phi'=\Phi$, $\Phi'=\emptyset$ and the ``in between case''.  The first two situations are covered by the arguments given above.  For the ``in between case'', we proceed as in the case of the Poisson kernel and write 
\begin{align*}
A&=\frac{|W|\,(2-d)\,w_d\,N^W(x,y)}{\tilde{\Omega}(x,y)}=|x-y|^{d-2}\,\prod_{\gamma\in \Phi_+}\,\phi_\gamma(x,y)^{2}\,\frac{\sum_{w\in W}\,\frac{\epsilon(w)}{|x-w\,y|^{d-2}}}{\pi(x)\,\pi(y)}\\
&=\frac{\prod_{\beta\in \Phi_0^+}\,\phi_\beta(x,y)^{2}}{\prod_{\beta\in \Phi_0^+}\,\beta(x)\,\beta(y)}
\,\left[|x-y|^{d-2}\,\prod_{\alpha\in \Phi'_+}\,\phi_\alpha(x,y)^{2}\,\frac{\sum_{w\in W'}\,\frac{\epsilon(w)}{|x-w\,y|^{d-2}}}{\pi'(x)\,\pi'(y)}
\right.\\&\qquad\left.
+\underbrace{|x-y|^{d-2}\,\prod_{\alpha\in \Phi'_+}\,\phi_\alpha(x,y)^{2}\,\frac{\sum_{w\in W_0}\,\frac{\epsilon(w)}{|x-w\,y|^{d-2}}}
{\pi'(x)\,\pi'(y)}}_{\widetilde A}\right].
\end{align*}
where $\tilde{\Omega}(x,y)$ refers to the conjectured upper and lower bound for the Newton kernel (as per Conjecture \ref{TLN}).
The end of the proof is the same as in the proof of Theorem \ref{TLProof} and uses, among others, induction on the rank of a subsystem.
\end{proof}

\subsection{Invariant Newton kernel on $\R^2$}
Theorem \ref{main k1} does not cover the case $d=2$. Let us discuss the case of $A_1$ and $A_2$ root systems in $\R^2$. 

We use the normalization $\|\alpha_i\|^2=2$. It will be convenient to use
the functions 
\begin{align}
\psi_{\gamma_1,\gamma_2}(x,y)=2\,\gamma_1(x)\,\gamma_2(y)/|x-y|^2.\label{psi}
\end{align}

\begin{proposition}\label{A1m2}
In the case $A_1$ in $R^2$, we have
\begin{align*}
N^W(x,y)=-\frac{1}{2\,\pi}
\,\frac{1}{|x-y|^2}
\,\frac{\ln(1+\psi_{\alpha,\alpha}(x,y))}{\psi_{\alpha,\alpha}(x,y)}.
\end{align*}
In the case $A_2$ in $R^2$, we have 
\begin{align*} 
N^W(x,y)=-\frac{2}{\,\pi}\,\frac{1}{|x-y|^6} L(x,y),
\end{align*}
where the logarithmic correction factor is given by
\begin{align*}
L(x,y)=
{\ln\frac{(1+\psi_{\alpha,\alpha})\,(1+\psi_{\beta,\beta})(1+\psi_{\alpha+\beta,\alpha+\beta})}{(1+\psi_{\alpha,\alpha}+\psi_{\beta,\beta}+\psi_{\alpha,\beta})\,(1+\psi_{\alpha,\alpha}+\psi_{\beta,\beta}+\psi_{\beta,\alpha})}}.
\end{align*}
\end{proposition}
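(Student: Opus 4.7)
The plan is to proceed by direct computation from the alternating-sum formula for $N^W$ in dimension $d=2$ stated earlier, namely
$$N^W(x,y) = \frac{1}{4\pi\,\pi(x)\,\pi(y)}\sum_{w\in W}\epsilon(w)\ln|x-wy|.$$
The essential ingredient is to rewrite each $|x-wy|^2$ as $|x-y|^2$ times an explicit polynomial in the $\psi_{\gamma_1,\gamma_2}$'s of \eqref{psi}. This uses identity \eqref{useful}, $|x-wy|^2=|x-y|^2+2\langle x,y-wy\rangle$, together with Proposition \ref{XWY}, which expresses $y-wy$ in the simple-root basis. Under the normalization $|\alpha_i|^2=2$, one has $\alpha_i(x)\alpha_j(y)=\tfrac12\psi_{\alpha_i,\alpha_j}(x,y)\,|x-y|^2$, which converts every dot product into $\psi$-form.

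The $A_1$ case is essentially a one-line calculation: $W=\{id,\sigma_\alpha\}$, and Proposition \ref{XWY} gives $|x-\sigma_\alpha y|^2=|x-y|^2(1+\psi_{\alpha,\alpha})$, so $\sum_w\epsilon(w)\ln|x-wy|=-\tfrac12\ln(1+\psi_{\alpha,\alpha})$. Dividing by $\pi(x)\pi(y)=\alpha(x)\alpha(y)=\tfrac12\psi_{\alpha,\alpha}\,|x-y|^2$ and incorporating the $1/(4\pi)$ prefactor yields the stated closed form.

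For $A_2$ the Weyl group $S_3$ has six elements: the identity, the three reflections $\sigma_\alpha$, $\sigma_\beta$, $\sigma_{\alpha+\beta}$ with $\epsilon=-1$, and the two rotations $\sigma_\alpha\sigma_\beta$, $\sigma_\beta\sigma_\alpha$ with $\epsilon=+1$. For each reflection, Proposition \ref{XWY} immediately gives $|x-\sigma_\gamma y|^2=|x-y|^2(1+\psi_{\gamma,\gamma})$ for $\gamma\in\{\alpha,\beta,\alpha+\beta\}$, producing the three numerator factors of $L(x,y)$. For the two rotations one uses the example following Proposition \ref{XWY}: $y-\sigma_\beta\sigma_\alpha y=\alpha(y)A_\alpha+(\alpha(y)+\beta(y))A_\beta$, from which
$$|x-\sigma_\beta\sigma_\alpha y|^2=|x-y|^2\bigl(1+\psi_{\alpha,\alpha}+\psi_{\beta,\beta}+\psi_{\beta,\alpha}\bigr),$$
and the symmetric formula obtained by exchanging $\alpha\leftrightarrow\beta$ applies to $\sigma_\alpha\sigma_\beta$, giving $\psi_{\alpha,\beta}$ in place of $\psi_{\beta,\alpha}$. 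These are exactly the two denominator factors of $L$.

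Assembling the six logarithms with the signs $\epsilon(w)$, the six copies of $\ln|x-y|$ cancel and one obtains $\sum_w\epsilon(w)\ln|x-wy|=-\tfrac12 L(x,y)$ with $L$ exactly as stated. Dividing by $\pi(x)\pi(y)=\alpha(x)\beta(x)(\alpha+\beta)(x)\alpha(y)\beta(y)(\alpha+\beta)(y)$, which factors as $\tfrac18|x-y|^6\psi_{\alpha,\alpha}\psi_{\beta,\beta}\psi_{\alpha+\beta,\alpha+\beta}$, and the $1/(4\pi)$ prefactor completes the computation. The argument is essentially bookkeeping; the only step requiring genuine care is the evaluation of $y-\sigma_\beta\sigma_\alpha y$, which is however a direct specialization of the decomposition already worked out in Proposition \ref{XWY}.
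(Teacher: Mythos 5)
Your proof follows essentially the same approach as the paper: start from the $d=2$ alternating-sum representation of $N^W$, use identity \eqref{useful} together with Lemma \ref{basic simple}(ii) (equivalently, the decomposition of $y-wy$ from Proposition \ref{XWY}) to write each ratio $|x-wy|^2/|x-y|^2$ as $1$ plus a linear combination of $\psi$'s, and then collect the six logarithms with signs $\epsilon(w)$. Your handling of the rotations via the example after Proposition \ref{XWY} is equivalent to the paper's ``applying Lemma \ref{basic simple} repeatedly,'' and your $\psi$-expressions for all six Weyl group elements check out, as does the cancellation of $\ln|x-y|$ using $\sum_w\epsilon(w)=0$.

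Two small bookkeeping remarks, both of which concern the paper's normalization rather than your method. First, if you start literally from the paper's displayed $d=2$ alternating formula with prefactor $\frac{1}{4\pi\pi(x)\pi(y)}$, then $\frac{1}{4\pi}\cdot\bigl(-\tfrac12\ln(1+\psi_{\alpha,\alpha})\bigr)\big/\bigl(\tfrac12\psi_{\alpha,\alpha}|x-y|^2\bigr) = -\frac{1}{4\pi}\frac{\ln(1+\psi_{\alpha,\alpha})}{\psi_{\alpha,\alpha}|x-y|^2}$, which is half the stated constant; the paper's own proof begins from a prefactor of $\frac{1}{2\pi\pi(x)\pi(y)}$, so your ``incorporating the $1/(4\pi)$ prefactor yields the stated closed form'' glosses over an apparent factor-of-two inconsistency in the source. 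Second, your final division by $\pi(x)\pi(y)=\tfrac18|x-y|^6\prod_{\gamma>0}\psi_{\gamma,\gamma}$ is correct and produces a factor $\bigl(\prod_{\gamma>0}\psi_{\gamma,\gamma}\bigr)^{-1}$, which indeed appears in the last line of the paper's own derivation but is missing from the displayed statement of the $A_2$ formula; that is a typo in the statement, and your computation agrees with the paper's derivation rather than with its display. Neither point affects the correctness of your argument's structure.
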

\begin{proof}
In rank 1 case with $d=2$, we have
\begin{align*}
N^W(x,y)&=\frac{1}{2\,\pi}\,\frac{\ln|x-y|-\ln|x-\sigma\,y|}
{\alpha(x)\,\alpha(y)}
=-\frac{1}{4\,\pi}\,\frac{\ln\frac{|x-\sigma\,y|^2}{|x-y|^2}}{\alpha(x)\,\alpha(y)}\\
&=-\frac{1}{2\,\pi}
\,\frac{1}{|x-y|^2}
\,\frac{\ln(1+\psi_{\alpha,\alpha}(x,y))}{\psi_{\alpha,\alpha}(x,y)}.
\end{align*}

In the $A_2$ case,
\begin{align*}
N^W(x,y)&=\frac{1}{2\,\pi}\,\sum_{w\in W}\,\epsilon(w)\,\frac{\ln|x-w\,y|}
{\pi(x)\,\pi(y)}
=\frac{1}{2\,\pi}\,\frac{\ln\frac{|x-y|\,|x-\sigma_\alpha\,\sigma_\beta\,y|\,|x-\sigma_\beta\,\sigma_\alpha\,y|}{|x-\sigma_\alpha\,y|\,|x-\sigma_\beta\,y|\,|x-\sigma_{\alpha+\beta}|}}{\pi(x)\,\pi(y)}\\
&=-\frac{1}{4\,\pi}\,\frac{\ln\frac{|x-\sigma_\alpha\,y|^2\,|x-\sigma_\beta\,y|^2\,|x-\sigma_{\alpha+\beta}|^2}{|x-y|^2\,|x-\sigma_\alpha\,\sigma_\beta\,y|^2\,|x-\sigma_\beta\,\sigma_\alpha\,y|^2}}{\pi(x)\,\pi(y)}\\
&=-\frac{C^3}{4\,\pi}\,\frac{1}{|x-y|^6}
\frac{L(x,y)}{\prod_{\gamma>0}\,\psi_{\gamma,\gamma}(x,y)}
\end{align*}
(using Lemma \ref{basic simple} repeatedly).
\end{proof}

It is noteworthy that, in the $A_1$ case, the behaviour of $N^W(x,y)$ for $\psi_{\alpha,\alpha}(x,y)\leq1$ is comparable with $-\frac{C}{4\,\pi}\,\frac{1}{|x-y|^2}$, and the behaviour for $\psi_{\alpha,\alpha}(x,y)\geq 1$ 
is comparable with $-\frac{C}{4\,\pi}\,\frac{1}{\alpha(x)\alpha(y)}\,\ln \psi_{\alpha,\alpha}(x,y)$.

In the $A_2$ case, the techniques from Section \ref{framing}
imply that for $\alpha(x),\beta(x)\le c|x-y|^2$ one has
$N^W(x,y)\asymp -\frac{1}{|x-y|^6}.$

Note that in the above cases, the estimates
\begin{align*}
N^W(x,y)\asymp \frac{N^{\R^2}(x,y)}{\prod_{\alpha > 0 \ }|x-\sigma_\alpha y|^{2}}
\end{align*}
are not true.

\section{Rank one direct product case $\ZZ_2^J$}\label{sec:product}

The rank one direct product case $\ZZ_2^J$ is an important case of Dunkl analysis, developing intensely in recent years, see e.g. 
\cite{Del, Dz, NS1, NS2, NSS, TX, Xu}. In these papers, the root systems $B_1\times\cdots B_1$ are commonly considered. An explicit formula for the intertwining operator in the latter (non invariant) case was obtained in \cite{Xu}, and generalized in \cite{MYoussfi2} in the case of orthogonal positive root systems. In our approach, we consider equivalently the root systems $A_1\times\cdots A_1$, and the intertwining operator formula obtained in \cite{Sawyer} for the $A_1$ root system. This leads to estimates of the $P^W$ and $N^W$ kernels both in $A_1\times\cdots A_1$ and $B_1\times\cdots B_1$ cases.

The dual Abel transform (i.e.  the $W$-invariant intertwining operator) for $A_1$ can be written as
\begin{align*}
\mathcal{A}(f)(y)=\frac{\Gamma(m)}{(\Gamma(m/2))^2}
\,(y_1-y_2)^{1-m}\,\int_{y_2}^{y_1}\,f(z,y_1+y_2-z)\,((y_1-z)\,(z-y_2))^{m/2-1}\,dz
\end{align*} 
(see \cite{Sawyer}).

We now consider $J$ orthogonal roots in $\RR^d$. The dual of the generalized Abel transform $\overbrace{A_1\times\cdots A_1}^J$ can be written as
\begin{align*}
\mathcal{A}^*(f)(y)&=\left(\prod_{i=1}^J\,\frac{\Gamma(m_i)}{(\Gamma(m_i/2))^2}\right)
\,\prod_{i=1}^J\,(y^{(i)}_1-y^{(i)}_2)^{1-m_i}
\\&\quad
\,\int_{y^{(J)}_2}^{y^{(J)}_1}\dots\,
\int_{y^{(1)}_2}^{y^{(1)}_1}
\,f\left((z^{(i)},y^{(i)}_1+y_2^{(i)}-z^{(i)})_{i=1,\dots,J}\right)
\\&\quad\,\prod_{i=1}^J\,((y^{(i)}_1-z)\,(z-y^{(i)}_2))^{m_i/2-1}\,dz^{(1)}\,\cdots\,dz^{(J)}
\end{align*}
where $m_i$ is the multiplicity of the root acting on $y^{(i)}_1$ and $y^{(i)}_2$.

\begin{remark}
One may have the root system $A_1$ in $\R$ (or in $\R^2$ with $y_1+y_2=0$) or in $\R^d$ where the roots only act on, say, $y_1$ and $y_2$. The same comment applies to $\overbrace{A_1\times\cdots A_1}^J$ in $\R^d$. To minimize the notation, we will only indicate the variables on which the roots act.
\end{remark}

We have
\begin{align*}
\frac{P^W(x,y)}{1-|x|^2}
&=C\,\mathcal{A}^*\left(\frac{1}{(|x-y|^2+2\,\langle x,y-\cdot\rangle)^{d/2+(m_1+\dots+m_J)/2}}\right)(y)\\
&=C'\,\prod_{i=1}^J\,(y^{(i)}_1-y^{(i)}_2)^{1-m}
\\&\quad
\,\int_{y^{(J)}_2}^{y^{(J)}_1}\dots\,
\int_{y^{(1)}_2}^{y^{(1)}_1}
\frac{\prod_{i=1}^J\,((y^{(i)}_1-z^{(i)}))\,(z^{(i)})-y^{(i)}_2))^{m_i/2-1}\,dz^{(1)}\,\cdots\,dz^{(J)}}
{\left(|x-y|^2+2\,\sum_{i=1}^J\,(x^{(i)}_1-x^{(i)}_2)\,(y^{(i)}_1-z^{(i)})\right)^{d/2+(m_1+\dots+m_J)/2}}
\end{align*}
where $m_i>0$ is the root multiplictiy for the $i$-th factor.

With the change of variables, $v^{(i)}=\frac{y^{(i)}_1-z^{(i)}}{y^{(i)}_1-y^{(i)}_2}$,
we find that
\begin{align}
\frac{P^W(x,y)}{1-|x|^2}
&=C'\,\int_{0}^1\dots\,\int_0^1
\,\frac{\prod_{i=1}^J\,(v^{(i)}\,(1-v^{(i)}))^{m_i/2-1}\,dv^{(1)}\,\cdots\,dv^{(J)}}{\left(|x-y|^2+2\,\sum_{i=1}^J\,(x^{(i)}_1-x^{(i)}_2)\,(y^{(i)}_1-y^{(i)}_2)\,v^{(i)}\right)^{d/2+(m_1+\dots+m_J)/2}}.\label{simple}
\end{align}

In a similar fashion, for $d\geq2$, we have
\begin{align*}
N^W(x,y)
&=C\,\mathcal{A}^*\left(\frac{1}{(|x-y|^2+2\,\langle x,y-\cdot\rangle)^{d/2-1+(m_1+\dots+m_J)/2}}\right)(y)\\
&=C'\,\int_{0}^1\dots\,\int_0^1
\,\frac{\prod_{i=1}^J\,(v^{(i)}\,(1-v^{(i)}))^{m_i/2-1}\,dv^{(1)}\,\cdots\,dv^{(J)}}{\left(|x-y|^2+2\,\sum_{i=1}^J\,(x^{(i)}_1-x^{(i)}_2)\,(y^{(i)}_1-y^{(i)}_2)\,v^{(i)}\right)^{d/2-1+(m_1+\dots+m_J)/2}}.
\end{align*}

We want to prove Conjecture \ref{TL} and Conjecture \ref{TLN} for the root system $\overbrace{A_1\times\cdots A_1}^J$ using induction on $J\geq 1$. To accomplish this, we will need the following technical result.
\begin{lemma}\label{reduction}
Suppose $A>0$ and $B>0$. Then
\begin{align*}
T=\int_0^1\,\frac{(u\,(1-u))^{m/2-1}\,du}{(A+B\,u)^M} 
\left\lbrace\begin{array}{ll}
\asymp \frac{1}{A^{M-m/2}\,(A+B)^{m/2}}&\hbox{if $M>m/2$},\\
\asymp\frac{\ln\left(2\,\frac{A+B}{A}\right)}{(A+B)^{m/2}}&\hbox{if $M=m/2$}.
\end{array}
\right.
\end{align*}
\end{lemma}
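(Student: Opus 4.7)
The plan is a direct case analysis of $T$, exploiting the fact that the contribution near $u=1$ is harmless and that the behaviour near $u=0$ splits cleanly according to whether $A$ or $Bu$ dominates inside $A+Bu$. First I would discard the tail near $u=1$: on $[1/2,1]$ we have $u^{m/2-1}\asymp 1$ and $A+Bu\asymp A+B$, so
\begin{equation*}
\int_{1/2}^{1}\frac{(u(1-u))^{m/2-1}}{(A+Bu)^M}\,du\ \asymp\ \frac{1}{(A+B)^M},
\end{equation*}
which in both cases of the statement is dominated by the claimed right-hand side. It therefore suffices to estimate
\begin{equation*}
I\ :=\ \int_0^{1/2}\frac{u^{m/2-1}}{(A+Bu)^M}\,du,
\end{equation*}
since $(1-u)^{m/2-1}\asymp 1$ on $[0,1/2]$.

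Second, I would distinguish the two regimes according to whether $A+Bu$ is essentially constant on $[0,1/2]$. If $B\leq 2A$ then $A+Bu\asymp A$ throughout, so $I\asymp A^{-M}$; combined with $A+B\asymp A$, this matches both $A^{-(M-m/2)}(A+B)^{-m/2}$ and, in the critical case $M=m/2$, the expression $\ln(2(A+B)/A)(A+B)^{-m/2}$, since the logarithm there is $\asymp 1$. If $B>2A$, I would split $I=I_1+I_2$ at the breakpoint $u_0:=A/B\in(0,1/2)$. On $[0,u_0]$ the denominator is $\asymp A^M$, giving $I_1\asymp A^{-M}u_0^{m/2}=A^{-(M-m/2)}B^{-m/2}$; on $[u_0,1/2]$ the denominator is $\asymp(Bu)^M$, giving $I_2\asymp B^{-M}\int_{u_0}^{1/2}u^{m/2-1-M}\,du$.

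Third, I would finish the two subcases of this last integral. When $M>m/2$ the exponent $m/2-1-M$ is strictly less than $-1$, so the integral is dominated by its lower endpoint and evaluates up to a constant to $u_0^{m/2-M}/(M-m/2)$, yielding $I_2\asymp A^{-(M-m/2)}B^{-m/2}$; combined with $I_1$ and with $A+B\asymp B$ in this regime, we obtain $I\asymp A^{-(M-m/2)}(A+B)^{-m/2}$, as claimed. When $M=m/2$ the integral equals $\ln(1/(2u_0))=\ln(B/(2A))$, and $I_1$ is only an $O(1)$ contribution, so $I\asymp B^{-m/2}\ln(B/A)\asymp (A+B)^{-m/2}\ln(2(A+B)/A)$.

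No serious obstacle arises; the argument is essentially a careful bookkeeping exercise. The only mildly delicate point is verifying, in the critical case $M=m/2$, that the natural logarithm $\ln(B/A)$ produced by the integration is uniformly comparable to the logarithm $\ln(2(A+B)/A)=\ln(2+2B/A)$ appearing in the statement: the two are both $\asymp 1$ when $B\lesssim A$ and both $\asymp\ln(B/A)$ when $B\gg A$, so the comparison holds uniformly on $(0,\infty)\times(0,\infty)$.
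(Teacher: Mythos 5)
Your argument is correct and follows essentially the same route as the paper: isolate the tail $[1/2,1]$, split the remaining integral at $u_0 = A/B$ when $B$ dominates $A$, and estimate each piece by replacing $A+Bu$ with whichever of $A$ or $Bu$ is comparable. One small overclaim: when $M>m/2$, the piece $I_2$ is only $\lesssim A^{-(M-m/2)}B^{-m/2}$, not $\asymp$ (it degenerates as $u_0\to 1/2$), but since $I_1$ already supplies the matching lower bound the final comparability of $I$ is unaffected, exactly as in the paper where $T_2$ is computed exactly rather than claimed comparable.
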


\begin{proof}
If $A\geq B/4$ then 
\begin{align*}
\int_0^1\,\frac{(u\,(1-u))^{m/2-1}\,du}{(A+B\,u)^M}
\asymp \int_0^1\,\frac{(u\,(1-u))^{m/2-1}\,du}{A^M}
\asymp \frac{1}{A^M}.
\end{align*}

If $A\leq B/4$ then 
\begin{align*}
T=\int_0^{A/B}+\int_{A/B}^{1/2}+\int_{1/2}^1=T_1+T_2+T_3.
\end{align*}

Now,
\begin{align*}
T_1\asymp \int_0^{A/B}\,\frac{u^{m/2-1}\,du}{A^M}=\frac{2}{m}\,(A/B)^{m/2}\,\frac{1}{A^M}
\asymp\frac{1}{A^{M-m/2}\,B^{m/2}}
\end{align*}
since $u\leq A/B$ and $A\leq A+B\,u\leq A+B\,A/B=2\,A$. Next, we note that
\begin{align*}
T_2\asymp \int_{A/B}^{1/2}\,\frac{u^{m/2-1}\,du}{(B\,u)^M}
&=\frac{1}{B^M}\, \int_{A/B}^{1/2}\,u^{m/2-M-1}\,du\\
&=\frac{1}{(M-m/2)\,B^M}\,((A/B)^{m/2-M}-(1/2)^{m/2-M})\\
&=\frac{1}{M-m/2}\,\left(\frac{1}{A^{M-m/2}\,B^{m/2}}-\frac{(1/2)^{m/2-M}}{B^M}\right)
\end{align*}
if $M>m/2$ since $A/B\leq u\leq 1/2$ and $B\,u\leq A+B\,u\leq 2\,B\,u$. If $M=m/2$ then
\begin{align*}
T_2\asymp \int_{A/B}^{1/2}\,\frac{u^{m/2-1}\,du}{(B\,u)^{m/2}}
&=\frac{1}{B^{m/2}}\, \int_{A/B}^{1/2}\,u^{-1}\,du
=\frac{1}{B^{m/2}}\, \ln((B/A)/2).
\end{align*}
Finally,
\begin{align*}
T_3\asymp \int_{1/2}^1\,\frac{(1-u)^{m/2-1}\,du}{B^M}
&\asymp\frac{1}{B^M}
\end{align*}
since $u\geq 1/2$ and $B/2\leq A+B\,u\leq 2\,B$. The result follows then easily.
\end{proof}

\begin{theorem}\label{Pprod}
We consider the case $\overbrace{A_1\times\cdots A_1}^J$ . Suppose $x$, $y\in\overline{\a^+}$. We have
\begin{align*}
\frac{P^W(x,y)}{1-|x|^2}
\asymp \frac{1}{|x-y|^d\,\prod_{i=1}^J\,|x-\sigma^{(i)}_\alpha\,y|^{m_i}}
\end{align*}
where $\sigma^{(i)}_\alpha$ is the reflection with respect to $\alpha$ acting on the variables $y^{(i)}_1$ and $y^{(i)}_2$.
\end{theorem}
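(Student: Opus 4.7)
Starting from the explicit representation \eqref{simple}, set $A:=|x-y|^2$ and $B_i:=2(x^{(i)}_1-x^{(i)}_2)(y^{(i)}_1-y^{(i)}_2)\ge 0$, so that Lemma \ref{basic simple}(ii) (with the normalization $|\alpha^{(i)}|^2=2$) gives $A+B_i=|x-\sigma^{(i)}_\alpha y|^2$. Writing $m:=\sum_i m_i$ and $M_0:=d/2+m/2$, the theorem becomes equivalent to
\begin{align*}
I_J(A;B_1,\ldots,B_J):=\int_{[0,1]^J}\frac{\prod_{i=1}^J(v^{(i)}(1-v^{(i)}))^{m_i/2-1}\,dv}{(A+\sum_i B_i v^{(i)})^{M_0}}\ \asymp\ \frac{1}{A^{M_0-m/2}\prod_{i=1}^J(A+B_i)^{m_i/2}}.
\end{align*}
I would prove this by induction on $J$, stating the hypothesis for any real $M_0>m/2$ (so that the reduced exponent encountered after integrating in one variable stays admissible). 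The base case $J=1$ is exactly the first alternative of Lemma \ref{reduction}, since $M_0>m_1/2$ is equivalent to $d\ge 1$.

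For the inductive step, write $I_J=\int_0^1(v(1-v))^{m_J/2-1}\,I_{J-1}(A+B_J v;B_1,\ldots,B_{J-1})\,dv$. The upper bound is straightforward: the trivial inequality $A+B_J v+B_i\ge A+B_i$ decouples the product factor of the inductive bound, producing
\begin{align*}
I_{J-1}(A+B_J v;\ldots)\ \le\ \frac{C}{(A+B_J v)^{M_0-m'/2}\,\prod_{i<J}(A+B_i)^{m_i/2}}
\end{align*}
with $m':=\sum_{i<J}m_i$, and since $M_0-m'/2>m_J/2$ a final invocation of Lemma \ref{reduction} on the remaining $v$-integral gives the desired upper bound.

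The lower bound is the main obstacle, because the factor $(A+B_Jv+B_i)^{m_i/2}$ appearing in the induction hypothesis can be much larger than $(A+B_i)^{m_i/2}$ when $B_J\gg A+B_i$ and $v$ is not small, so one cannot simply integrate over all of $[0,1]$. I handle this with a case split on $B_J$ versus $A$. If $B_J\le A$, then $A+B_J v\asymp A$ and $A+B_J v+B_i\asymp A+B_i$ uniformly on $[0,1]$, and the full integral gives the required lower bound at once. If $B_J>A$, restrict to $v\in[0,A/B_J]$: there $A+B_J v\asymp A$ and $A+B_J v+B_i\le 2(A+B_i)$, while $(v(1-v))^{m_J/2-1}\asymp v^{m_J/2-1}$ and $\int_0^{A/B_J}v^{m_J/2-1}\,dv\asymp (A/B_J)^{m_J/2}$. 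Combining these with $B_J\asymp A+B_J$ yields the matching lower bound. Translating back via $A=|x-y|^2$ and $A+B_i=|x-\sigma^{(i)}_\alpha y|^2$ completes the argument; the overall normalizing constant $C'$ in \eqref{simple} only affects the implicit $\asymp$ constants.
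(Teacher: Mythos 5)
Your proposal is correct, and it proves the estimate by the same basic mechanism as the paper (induction on $J$, reducing to a one-dimensional integral via Lemma \ref{reduction}), but the inductive step is organized quite differently. The paper picks the index $j$ maximizing $B_j=2(x^{(j)}_1-x^{(j)}_2)(y^{(j)}_1-y^{(j)}_2)$ and integrates that variable \emph{first}, using the maximality to see that the factor $\bigl(|x-y|^2+\sum_{i\ne j}B_iv^{(i)}+B_j\bigr)^{m_j/2}$ produced by Lemma \ref{reduction} is uniformly comparable to $(|x-y|^2+B_j)^{m_j/2}$ (since $\sum_{i\ne j}B_iv^{(i)}\le (J-1)B_j$); this factor can then be pulled out and the remaining $(J-1)$-fold integral is exactly the lower-rank case. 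You instead integrate an arbitrary variable \emph{last}, apply a strengthened induction hypothesis --- valid for any exponent $M_0>m/2$, a clean device that decouples the statement from the dimension $d$ --- to the inner $(J-1)$-fold integral, and then estimate the outer integral by hand with a dichotomy $B_J\le A$ versus $B_J>A$. The dichotomy is genuinely needed in your route because, without the maximality assumption, the cross terms $(A+B_Jv+B_i)^{m_i/2}$ can overshoot $(A+B_i)^{m_i/2}$ for $v$ bounded away from $0$; restricting to $v\le A/B_J$ is the right fix. In short, the paper trades a case split for an ordering of the indices; you trade an ordering for a case split and a slightly more general induction hypothesis. Both are valid, and all constants depend only on $J$, the $m_i$, and $M_0$, as required. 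One small point worth stating explicitly is that $B_i\ge 0$ because $x,y\in\overline{\a^+}$, and that $m_i>0$ ensures convergence of the beta-type integrals at the endpoints.
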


\begin{proof}
We have $|x-\sigma_\alpha\,y|^2\asymp |x-y|^2+2\,(x_1-x_2)\,(y_1-y_2)$ (refer to Lemma \ref{basic simple}).

We use induction on $J\geq 1$. When $J=1$, the result follows using \eqref{simple}, Lemma \ref{reduction} with $A=|x-y|^2$, $B=2\,(x^{(1)}_1-x^{(1)}_2)\,(y^{(1)}_1-y^{(1)}_2)$ and $M=d/2+m_1/2$.

Assume that the result holds for $J-1$, $J\geq 2$. There exists $j$ such that $(x^{(i)}_1-x^{(i)}_2)\,(y^{(i)}_1-y^{(i)}_2)
\leq (x^{(j)}_1-x^{(j)}_2)\,(y^{(j)}_1-y^{(j)}_2)$ for all $i$. To fix things, say $j=J$.

Use \eqref{simple}, Lemma \ref{reduction} with $A=|x-y|^2+2\,\sum_{i=1}^{J-1}\,(x^{(i)}_1-x^{(i)}_2)\,(y^{(i)}_1-y^{(i)}_2)\,v^{(i)}$, 
$B=2\,(x^{(J)}_1-x^{(J)}_2)\,(y^{(J)}_1-y^{(J)}_2)$ and $M=d/2+(m_1+\dots+m_J)/2$, integrating with respect to $v^{(J)}$. We find that
\begin{align*}
\lefteqn{\frac{P^W(x,y)}{1-|x|^2}}\\
&\asymp\int_{0}^1\dots\,\int_0^1
\,\frac{\prod_{i=1}^J\,(v^{(i)}\,(1-v^{(i)}))^{m_i/2-1}\,dv^{(1)}\,\cdots\,dv^{(J-1)}}
{\left(|x-y|^2+2\,\sum_{i=1}^{J-1}\,(x^{(i)}_1-x^{(i)}_2)\,(y^{(i)}_1-y^{(i)}_2)\,v^{(i)}\right)^{d/2+(m_1+\dots+m_{J-1})/2}}
\\\qquad&
\frac{1}
{\left(|x-y|^2+2\,\sum_{i=1}^{J-1}\,(x^{(i)}_1-x^{(i)}_2)\,(y^{(i)}_1-y^{(i)}_2)\,v^{(i)}+2\,(x^{(J)}_1-x^{(J)}_2)\,(y^{(J)}_1-y^{(J)}_2)\right)^{m_J/2}}
\\&\asymp
\frac{1}
{\left(|x-y|^2+2\,(x^{(J)}_1-x^{(J)}_2)\,(y^{(J)}_1-y^{(J)}_2)\right)^{m_J/2}}
\\\qquad&
\,\int_{0}^1\dots\,\int_0^1
\,\frac{\prod_{i=1}^J\,(v^{(i)}\,(1-v^{(i)}))^{m_i/2-1}\,dv^{(1)}\,\cdots\,dv^{(J-1)}}
{\left(|x-y|^2+2\,\sum_{i=1}^{J-1}\,(x^{(i)}_1-x^{(i)}_2)\,(y^{(i)}_1-y^{(i)}_2)\,v^{(i)}\right)^{d/2+(m_1+\dots+m_{J-1})/2}}\\
\\&\asymp 
\frac{1}{|x-y|^d\,\prod_{i=1}^J\,|x-\sigma^{(i)}_\alpha\,y|^{m_i}}
\end{align*}
(the last step follows using the induction hypothesis).
\end{proof}

\begin{theorem}
We consider the case $\overbrace{A_1\times\cdots A_1}^J$ . Suppose $x$, $y\in\overline{\a^+}$. For $d\geq 3$ we have
\begin{align*}
N^W(x,y)
\asymp \frac{1}{|x-y|^{d-2}\,\prod_{i=1}^J\,|x-\sigma^{(i)}_\alpha\,y|^{m_i}}.
\end{align*}
\end{theorem}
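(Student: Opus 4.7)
The plan is to mirror the inductive proof of Theorem \ref{Pprod} for the Poisson kernel, starting from the explicit formula
\begin{align*}
N^W(x,y)
\asymp \int_{0}^1\!\!\!\dots\!\int_0^1
\frac{\prod_{i=1}^J(v^{(i)}(1-v^{(i)}))^{m_i/2-1}\,dv^{(1)}\cdots dv^{(J)}}{\left(|x-y|^2+2\sum_{i=1}^J(x^{(i)}_1-x^{(i)}_2)(y^{(i)}_1-y^{(i)}_2)\,v^{(i)}\right)^{d/2-1+(m_1+\dots+m_J)/2}}
\end{align*}
already derived above. The integrand differs from the Poisson one only by replacing the exponent $d/2+\sum m_i/2$ with $d/2-1+\sum m_i/2$; the hypothesis $d\geq 3$ ensures that this exponent strictly exceeds each $m_i/2$, so Lemma \ref{reduction} still applies at every step.

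Base case $J=1$: apply Lemma \ref{reduction} with $A=|x-y|^2$, $B=2(x^{(1)}_1-x^{(1)}_2)(y^{(1)}_1-y^{(1)}_2)$ and $M=d/2-1+m_1/2>m_1/2$. This yields
\begin{align*}
N^W(x,y)\asymp \frac{1}{|x-y|^{d-2}(|x-y|^2+B)^{m_1/2}}\asymp \frac{1}{|x-y|^{d-2}\,|x-\sigma_\alpha y|^{m_1}},
\end{align*}
where the last step uses Lemma \ref{basic simple}(ii).

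Inductive step: relabel so that the index $J$ maximizes $(x^{(i)}_1-x^{(i)}_2)(y^{(i)}_1-y^{(i)}_2)$ and integrate out $v^{(J)}$ via Lemma \ref{reduction}, taking $A=|x-y|^2+2\sum_{i=1}^{J-1}(x^{(i)}_1-x^{(i)}_2)(y^{(i)}_1-y^{(i)}_2)v^{(i)}$, $B=2(x^{(J)}_1-x^{(J)}_2)(y^{(J)}_1-y^{(J)}_2)$ and $M=d/2-1+\sum_{i=1}^J m_i/2$. Because $B$ dominates all the other $B_i$'s, the factor $(A+B)^{m_J/2}$ is uniformly comparable to $(|x-y|^2+B)^{m_J/2}\asymp|x-\sigma^{(J)}_\alpha y|^{m_J}$ and can be pulled out of the remaining integral. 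The leftover $(J-1)$-fold integral is exactly the expression for $N^W$ associated with $J-1$ factors of $A_1$ in the same ambient dimension $d$, so the induction hypothesis gives it as $|x-y|^{-(d-2)}\prod_{i=1}^{J-1}|x-\sigma^{(i)}_\alpha y|^{-m_i}$. Multiplying the two factors produces the claimed bound.

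The main (and only) delicate point is the bookkeeping on the exponent after each integration: after integrating out $v^{(J)}$ the effective exponent becomes $d/2-1+\sum_{i<J}m_i/2$, and one must check that this again exceeds $m_i/2$ for every remaining $i$, so that the first branch of Lemma \ref{reduction} (the power-law case, not the logarithmic one) continues to apply all the way down to $J=1$. This amounts to $d/2-1>0$, which is precisely the assumption $d\geq 3$; the case $d=2$ would force the logarithmic branch at the last step and produce the kind of correction already seen in Proposition \ref{A1m2}, consistent with the statement's restriction to $d\geq 3$.
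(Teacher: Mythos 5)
Your proof is correct and follows exactly the route the paper indicates (the paper's own proof consists of the single remark that "the proof is basically the same as for Theorem~\ref{Pprod}"); you mirror the inductive argument from the Poisson case, substituting $M=d/2-1+\sum m_i/2$ and correctly identifying that $d\ge 3$ is precisely the condition guaranteeing $M>m_i/2$ at every stage so that the power-law branch of Lemma~\ref{reduction} always applies.
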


\begin{proof}
The proof is basically the same as for Theorem \ref{Pprod}.
\end{proof}

We proceed with handling the case $d=2$.

\begin{proposition}
The Newton kernel in the case of $A_1$ in $\R^2$ satisfies
\begin{align}
N^W(x,y)
\asymp \frac{\ln\,\left(2\,\frac{|x-\sigma_\alpha\,y|^2}{|x-y|^2}\right)}{|x-\sigma_\alpha\,y|^m}.\label{A1gen}
\end{align}
The Newton kernel in the case of $A_1\times A_1$ in $\R^2$ satisfies 
\begin{align}
\frac{\ln\,\left(2\,\frac{|x-\sigma^{(1)}_\alpha\,y|\wedge |x-\sigma^{(2)}_\alpha\,y|}{|x-y|^2}\right)}{|x-\sigma^{(1)}_\alpha\,y|^{m_1}\,|x-\sigma^{(2)}_\alpha\,y|^{m_2}}\label{A1gen2}
\end{align}
where $a\wedge b=\min\{a,b\}$.
\end{proposition}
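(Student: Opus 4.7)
The plan is to adapt the Abel-transform technique used in Section~\ref{sec:product} for $d\geq 3$ to the two-dimensional setting and then to apply Lemma~\ref{reduction} in its equality case, which is precisely what generates the logarithmic factor.

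First I would establish, in direct analogy with the derivation given earlier in Section~\ref{sec:product}, that for $d=2$ one has
\begin{align*}
N^W(x,y)\asymp \int_0^1 \cdots \int_0^1 \frac{\prod_{i=1}^J (v^{(i)}(1-v^{(i)}))^{m_i/2-1}\,dv^{(1)}\cdots dv^{(J)}}{\Bigl(|x-y|^2 + 2\sum_{i=1}^J (x^{(i)}_1-x^{(i)}_2)(y^{(i)}_1-y^{(i)}_2)\, v^{(i)}\Bigr)^{(m_1+\cdots+m_J)/2}},
\end{align*}
obtained by inserting $d=2$ into the exponent $d/2-1+(m_1+\cdots+m_J)/2$ that appeared in the $d\geq 3$ derivation. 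The subtlety is that the classical formula \eqref{NewtonDunkl} degenerates at $d=2$ (the factor $((d-2)/2)_\gamma/(d-2)$ is of $0/0$ type), but the heat-semigroup representation $N^W(x,y)=\int_0^\infty p^W_t(x,y)\,dt$ combined with the $A_1$ intertwining-operator formula from \cite{Sawyer} gives the above integral without difficulty, either by direct computation after inserting the heat kernel or by analytic continuation in $d$.

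For the $A_1$ case ($J=1$), I would apply Lemma~\ref{reduction} with $A=|x-y|^2$, $B=2(x_1-x_2)(y_1-y_2)$ and $M=m/2$. Since exactly $M=m/2$, we land in the equality case of the lemma and obtain
\begin{align*}
N^W(x,y)\asymp \frac{\ln(2(A+B)/A)}{(A+B)^{m/2}}.
\end{align*}
Lemma~\ref{basic simple}(ii) then gives $A+B\asymp |x-\sigma_\alpha y|^2$, which produces \eqref{A1gen} immediately. As a consistency check, setting $m=2$ (the complex case) recovers Proposition~\ref{A1m2} in both of the regimes $\psi_{\alpha,\alpha}\lesssim 1$ and $\psi_{\alpha,\alpha}\gtrsim 1$.

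For the $A_1\times A_1$ case ($J=2$), I would reduce to $J=1$ by integrating out the ``larger'' variable first. Without loss of generality assume $(x_1^{(1)}-x_2^{(1)})(y_1^{(1)}-y_2^{(1)})\leq (x_1^{(2)}-x_2^{(2)})(y_1^{(2)}-y_2^{(2)})$, which by Lemma~\ref{basic simple}(ii) is equivalent to $|x-\sigma_\alpha^{(1)} y|\leq |x-\sigma_\alpha^{(2)} y|$. Fixing $v^{(1)}$ and integrating $v^{(2)}$ via Lemma~\ref{reduction} with $M=(m_1+m_2)/2>m_2/2$ puts us into the non-degenerate first case of the lemma and yields the factor $1/(A^{m_1/2}(A+B)^{m_2/2})$; the ordering assumption together with $v^{(1)}\leq 1$ forces $A+B\asymp |x-\sigma_\alpha^{(2)} y|^2$. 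The remaining $v^{(1)}$-integral is now exactly of the $J=1$ form, and one further application of Lemma~\ref{reduction} in its equality case produces $\ln(2|x-\sigma_\alpha^{(1)} y|^2/|x-y|^2)/|x-\sigma_\alpha^{(1)} y|^{m_1}$. Combining the two factors gives \eqref{A1gen2}, with $|x-\sigma_\alpha^{(1)} y|\wedge|x-\sigma_\alpha^{(2)} y|$ identified as $|x-\sigma_\alpha^{(1)} y|$ via the labelling choice.

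The main obstacle is the first step: a careful justification of the integral representation at $d=2$, where \eqref{NewtonDunkl} literally breaks down. Once this representation is in hand the rest is a mechanical two-fold iteration of Lemma~\ref{reduction}, with the crucial structural feature that the dimension-two exponent $d/2-1+m_i/2$ reduces to $m_i/2$, matching the weight exponent in the final integration and thereby forcing the logarithmic (equality) case of the lemma, which explains why the estimate can no longer be purely polynomial.
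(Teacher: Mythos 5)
Your proof is correct and follows essentially the same route as the paper: the paper likewise applies Lemma~\ref{reduction} to the $d=2$ integral representation (asserted there for $d\geq2$), integrating the variable with the larger reflected distance first in the $A_1\times A_1$ case, so that the outer integration lands in the first case of the lemma and the inner one in the logarithmic $M=m/2$ case. The only differences are cosmetic: you add a careful justification of the $d=2$ representation (the paper takes it for granted), you use the correct exponent $M=(m_1+m_2)/2$ where the paper's proof contains the typo $M=m_1+m_2$, and you spell out why $A+B\asymp|x-\sigma^{(2)}_\alpha y|^2$ uniformly in $v^{(1)}$, which the paper leaves implicit.
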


\begin{proof}
Equation \eqref{A1gen} follows directly from Lemma \ref{reduction} with $M=m_1/2$.

To obtain equation \eqref{A1gen2}, we can assume without loss of generality that $(x^{(1)}_1-x^{(1)}_2)\,(y^{(1)}_1-y^{(1)}_2)\leq (x^{(2)}_1-x^{(2)}_2)\,(y^{(2)}_1-y^{(2)}_2)$. Apply Lemma \ref{reduction} to the integral representing $N^W(x,y)$ with $M=m_1+m_2$, $A=|x-y|^2+2\,(x^{(1)}_1-x^{(1)}_2)\,(y^{(1)}_1-y^{(1)}_2)\,u^{(1)}$ and 
$B=2\,(x^{(2)}_1-x^{(2)}_2)\,(y^{(2)}_1-y^{(2)}_2)$, integrating with respect to $u^{(2)}$. We obtain
\begin{align*}
N^W(x,y)
\asymp\frac{1}{|x-\sigma^{(2)}_\alpha\,y|^{m_2}}
\,\int_0^1\,\frac{(u^{(1)}\,(1-u^{(1)}))^{m/2-1}\,du^{(1)}}{(|x-y|^2+2\,(x^{(1)}_1-x^{(1)}_2)\,(y^{(1)}_1-y^{(1)}_2)\,u^{(1)})^{m/2}}.
\end{align*}

If we apply Lemma \ref{reduction} with $M=m_1$, we get
\begin{align*}
N^W(x,y)
\asymp\frac{1}{|x-\sigma^{(2)}_\alpha\,y|^{m_2}}
\,\frac{\ln\,\left(2\,\frac{|x-\sigma^{(1)}_\alpha\,y|^2}{|x-y|^2}\right)}{|x-\sigma^{(1)}_\alpha\,y|^m}.
\end{align*}
\end{proof}

We end this Section by a formulation of the results for the systems 
$\overbrace{B_1\times\cdots B_1}^J$ acting on $\R^d,\, d\ge J$, i.e. the symmetries are $\sigma_\beta^{(i)}(y)=(y_1,\ldots,-y_i,y_{i+1},\ldots, y_d)$ for $i\le J$.
\begin{corollary}
Consider the case $\overbrace{B_1\times\cdots B_1}^J$.   Suppose $x$, $y\in\overline{\a^+}$.
\begin{enumerate}
\item We have
\begin{align*}
\frac{P^W(x,y)}{1-|x|^2}
\asymp \frac{1}{|x-y|^d\,\prod_{i=1}^J\,|x-\sigma^{(i)}_\beta\,y|^{m_i}}
\end{align*}

\item For $d\geq 3$, we have
\begin{align*}
N^W(x,y)
\asymp \frac{1}{|x-y|^{d-2}\,\prod_{i=1}^J\,|x-\sigma^{(i)}_\beta\,y|^{m_i}}.
\end{align*}
\end{enumerate}
\end{corollary}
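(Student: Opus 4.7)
The plan is to mimic the derivation of Theorem~\ref{Pprod} and its Newton-kernel counterpart, replacing the $A_1$ dual Abel transform by its $B_1$ analogue. Symmetrizing the standard rank-one R\"osler formula (compare \cite{Xu, MYoussfi2}), the $W$-invariant intertwining operator for a single $B_1$ on $\RR$ with multiplicity $m$ takes, for an even $f$ and $y>0$, the shape
\[
\mathcal{A}^{*}(f)(y)=C_{m}\int_{0}^{1} f\bigl(y(1-v)\bigr)\,\bigl(v(2-v)\bigr)^{m/2-1}\,dv,
\]
whose weight is uniformly comparable with $v^{m/2-1}$ on $[0,1]$. Tensoring over the $J$ orthogonal factors gives the dual Abel transform for $\overbrace{B_1\times\cdots\times B_1}^J$ acting on $\RR^d$.

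Inserting this representation into \eqref{PoissonDunkl} (and into \eqref{NewtonDunkl} for the Newton case with $d\geq 3$) and using the substitution $z_i=y_i(1-v_i)$, so that $\langle x,y-z\rangle=\sum_{i=1}^J v_i x_i y_i$, one obtains
\[
\frac{P^W(x,y)}{1-|x|^2}
\asymp\int_0^1\!\!\cdots\!\int_0^1
\frac{\prod_{i=1}^{J}\bigl(v_i(2-v_i)\bigr)^{m_i/2-1}\,dv_1\cdots dv_J}
{\bigl(|x-y|^2+2\sum_{i=1}^J v_i x_i y_i\bigr)^{d/2+\kappa}},
\]
with $\kappa=\frac{1}{2}\sum_{i=1}^J m_i$, and the analogous expression for $N^W$ with the exponent shifted to $d/2-1+\kappa$. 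By Lemma~\ref{basic simple}(ii) applied to $\beta^{(i)}=e_i$ (so $|\beta^{(i)}|=1$) one has $|x-\sigma_\beta^{(i)}y|^2=|x-y|^2+4x_iy_i$, so this integral has exactly the shape of \eqref{simple}, the $A_1$ coupling $2(x_1^{(i)}-x_2^{(i)})(y_1^{(i)}-y_2^{(i)})$ being replaced by $2x_iy_i$ and $|x-\sigma_\beta^{(i)}y|^2$ differing from its $A_1$ counterpart only by bounded constants.

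Consequently the induction on $J$ from the proof of Theorem~\ref{Pprod} transfers almost verbatim. For the inductive step I would relabel indices so that $x_Jy_J=\max_{i}x_iy_i$ and apply Lemma~\ref{reduction} in the $v_J$-variable with $A=|x-y|^2+2\sum_{i<J}v_ix_iy_i$, $B=2x_Jy_J$, and $M=d/2+\kappa$; the hypothesis $M>m_J/2$ holds trivially, so the lemma yields a factor $(A+B)^{-m_J/2}\asymp |x-\sigma_\beta^{(J)}y|^{-m_J}$ multiplying a $(J-1)$-dimensional integral, to which the induction hypothesis applies. The Newton case with $d\geq 3$ runs identically with $M$ reduced by one, which still satisfies $M>m_J/2$. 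No substantive obstacle arises, since the argument is a structural copy of the $A_1^J$ proof; the only real task is to record the $B_1$ invariant intertwining formula in the form above, which is standard.
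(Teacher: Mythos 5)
Your proof is correct, and it takes a genuinely different route from the paper. The paper does not re-derive the integral representation for $B_1^J$; instead, the opening of Section~\ref{sec:product} states that the $A_1\times\cdots\times A_1$ and $B_1\times\cdots\times B_1$ settings are treated ``equivalently'' — a single $A_1$ restricted to the hyperplane $y_1^{(i)}+y_2^{(i)}=0$ is, up to the orthogonal change of variables $u\mapsto ((y_1-y_2)/\sqrt2,(y_1+y_2)/\sqrt2)$ and a rescaling of the root, the same Dunkl structure as $B_1$ on $\RR$. Since $W$-invariant Dunkl Laplacians, Poisson/Newton kernels, Weyl chambers, the quantities $|x-y|$ and $|x-\sigma y|$, and the multiplicities all transport through an orthogonal equivalence, the $B_1^J$ corollary is read off directly from Theorem~\ref{Pprod} and its Newton analogue with no new computation. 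Your approach re-derives everything from scratch using the $\mathbb{Z}_2$ intertwining operator, which is self-contained but longer; the paper's route is shorter and avoids repeating the induction.

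Two small points worth tightening in your write-up. First, when you insert the Poisson/Newton kernel into your ``even-$f$'' form of $\mathcal{A}^*$, the integrand is not even in $z$; the honest calculation is over $t\in[-1,1]$ with weight $(1-t^2)^{m/2-1}$, i.e.\ $v=1-t\in[0,2]$ with weight $(v(2-v))^{m/2-1}$, and one then observes that the contribution from each $v_i\in[1,2]$ is dominated (for $x,y\in\overline{\a^+}$, so $x_iy_i\ge0$) by the reflected contribution from $[0,1]$, giving the $\asymp$ you assert. You implicitly use this by writing $\asymp$, but the justification should be stated. Second, Lemma~\ref{reduction} is proved for the weight $(u(1-u))^{m/2-1}$, which behaves differently near $u=1$ than $(u(2-u))^{m/2-1}$; however, the splitting $T_1+T_2+T_3$ in its proof shows the conclusion only depends on the $u^{m/2-1}$ singularity at $0$ and on integrability near $1$, both of which the $B_1$ weight shares, so the lemma indeed transfers — this deserves a one-line remark rather than a silent substitution.
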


\section{Applications to stochastic processes}\label{stochastic}

From a probabilistic point of view, the formula \eqref{operator}
\begin{equation*}
\Delta^W f= \pi^{-1}\, \Delta^{\R^d} (\pi\, f),
\end{equation*}
gives the generator of the Doob $h$-transform (refer to \cite{ry}) of the Brownian Motion on $\R^d$ with the excessive function $h(x)=\pi(x)$. For the root system $A_{d-1}$ on $\R^d$, the operator $\Delta^W$ is the generator of the Dyson Brownian Motion on $\R^d$ (\cite{Dyson}), {\it i.e.} the $d$ Brownian independent particles $B^{(1)}_t,\ldots,B^{(d)}_t $ conditioned not to collide.
More generally, for any root system $\Phi$ on $\R^d$, the construction of a Dyson Brownian Motion $D_t^\Phi$ as a Brownian Motion conditioned not to touch the walls of the positive Weyl chamber, can be done (\cite{Grabiner}).
It is known that $\pi$ is $\Delta_{\R^d}$-harmonic on $\R^d$ (\cite{Grabiner}), so in particular $\pi$ is excessive.

Dyson Brownian Motion $D_t^\Phi$
is one of the most important models of non-colliding particles (see e.g. \cite{AK, AV, Katori}).
In \cite{AV}, $W$-invariant Dunkl processes are called  multivariate Bessel processes.

The only difference with the symmetric flat complex case is that no invariant measure $\pi^2(y)\,dy$ appears for the integral kernels in the Dyson Brownian Motion case.

Theorem \ref{main k1}   implies estimates for the Poisson kernel $P^D$ and the Newton kernel $N^D$ for the Dyson Brownian Motion. These estimates 
are essential for the potential theory of the process $D_t^\Phi$ and, consequently, of non-colliding stochastic particles.
 
\begin{corollary}
\label{Dyson}
1.
For $x\in B^+$ and $y\in S^+$ we have
\begin{align*}
P^D(x,y)\asymp \frac{(1-|x|^2)\pi^2(y)}{|x-y|^d\prod_{\alpha\in \Phi_+} |x-\sigma_\alpha y|^{2 }}
\end{align*}

2. 
For $x, y\in \overline{\a^+}$ and $d\geq 3$, we have
\begin{align*}
N^D(x,y)\asymp \frac{\pi^2(y)}{|x-y|^{d-2}\,\prod_{\alpha\in \Phi_+}|x-\sigma_\alpha y|^{2}}.
\end{align*}

\end{corollary}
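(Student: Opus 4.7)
The plan is to reduce Corollary \ref{Dyson} to Theorem \ref{main k1} by expressing the Poisson and Newton kernels of the Dyson Brownian Motion $D_t^\Phi$ as $\pi^2(y)$ times their $W$-invariant Dunkl counterparts $P^W$ and $N^W$, up to constants. The bridge is the classical observation that $D_t^\Phi$ is the Doob $h$-transform, with $h=\pi$, of the standard Brownian motion killed upon leaving the open positive Weyl chamber (and therefore also upon leaving $B^+$ for the ball problem). Because $\pi$ is $\Delta^{\R^d}$-harmonic and vanishes precisely on the walls of $\overline{\a^+}$, the $h$-transform is well-defined, and its generator, restricted to $W$-invariant functions on $B^+$, coincides with $\Delta^W$ via the intertwining formula \eqref{operator}, $\Delta^W f = \pi^{-1}\,\Delta^{\R^d}(\pi\,f)$.

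The first step is to translate \eqref{operator} into a kernel identity. Given $f\in\mathcal C_W(S)$, the function $u(x)=\int_S P^W(x,y) f(y)\,\pi^2(y)\,dy$ solves $\Delta^W u=0$ in $B$ with boundary data $f$. By \eqref{operator}, $\widetilde u(x):=\pi(x)u(x)$ is ordinary harmonic on $B^+$, vanishes on the walls of the Weyl chamber (because $\pi$ does), and equals $\pi\,f$ on $S^+$; thus $\widetilde u(x)=\int_{S^+} P^{B^+}(x,y)\,\pi(y) f(y)\,dy$ where $P^{B^+}$ is the Poisson kernel of the Euclidean Laplacian on $B^+$ with Dirichlet conditions on the walls. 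Comparing the two representations and using $W$-invariance of the integrand, one obtains
\begin{align*}
P^{B^+}(x,y) \;=\; |W|\,\pi(x)\,\pi(y)\,P^W(x,y),\qquad x\in B^+,\ y\in S^+.
\end{align*}
The Doob $h$-transform by $h=\pi$ multiplies the Poisson kernel of the killed process by $\pi(y)/\pi(x)$, hence
\begin{align*}
P^D(x,y) \;=\; \frac{\pi(y)}{\pi(x)}\,P^{B^+}(x,y) \;=\; |W|\,\pi^2(y)\,P^W(x,y),
\end{align*}
and Theorem \ref{main k1} with $k(\alpha)=1$ yields the first estimate. The Newton kernel proceeds identically: applying the $h$-transform to the heat kernel $p^{B^+}_t$ of killed Brownian motion (equivalently, applying \eqref{operator} to the Green function on $B^+$ or using the reflection principle for the Euclidean Green function on the Weyl cone) gives $N^D(x,y)= |W|\,\pi^2(y)\,N^W(x,y)$, and Theorem \ref{main k1} supplies the two-sided bound on $N^W$ for $d\ge 3$.

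The main obstacle in this proof is bookkeeping rather than analysis: one must verify carefully that the identification of $D_t^\Phi$ as a Doob $h$-transform translates precisely, with the right factor $\pi(y)/\pi(x)$, to an identity between $P^D$ (respectively $N^D$) as kernels against Lebesgue measure on $S^+$ (resp.\ on $\overline{\a^+}$) and $P^W$ (respectively $N^W$) as kernels against the weighted measure $\pi^2(y)\,dy$. Once this reduction is performed, no new estimate is needed: the corollary is immediate from Theorem \ref{main k1}. Finally, the statement for Brownian motions in any Weyl chamber alluded to in the abstract follows by the same argument applied to the appropriate unbounded Doob transform, simply replacing the ball $B^+$ by the Weyl chamber.
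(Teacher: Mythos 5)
Your proof is correct and rests on the same key identity the paper invokes implicitly: $P^D(x,y)$ (resp.\ $N^D(x,y)$) equals a constant times $\pi^2(y)\,P^W(x,y)$ (resp.\ $\pi^2(y)\,N^W(x,y)$), after which Theorem~\ref{main k1} immediately gives the two-sided bounds. The paper's one-line justification is simply that the Dyson kernels are the $\Delta^W$-kernels written against Lebesgue measure $dy$ instead of the reference measure $\pi^2(y)\,dy$; you reach the same identity by the slightly longer route of first expressing the killed Euclidean Poisson (resp.\ Green) kernel on $B^+$ (resp.\ on the Weyl chamber) as $|W|\,\pi(x)\pi(y)\,P^W$ (resp.\ $N^W$) via \eqref{operator}, and then applying the Doob $h$-transform factor $\pi(y)/\pi(x)$ --- a valid and arguably more explanatory derivation. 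One small slip in the Newton-kernel paragraph: the relevant killed process is Brownian motion killed on $\partial\a^+$ (the Weyl chamber), not on $B^+$, as you in fact indicate when mentioning the reflection principle for the Green function on the Weyl cone.
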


\begin{acknowledgment}
P. Graczyk was partially supported by Labex CHL Lebesgue and Programme R\'egional DEFIMATHS. T. Luks is grateful to Universit\'e d'Angers and to Laurentian University for their hospitality. P. Sawyer was supported by Laurentian University. He is thankful to Universit\'e d'Angers and to Universit\"at Paderborn for their hospitality.
\end{acknowledgment}


\end{document}